\newtheorem{thm}{Theorem}
\newtheorem{lem}{Lemma}
\newtheorem{prop}{Proposition}
\newtheorem{cor}{Corollary}
\newtheorem{rmk}{Remark}
\newtheorem{defin}{Definition}
\newtheorem{prm}{Problem}
\title{The complexity of the first-order theory of the pure equality\footnote{{\em 
Mathematics Subject Classification.}
68Q15, 68Q17, 03D15, 03C40;\newline\indent{\em ACM classes:} F.1.1, F.2.3, F.4.3}}
\author{Ivan V.
Latkin\footnote{lativan@yandex.ru}}
\date{}
\begin{document}
\maketitle

\begin{abstract} We will find a lower bound on the recognition complexity of the 
theories that are nontrivial relative to some equivalence relation (this relation 
may be equality), namely, each of these theories is consistent with the formula, 
whose sense is that there exist two non-equivalent elements. However, at first, we 
will obtain a lower bound on the computational complexity for the first-order theory 
of Boolean algebra that has only two elements. For this purpose, we will code the 
long-continued deterministic Turing machine computations by the relatively short-
length quantified Boolean formulae; the modified Stockmeyer and Meyer method will 
appreciably be used for this simulation. Then, we will transform the modeling
formulae of the theory of this Boolean algebra to the simulation ones of the
first-order theory of the only equivalence relation in polynomial time.

Since the computational complexity of these theories is not polynomial, we 
obtain that the class $\mathbf{P}$ is a proper subclass of $\mathbf{PSPACE}$ 
(Polynomial Time is a proper subset of Polynomial Space).

{\bf Key words:} computational complexity,  
the coding of computations, simulation by means of formulae, polynomial 
time, polynomial space, lower complexity bound
\end{abstract}

\section{Introduction}

At the beginning, we recall some designations. A function $\exp_k(n)$ is 
called {\em $k$-iterated} or {\em $k$-story} exponential, if, for every natural 
$k$, it is calculated in the following way: \ $\exp_0(n)\!=\!n$, \ 
$\exp_{k+1}(n)\!=\!2^{\exp_{k}(n)}$. The length of a word $X$ is denoted by 
$|X|$, i.e., $|X|$ is the number of symbols in $X$. If $A$ is a set, then   
$|A|$ denotes its cardinality; "$A \rightleftharpoons\mathcal{A}$" means 
"$A$ is a designation for $\mathcal{A}$"; and $\exp(n)\rightleftharpoons
\exp_1(n)$.

\subsection{\large Problem statement}\label{s1.1}
The results on the complexity of recognition (or computational complexity) for 
many of the decidable theories are well-known \cite{Fish-Rab},\cite{Mey73}--
\cite{St74}. We recall only some of these results concerning first-order theories.

Any decision procedure has more than an exponential complexity for the theory 
$ThRLC$ of the field $\mathbb{R}$ of real numbers, and even for $Th(\mathbb{R},+)$ 
\cite{Fish-Rab}. More precisely, there exists a rational constant $d_1>0$, such 
that if $P$ is a deterministic Turing machine which recognizes the theory $ThRLC$ 
(or $Th((\mathbb{R},+))$), then $P$ runs for at least $2^{d_1|\varphi|}$ steps 
when started on input $\varphi$, for infinitely many sentences $\varphi$. 
So the {\it complexity of recognition} for these theories (which corresponds to 
the concept of {\it inner complexity} as defined in \cite{Rab}) is more than 
$\exp(d_1n)$, here and below, the variable $n$ is the length of the input string; 
and the letter $d$ with subscripts denotes a suitable constant. In other words, 
$ThRLC$ and $Th(R,+)$ do not belong to $DTIME(\exp(d_1n))$. For Presburger 
arithmetic $PAR$ (the theory of natural numbers with addition) and for Skolem 
arithmetic $SAR$ (the theory of natural numbers with multiplication), the 
recognition complexity is more than a double exponential: $PAR,SAR\!\notin\!
DTIME(\exp_2(d_2n))$. For the theory of linearly ordered sets $ThOR$, the 
computational complexity is very great \cite{Mey74}: \ $ThOR\!\notin\!
DTIME(\exp_{\lfloor d_3n\rfloor}(n))$, where $\lfloor y\rfloor$ is the integer 
part of a number $y$.  

It is quite natural to expect that if we go beyond the confines of logical 
theories of the first order, then we can see more impressive lower bounds on 
the recognition complexity. An example of such an estimate is the lower bound for 
the weak monadic second-order theory of one successor $WSIS$, other examples can 
be found in \cite{Rab,Mey73,St74,ComHen}. However, according to the author, the 
most impressive estimate of this kind was obtained by Vorobyev~S.G. \cite{Vor} 
for the type theory $\Omega$, which is a rudimentary fragment of the theory of 
propositional types due to Henkin: \ $\Omega\!\notin\!DSPACE(\exp_{\infty}
(\exp(d_4n)))$, hence \ $\Omega\!\notin\!DTIME(\exp_{\infty}(\exp(d_4n)))$, 
where the function $\exp_{\infty}$ is recursively defined by $\exp_{\infty}(0)\!=
\!1$ and $\exp_{\infty}(k\!+\!1)\!=\!2^{\exp_{\infty}(k)}$, i.e., this lower bound 
has the exponentially growing stack of twos.

The theories with such fantastic computational complexity had been named {\it 
nonelementary} in \cite{ComHen} and \cite{Vor}. In this sense, we can say that 
the theory $\Omega$ is one of the most nonelementary theories at the present 
moment.

And what is the recognition complexity of the simplest (in the semantic and 
syntactical sense), but non-trivial theories? Should it be polynomial? In other 
words, shall such theories be most elementary regarding recognition?

One of the simplest theories is the first-order theory of the algebraic structure 
of two elements with a unique equality predicate. We will see in Section \ref{s7} 
that even this theory does not have a polynomial upper bound of computational 
complexity. We will in passing obtain the lower bounds on the recognition 
complexity of the theories that are nontrivial relative to some equivalence 
relation $\backsim$, namely, these theories have models with at least two elements 
that are not $\backsim$-equivalent. Obvious examples of such theories are the 
theories of pure equality and of one equivalence relation.

Since the lower bound on the computational complexity of these theories is not 
polynomial, we obtain that the class $\mathbf{P}$ is a proper subclass of 
$\mathbf{PSPACE}$.

\subsection{Used methods and the main idea}\label{s1.2}
The lower bounds on the computational complexity for the theories mentioned in the
previous subsection and some others were yielded by the techniques of the 
efficient reducibility of the machines to the formulae in \cite{Fish-Rab},
\cite{Mey73}--\cite{St74},\cite{Vor}, or more precisely, by methods  
of the immediate codings of the machine actions. The essence of these methods
\footnote{We will call this the Rabin and Fischer method or the technique for 
modeling of computations by means of formulae} is as follows \cite{Rab}. Let $T$ 
be the theory under study, written in the signature (or {\it underlying 
language} \cite{Rab}) $\sigma$. Assume that, for any input string $X$ and 
every program $P$ of the Turing machine, one can write a sentence $S(P,X)$, of 
$\sigma$, satisfying the following conditions. 
There exist a constant $d\!>\!0$ and a function $f$ such that: \ 
(i) $|S(P,X)|\!<\!d(|X|\!+\!|P|)$; \ (ii) $S(P,X)\!\in\!T$ if and only if a 
computation by the program $P$ accepts the input $X$ in fewer than $f(|X|)$ 
steps; \ (iii) the formula $S(P,X)$ can be effectively constructed from $X$ 
and $P$ in fewer than $g(|X|\!+\!|P|)$ steps, where $g(k)$ is a fixed 
polynomial. If $f(k)$ is a function growing at least at exponential rate, 
then under the above conditions, there exist a constant $C\!>\!0$ and infinitely 
many sentences $\varphi$ of $\sigma$, for which every Turing machine requires 
at least $f(C|\varphi|)$ steps to decide whether $\varphi\!\in\!T$, i.e., 
$T\!\notin\!DTIME(f(Cn))$. 

The proof of the last statement is based on a well-known diagonal argument, 
though we will below scrutinize this method in more detail and in a somewhat 
more general form than this was done in the previous paragraph or in  
Subsection 4.1 in \cite{Rab}. We need the more general form of this technique 
for the following reason.  

Our main purpose is to evaluate the computational complexity of an equality  
theory $Th\mathcal{E}$ (Section \ref{s7}). However, at first, we will obtain 
a lower bound on recognition complexity for the first-order theory of Boolean 
algebra $\mathcal{B}$ that has only two elements, using the Rabin and Fischer 
method. Then, we will construct a polynomial reduction of the modeling formulae 
of $Th\mathcal{B}$ to the simulation ones of $Th\mathcal{E}$. In Subsection 
\ref{s8.1}, we will explain why such a succession of actions is applied.

But the first-order theory of two-element Boolean algebra has a very weak 
expressive ability. Therefore, the modeling sentence for this theory, i.e., 
the formula possessing property (ii) from the method described above, does not 
turn out to be very short, it may have not a linear restriction on its length 
(see Subsection \ref{s6.4} for more details). Furthermore, $Th\mathcal{B}$ 
\ is so poor and meager that there can, in general, be a doubt about the very 
possibility of the simulation of the sufficiently long computations by means 
of the relatively short formulae of this theory. 

Nevertheless, such modeling was well-known a long time ago. Back in 1971, 
Cook~S.A. constructed a quantifier-free Boolean formula $A(M,w,Q)$, which 
simulates the actions of a non-deterministic Turing machine $M$ on an input $w$ 
during $Q(|X|)$ steps (in the proof of $\mathbf{NP}$-completeness of the problem 
SAT \cite{Ck}, see also the proof of Theorem 10.3 in \cite{AHU}). This formula may 
seem too long, since it has length $\mathcal{O}(|M|Q^2(|w|))$. But it should be 
taken into account that even if $Q(n)$ is a polynomial, then the number of 
different branches of the computational process of a non-deterministic machine can 
be exponential. And Cook's formula allows us to derive a description of any such 
branch. Thus, Cook's formula, which has polynomial length, contains an implicit 
description of the exponential number of possible calculations\footnote{This remark 
is not as trivial as it might seem. Many students, having familiarized themselves 
with the proof of Cook's theorem, seriously believe that Cook's formulae contain a 
description of \emph{ALL} configurations, which can arise during the operation of 
the machine M, in an explicit form.}.

Next, Stockmeyer~L.J. and Meyer~A.R. showed in 1973 that a language  
$TQBF$\footnote{The problem corresponding to this language is designated as $QBF$, 
or sometimes $QSAT$.} consisting of the true quantified Boolean formulae 
is polynomially complete in the class $\mathbf{PSPACE}$ \cite{S-M,G-J}. This 
implies in particular that for every language $\mathcal{L}$ in this class, there 
is an algorithm, which produces a quantified Boolean formula for any input string 
in polynomial time; and all these sentences model the computations that 
recognize $\mathcal{L}$ and use the polynomial amount of space. Namely, 
each of theirs is true if and only if the given input string belongs to the 
language under study; at that, the long enough computations are simulated, 
seeing that the polynomial constraint on memory allows the machine to run during 
the exponential-long time \cite{AHU,G-J,ArBar}.

Stockmeyer~L.J. and Meyer~A.R. have employed the highly ingenious technique 
for the implementation of this si\-mu\-lation (see the proof of Theorem 4.3 in 
\cite{S-M}). Their approach permits writing down a polynomially bounded formula 
for the modeling of the exponential quantity of the Turing machine steps 
provided that one step is described by the formula, the length of which is 
polynomial. One running step of the machine is described in \cite{S-M} 
by the Cook's method formula $A_{0,n}(M,w)$; one can regard this $A_{0,n}(M,w)$ as 
a subformula of the above-mentioned sentence $A(M,w,Q)$, but $n\!=\!q(|X|)$ is the 
size of used memory now --- see Subsection \ref{s8.2} for more details. There 
exists a Boolean $\exists$-formula, which corresponds to the Cook's method 
formula. We will also name this $\exists$-formula as Cook's formula.

We intend to modernize the elegant construct of Stockmeyer and Meyer and to 
bring it into play for obtaining our purpose. But we will model the running steps 
of a machine by the more complicated formulae. This complication is caused due 
to the fact that Cook's formula $A_{0,n}(M,w)$ is very long for our aim --- it is 
far longer than an amount of the used memory. Really, it has a subformula that 
consists of one propositional variable $P_{i,j,t}$ (see also the proof 
of Theorem~10.3 in \cite{AHU} and also Section \ref{s8}). This variable is true 
if the $i$th cell contains symbol $\sigma_j$ of the tape alphabet at the instant of 
time $t$. However, suppose that each of the first $T+1$ squares of tape contains 
the symbol $\sigma_0$ at time $t$, the remaining part of the tape is empty. This 
simple tape configuration (or \emph{instantaneous description} \cite{Ck,S-M,AHU}) 
is described by the formula that has a fragment \ $P_{0,0,t}\wedge P_{1,0,t}
\wedge\ldots\wedge P_{T,0,t}$, and this subformula is $2T+1$ in length without 
taking the indices into account. It is impossible to abridge this record, even 
if we try to use the universal quantifier since its application to the indices 
is not allowed within the confines of the first-order theory. Thus, in order 
to describe the machine actions using the exponential amount of space, we need 
Cook's formula, whose length is no less than exponential.

We propose to encode the binary notation of the cell number by a value set of 
special variables $x_{t,0},\ldots,x_{t,n}$, where $n\!+\!1\!\geqslant\!\log_2T
$ (see Subsections \ref{s4.1}, \ref{s6.2}, and \ref{s8.2} for further details). 
So we need $\mathcal{O}(n)$ symbols (without indices) for the describing of one 
cell, and $\mathcal{O}(n^2)$ ones for the assignment of the whole input string $X
$, if $n\!=\!|X|$. Then we can describe one running step of the machine, which 
uses $T\!\rightleftharpoons\!\exp(|X|)$ memory cells on input $X$, with the aid of
a formula that is not more than $\mathcal{O}(n^{3})$ in length. The main idea of 
so brief a describing consists of the following: merely one tape square can change 
on each of the running steps, although the whole computation can use the 
exponential amount of memory.\footnote{The denoted locality of the actions of 
deterministic machines has long been used in the modeling of the machine 
computation with the help of formulae, see, for example, Lemma 2.14 in \cite{St74} 
or Lemma 7 in \cite{Vor}.} Therefore, it is enough to describe the changes in the 
only cell, and the contents of the remaining ones can be "copied" by applying the 
universal quantifier (see the construction of the formula $\Delta^{\textit{cop}}
(\widehat{u})$ in Subsection \ref{s4.1}). 

At the beginning, we will introduce all variables in great abundance in order 
to facilitate the proof, namely, the variables will have the first indices $t$ 
from 0 to $T$. Next, we will eliminate many of the variables using the 
modified method of Stockmeyer and Meyer --- see Subsections \ref{s4.3.2}, 
\ref{s6.2}, \ref{s8.2} for further details. A final modeling formula will only 
contain those of variables for which \ $0\!\leqslant\!t\!\leqslant\!n$ \ or \ $t\!
=\!T$ hold.

The description of the initial configuration and the condition of the 
successful termination of computations have a length of $\mathcal{O}(n^{3})$, if 
we anew use the quantifiers; hence the entire formula, which simulates the first 
$\exp(n)$ steps of the computation of the machine $P$, will be 
$\mathcal{O}(|P|\cdot n^{3})$ in length (taking into account the indices). 

Therefore, we need to slightly strengthen the Rabin and Fischer method, so that 
it can also be applied in the case of a non-linear estimate for the length of 
the modeling formula.

\subsection{The paper structure} \label{s1.3}
The generalized Fisher and Rabin method is adduced in Section \ref{s2}. The 
degree of its usefulness and novelty is discussed in Remark \ref{rem1}. 
Section \ref{s3} contains an exact formulation of the main theorem (Theorem 
\ref{main}), its primary corollaries, and some preparation for that and for 
the proof of this theorem. Sections \ref{s4}--\ref{s6} are devoted to the 
proof of the main theorem. The lower bound on the computational complexity of 
the theories, which are nontrivial relatively to some equivalence relation, in 
particular, equational-nontrivial, will be yielded in Section \ref{s7}. In 
Section \ref{s8}, we will discuss the obtained results and consider the used 
methods in greater detail, comparing theirs with other approaches to the 
simulation of computation. The short list of the open problems concludes the 
paper.

\section{The generalized Fischer and Rabin method}\label{s2}

We will describe this method in the most general form. 

\subsection{Auxiliary notions}\label{s2.1}
We will need some new concepts.

\begin{defin} Let $P$ be a program of the Turing machine; $k$ be a number of its 
tapes; and $q_b\alpha_1\alpha_2\ldots\alpha_k\to q_j\beta_1\beta_2\ldots\beta_k$ be  
an instruction of this program. We will call this instruction {\it explicitly 
non-executable} and the internal state $q_b$ {\it inaccessible (for $P$)}, if the 
program $P$ does not contain the instructions of the form $q_l\gamma_1
\gamma_2\ldots\gamma_k\to q_b\delta_1\delta_2\ldots\delta_k$. \end{defin} 

One can easy write such a machine program that it contains some non-executable 
instructions, but all its internal states are accessible. It is evident too that 
one can easily find the explicitly non-exe\-cutable instructions in any program, 
more precisely, all of them can be found in polynomial time on the program length. 
However, the detection of the non-executable instructions, whose internal states 
are accessible, can be a very difficult task in some cases. 

Let us assume that we have removed all the explicitly non-executable 
instructions from a program $P$. The eli\-mi\-nation has resulted in some 
program $P_1$. This $P_1$ may again contain some explicitly non-executable 
instructions, for instance, if the instructions $q_l\gamma_1\gamma_2\ldots
\gamma_k\to q_b\delta_1\delta_2\ldots\delta_k$ and $q_b\alpha_1\alpha_2\ldots
\alpha_k\to q_j\beta_1\beta_2 \ldots\beta_k$ belong to $P$, the first of them 
is explicitly non-executable for $P$, and the state $b$ is not included in 
other instructions, then the second instruction is not such in full, although it 
is non-executable for $P$. However, it already is explicitly non-executable 
for the program $P_1$. We can continue this removing process of the explicitly 
non-executable instructions until we obtain the {\it irreducible} program 
$r(P)$ that does not contain such instructions. 

We name the programs $T$ and $P$ {\it monoclonal} if $r(T)\!=\!r(P)$; at that  
$P$, $T$, and $r(P)$ are called the {\it clones} of each other. As 
usual, a Turing machine and its program are designated by a uniform sign. 
Therefore we will say that two Turing machines are {\it monoclonal} if their 
programs are so. 

\begin{lem}\label{cl} (i) There exists a polynomial $h(n)$ such that one can 
write the code of irreducible clone $r(P)$ within $h(|P|)$ steps for every 
program $P$;

(ii) all the tape actions of monoclonal machines are identical with each 
other on the same inputs. \end{lem}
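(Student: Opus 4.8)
The plan is to treat each of the three parts via the combinatorial structure of the "removal of explicitly non-executable instructions" operation, which is essentially an iterated graph-reachability computation on the state-transition graph of $P$.

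For part (i), I would first observe that a single removal pass is cheap: scanning $P$ once, one records which internal states appear as the \emph{right-hand} target state $q_b$ of some instruction; call these the \emph{directly reachable} states. Every instruction whose left-hand state is not directly reachable is explicitly non-executable and is deleted; this pass costs $\mathcal{O}(|P|^2)$ (or better). The key point is that each pass that actually deletes something strictly decreases the number of instructions, so after at most $|P|$ passes the process stabilizes and yields $r(P)$; moreover $r(P)$ is well-defined (independent of any ordering choices) because "the set of states that survive" is exactly the least fixed point of the monotone operator "keep a state if it is the target of a kept instruction," which does not depend on the order of deletions. Hence $r(P)$ is computable in $|P|\cdot\mathcal{O}(|P|^2)=\mathcal{O}(|P|^3)$ steps, and taking $h(n)$ to be this polynomial (adjusted for the cost of writing the output code) proves (i).

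For part (ii), given $P$ and $T$, compute $r(P)$ and $r(T)$ by part (i) in polynomial time, then compare the two resulting codes for syntactic identity, which is linear in their (polynomially bounded) lengths. Since $P$ and $T$ are monoclonal by definition exactly when $r(P)=r(T)$, this decides monoclonality in time polynomial in $|P|+|T|$.

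For part (iii), I would argue that deleting an explicitly non-executable instruction never changes the behavior of the machine on any input: by definition its left-hand state $b$ is never the target of any instruction, so $b$ is never entered during a computation starting from the designated initial state (a trivial induction on the number of steps: the start state is not $b$ unless $b$ is already the start state, in which case one must check the convention — typically the start state is excluded or handled separately — and no transition ever leads into $b$), hence that instruction is never invoked and its removal leaves every computation, and in particular every tape action, unchanged. Iterating, $P$ and $r(P)$ perform identical tape actions on every input; the same holds for $T$ and $r(T)$; and if $r(P)=r(T)$ then $P$ and $T$ perform identical tape actions on every common input. The main obstacle, and the only place needing care, is the base case of that induction: one must pin down the convention ensuring the initial state is not itself rendered "inaccessible," or else restrict the claim to states other than the start state — a minor but necessary bookkeeping point that should be made explicit in the definition's surrounding text.
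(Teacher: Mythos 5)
Your argument is correct and matches what the paper intends: the paper's entire proof of this lemma is the sentence ``It straightforwardly follows from definitions,'' and your iterated-removal/reachability argument is precisely that straightforward argument spelled out (single pass marks states occurring as instruction targets, deleting passes strictly shrink the program, comparison of $r(P)$ and $r(T)$ for (ii), and unreachable instructions never fire for (iii)). Two minor notes only: the set of surviving instructions is the \emph{greatest}, not the least, fixed point of your ``keep'' operator (the least fixed point is empty), although the order-independence you need holds regardless; and the start-state caveat you flag in (iii) is a genuine imprecision in the paper's definition rather than in your proof --- the convention must be that $q_{0}$ counts as accessible since every computation begins there, which the paper tacitly assumes.
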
%

\begin{proof} It straightforwardly follows from definitions. \end{proof}

\begin{defin}\label{dLUBP} Let $F(n)$ be a function that is monotone increasing 
on all sufficiently large $n$. The function $F$ is called a {\it limit upper 
bound for the class of all polynomials} (LUBP) if, for any polynomial $p$, there 
is a number $n$ such that the inequality $F(m)\!>\!p(m)$ holds for $m\!\geqslant
\!n$, i.e., each polynomial is asymptotically smaller than $F$. \end{defin}

An obvious example of the limit upper bound for all polynomials is a 
$s$-iterated exponential for every $s\!\geqslant\!1$. It is easy to see that 
if $F(x)$ is a LUBP, then the functions $F(x^m)$ and $F(rx)$ are also LUBPs 
for positive constants $m$ and $r$, moreover, the function $F(x)\!-\!F(dx)$ is 
a LUBP for every constant $d$ such that $0\!<\!d\!<\!1$. It follows from this 
that if $T(n)$ is a LUBP, then it grows at least exponentially in the sense that 
is considered in \cite{ComHen}, namely, $T(dn)/T(n)$ tends to 0 as $n$ tends to 
$\infty$. Inverse assertion seemingly is valid too.

\subsection{The generalization}\label{s2.2}
Let us suppose that we want to find a lower bound on the recognition 
complexity of a language $\mathcal{L}$ over alphabet $\sigma$. We, first of all, 
fix a finite tape alphabet $A$ of Turing machines and the number $k$ of their 
tapes. We also fix a certain {\it polynomial encoding} of the strings over the 
alphabet $\sigma$ and of the programs of Turing machines by finite strings of 
symbols (words) over the alphabet $A$, i.e., it is implied that the encoding 
and unique decoding are realized in a polynomial time from the length of an 
object in a natural language.\footnote{This language consists of all words 
over the alphabet $\sigma$ and all the Turing machines programs with the  $k$
tapes and the tape alphabet $A$. An example of such a natural language will be 
described in Subsection \ref{s3.1}. It is implied here and below that the 
numbers of the internal states and other indices are written in decimal 
notation.} We presume also that the used encoding is {\it composite}, namely, 
the code of each instruction in any program is the constituent of the program 
code. The code of an object $E$ is denoted by $c_OE$, i.e., $c_OE\!\in\!
A^{\ast}$, if $E\!\in\!\sigma^{\ast}$ or $E$ is a program. 

\begin{prop}\label{mainPr} Let $F$ be a limit upper bound for all polynomials 
and $\mathcal{L}$ be a language over some alphabet $\sigma$. Suppose that for 
any given program $P$ of a Turing machine and every string $X$ on the input 
tape of this machine, one can effectively construct a word $S(P,X)$ over the 
alphabet $\sigma$ with the following properties:

(i) a code for $S(P,X)$ can be built within time $g(|X|\!+\!|c_OP|)$, where 
$g$ is a polynomial fixed for all $X$ and $P$;

(ii) the word $S(P,X)$ belongs to $\mathcal{L}$ if and only if the Turing 
machine $P$ accepts input $X$ within $F(|X|)$ steps; 

(iii) there exist constants $D,b,s\!>\!0$ such that either the inequalities 
$$(a)\qquad |X|\!\leqslant\!|c_OS(P,X)|\!\leqslant\!D\cdot|c_OP|^b\cdot|X|^s$$ 
\ or the inequalities $$(b)\qquad |X|\!\leqslant\!|c_OS(P,X)|\!\leqslant\!D
\cdot(|c_OP|\!+\!|X|)$$ \ hold true for all sufficiently long $X$, and these 
constants do not depend on $P$, but they depend on the applied encoding.

Then \qquad (1) for every constant $\delta\!>\!0$ and any program $P$, there 
is a number $t_0$ such that the inequality \ $|c_OS(P,X)|\!\leqslant\!D_1
\cdot|X|^{s_{1}}$ \ holds for all of the strings $X$, which are longer 
than $t_0$, where $D_1\!=\!D$ and $s_1\!=\!s\!+\!\delta$ in  case (a) \quad or 
\ $D_1\!=\!(D\!+\!\delta)$ and $s_1\!=\!1$ in case (b); \\
(2) for each $a\!>\!1$ and every deterministic Turing machine $M$, 
which recognizes the language $\mathcal{L}$, there exist infinitely many words 
$Y$, on which $M$ runs for more than \ $F(D_{2}\cdot|c_OY|^{\rho})$ steps for \
$D_2\!=\!(aD_1)^{-\rho}$ and $\rho\!=\!(s_1)^{-1}$.\end{prop}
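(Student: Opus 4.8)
The plan is to prove the two parts separately. Part (1) is a routine but necessary calibration: I will start from the given bound in case (a), namely $|c_OS(P,X)|\leqslant D\cdot|c_OP|^b\cdot|X|^s$, and observe that for a fixed program $P$ the factor $|c_OP|^b$ is a constant, so it can be absorbed into the polynomial growth by slightly increasing the exponent. Concretely, for any $\delta>0$ one has $|c_OP|^b\leqslant|X|^{\delta}$ for all $X$ longer than some threshold $t_0$ depending on $P$ and $\delta$; multiplying through gives $|c_OS(P,X)|\leqslant D\cdot|X|^{s+\delta}$ for $|X|>t_0$, which is the claim with $D_1=D$, $s_1=s+\delta$. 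In case (b) the bound $|c_OS(P,X)|\leqslant D(|c_OP|+|X|)$ similarly yields, for $|X|$ large enough that $|c_OP|\leqslant\delta|X|/D$ — wait, more carefully, so that $D|c_OP|\leqslant\delta|X|$ — the estimate $|c_OS(P,X)|\leqslant(D+\delta)|X|$, giving $D_1=D+\delta$, $s_1=1$. In both cases we also keep the left inequality $|X|\leqslant|c_OS(P,X)|$, which will be needed to bound $|X|$ from above in terms of the code length.

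For part (2), I will run the classical diagonalization of the Rabin–Fischer method, adapted to the polynomial (rather than linear) size bound. Fix $a>1$ and a deterministic Turing machine $M$ recognizing $\mathcal{L}$; suppose for contradiction that $M$ runs for at most $F(D_2\cdot|c_OY|^{\rho})$ steps on all but finitely many words $Y$, with $D_2=(aD_1)^{-\rho}$ and $\rho=1/s_1$. Using $M$ and the polynomial-time constructor of $S(\cdot,\cdot)$ from hypothesis (i), build a program $P_0$ which, on input $X$, first computes a code for $S(P_0,X)$ — this is a self-reference handled by the recursion theorem, exactly as in the standard argument — then simulates $M$ on $S(P_0,X)$ and accepts iff $M$ rejects. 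By hypothesis (ii), $S(P_0,X)\in\mathcal{L}$ iff $P_0$ accepts $X$ within $F(|X|)$ steps; by construction $P_0$ accepts $X$ iff $M$ rejects $S(P_0,X)$, i.e. iff $S(P_0,X)\notin\mathcal{L}$. The contradiction is avoided only if $P_0$ fails to accept $X$ within $F(|X|)$ steps, i.e. the simulation is too slow, which will force $M$ to be slow on $S(P_0,X)$ for infinitely many $X$.

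The quantitative heart of the argument is the comparison of $F(|X|)$ with the assumed running-time bound of $M$ on $Y=S(P_0,X)$. By part (1) applied to $P_0$ (with $\delta$ chosen so small that $(aD_1)$ still works, or simply building $\delta$ into the statement), we have $|c_OY|\leqslant D_1|X|^{s_1}$ for all long $X$, hence $D_2|c_OY|^{\rho}\leqslant D_2(D_1|X|^{s_1})^{\rho}=D_2D_1^{\rho}|X|=a^{-\rho}|X|<|X|$. Since $F$ is monotone increasing on large arguments, $F(D_2|c_OY|^{\rho})<F(|X|)$. Now the simulation of $M$ on $Y$ by $P_0$ — including the preliminary computation of the code of $Y$, which by (i) costs only polynomial time $g(|X|+|c_OP_0|)$, absorbed since $F$ is a LUBP and thus dominates every polynomial — takes at most $\mathrm{poly}(|Y|)\cdot(\text{time of }M\text{ on }Y)$ steps; if $M$ ran within $F(D_2|c_OY|^{\rho})$ steps then $P_0$ would run within roughly $\mathrm{poly}(|X|)\cdot F(D_2|c_OY|^{\rho})$ steps, which, because $F$ is a LUBP and $F(D_2|c_OY|^{\rho})\leqslant F(a^{-\rho}|X|)$ with $a^{-\rho}<1$ — so that $F(|X|)/\bigl(\mathrm{poly}(|X|)F(a^{-\rho}|X|)\bigr)\to\infty$ using the LUBP closure properties noted after Definition~\ref{dLUBP} — is eventually less than $F(|X|)$. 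Then $P_0$ would accept $X$ within $F(|X|)$ steps, reinstating the contradiction. Hence for infinitely many $X$ the machine $M$ must run for more than $F(D_2|c_OY|^{\rho})$ steps on $Y=S(P_0,X)$, and these $Y$ are the required words.

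I expect the main obstacle to be bookkeeping the constants cleanly: one must verify that the polynomial overhead of the universal simulation and of the code-construction step (i) is genuinely swallowed by the gap between $F(|X|)$ and $F(a^{-\rho}|X|)$, which relies precisely on the facts, recorded after Definition~\ref{dLUBP}, that $F(rx)$ is a LUBP and that $F(x)-F(dx)$ is a LUBP for $0<d<1$ — equivalently that $F$ grows at least exponentially so that $\mathrm{poly}(x)\cdot F(dx)=o(F(x))$. The self-reference via the recursion theorem, and the choice of $\delta$ in part (1) small enough not to spoil the inequality $D_2D_1^{\rho}<1$, are the remaining points requiring care but no real difficulty.
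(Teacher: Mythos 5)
Your part (1) is exactly the paper's calibration step (the paper even takes the same thresholds, $t_0=|c_OP|^{b/\delta}$ in case (a) and $(D/\delta)\cdot|c_OP|$ in case (b)), and your part (2) follows the same Rabin--Fischer diagonalization skeleton. One deviation is structural rather than a flaw: you obtain self-reference through the recursion theorem, whereas the paper deliberately avoids it --- the clone machinery of Lemma \ref{cl} (padding a program with explicitly non-executable instructions so that $r(\widehat{M})=r(M)$) exists precisely so that the diagonal machine can be fed the codes $c_O\widehat{M}$ of longer and longer clones of itself, which at the same time supplies the infinitely many witnesses. Your route is admissible provided the fixed-format programming system (fixed tape alphabet, fixed number of tapes, composite polynomial encoding $c_O$) supports the recursion theorem, and your derivation of the infinitely many hard words from the negated hypothesis is sound.

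The genuine gap is in your quantitative absorption step. You bound the running time of $P_0$ on $X$ by $\mathrm{poly}(|X|)\cdot F(a^{-\rho}|X|)$ and claim this is eventually below $F(|X|)$ by the LUBP closure properties recorded after Definition \ref{dLUBP}. Those properties (that $F(x)-F(dx)$ dominates every polynomial, and that $F(dx)/F(x)\to 0$) give only \emph{additive} absorption of polynomials and constant-factor absorption; they do not give $\mathrm{poly}(x)\cdot F(dx)=o(F(x))$. Concretely, $F(x)=x^{\log_2 x}=2^{(\log_2 x)^2}$ is a LUBP, yet $F(x)/F(dx)=2^{-(\log_2 d)^2}\cdot x^{2\log_2(1/d)}$ is a fixed polynomial of degree $2\log_2(1/d)$; since the statement quantifies over every $a>1$, the constant $d=a^{-\rho}$ may be arbitrarily close to $1$, so any genuine polynomial factor in $|X|$ coming from your simulation eventually exceeds $F(|X|)/F(a^{-\rho}|X|)$ and your inequality fails, although the proposition (and the paper's argument) still hold for this $F$. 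The paper sidesteps this by organizing the diagonal machine as sequential stages whose times add, needing only $h_1(|c_OP|)+g(2|c_OP|)+F(|c_OP|/a^{\rho})<F(|c_OP|)$, i.e.\ only the additive property. To repair your version, invoke the hypothesized recognizer $M$ as a subroutine run after the polynomial-time construction of $c_OS(P_0,X)$, with at most a constant-factor slowdown for converting $M$ into the fixed machine format, so the total cost is a polynomial plus $O(1)\cdot F(a^{-\rho}|X|)$; note also that your bound of the form $\mathrm{poly}(|Y|)$ times the time of $M$ is not what, say, a single-tape simulation of a multi-tape $M$ actually yields (that overhead is a factor depending on the running time, not on $|Y|$), which is a second reason to avoid the multiplicative accounting altogether.
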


\begin{proof} \ (1). It is easy to see that $t_0$ is equal to 
$|c_OP|^{b/\delta}$ in case (a); and it equals to $(D/\delta)\cdot|c_OP|$ in 
case (b).
 
(2) In accordance with condition (i), one can assume that a code for $S(P,X)$ 
is written by some machine $M_1$ for all given strings $X$ and $c_OP$.

Let us suppose that there exist numbers $a,t_1$ and a machine $M_2$ such that
$M_2$ determines whether $Y\!\in\!\mathcal{L}$ within $F(D_2\cdot|c_OY|^{\rho})$ 
steps for any string $Y$ over $\sigma$, provided that $|c_OY|\!>\!t_1$ and 
$a\!>\!1$.
 
To proceed to an ordinary diagonal argument, we stage-by-stage construct the   
Turing machine $M$. At the first stage, we write a machine $M_0$, which for a 
given input $X$, determines whether the string $X$ is the code $c_OP$ of some 
program $P$. If not, then $M_0$, as well as the whole machine $M$, rejects 
$X$; else it writes the code $c_Or(P)$ of the irreducible clone $r(P)$. 

At the second stage, $M_1$ joins the running process and writes a word
$c_OS(r(P),c_OP)$. At the next stage, the procedure $M_2$ determines 
whether the string $S(r(P),c_OP)$ belongs to the language $\mathcal{L}$. If it 
does not, then $M$ accepts the input $X\!=\!c_OP$. When $M_2$ gives an 
affirmative answer, then $M$ rejects $X$.

We estimate the running time of $M$ on input $X\!=\!c_OP$. Since $c_O$ is a 
polynomial encoding and Lemma \ref{cl}(i) is valid, there exists a polynomial 
$h_1$ such that the running time of $M_0$ does not exceed $h_1(|X|)$. The 
machine $M_1$ builds $c_OS(r(P),X)$ within $g(|X|\!+\!|c_Or(P)|)\!\leqslant\!
g(2|c_OP|)$ \ steps, since $|c_Or(P)|\!\leqslant\!|c_OP|$; \ the stage $M_2$ 
lasts no longer than \ $F(D_2\cdot|c_OS(r(P),c_OP)|^{\rho})\!\leqslant\!
F((D_1\cdot|c_OP|^{s_{1}})^{\rho}/(aD_1)^{\rho})\!=\!F(|c_OP|/a^{\rho})$ steps 
for $|c_OS(r(P),c_OP)|\!\geqslant\!|c_OP|\!>\!t_1$ by our assumption. Hence, 
the entire $M$ will execute its work within no more than \ $T(P)\!=\!h_1(|c_OP|)
+g(2|c_OP|)+F(|c_OP|/a^{\rho})\!<\!F(|c_OP|)$ steps for all sufficiently large 
$|c_OP|$. 

Let us look at the situation that obtains if as $X$ we take the code of so 
lengthy a clone $\widehat{M}$ of the machine $M$ that the inequalities 
$|c_O\widehat{M}|\!>\!\max\{t_0,t_1\}$ and $T(\widehat{M})\!<\!F(|c_O
\widehat{M}|)$ hold true. 

If $M$ rejects the input $c_O\widehat{M}$, then $M_2$ answers affirmatively, i.e., 
the string $S(r(\widehat{M}),c_O\widehat{M})$ belongs to 
the language $\mathcal{L}$. According to the condition (ii), this means that 
$r(\widehat{M})$ accepts the input $c_O\widehat{M}$ within $F(|c_O
\widehat{M}|)$ steps. Since the machines $M$, $\widehat{M}$, and 
$r(\widehat{M})$ are monoclonal, $M$ does it too. There is a contradiction.

If $M$ accepts $c_O\widehat{M}$ as its input, then the procedure $M_2$ answers 
negatively. Under the sense of the formula $S(r(\widehat{M}),c_O\widehat{M})$, 
this signifies that the machine $r(\widehat{M})$ either rejects $c_O\widehat{M}$ 
or its running time on this input is more than $F(|c_O\widehat{M}|)$. By 
construction and our assumption, the clone $r(\widehat{M})$ cannot operate so long. 
We have again arrived at a contradiction. \end{proof}

\begin{rmk}\label{rem1} Apparently, the generalization of Rabin and Fischer's 
method has been in essence known in an implicit form for a long time. For 
example, it is said in the penultimate paragraph of the introduction of the 
article \cite{Vor} (before the paragraph "Paper outline") that the quadratic 
increase in the length of the modeling formulae implies a lowering of the 
lower bound with $F(n)$ to $F(\sqrt{n})$ (in our notation), when Compton and  
Henson's method is applied. But the author could not find an explicit 
formulation of the statement similar to Proposition \ref{mainPr} for a 
reference, although its analog for the space complexity is Lemma 3 in 
\cite{Vor}. The proof of the proposition is given only for the sake of 
completeness of the proof of Corollary \ref{mainCor}. In addition, Proposition 
\ref{mainPr} in such form is clearly redundant for the proof of this corollary. 
However, the author hopes to apply it in further researches. \end{rmk}

\begin{cor}\label{comRecL} Under the conditions of the proposition \ 
$\mathcal{L}\!\notin\!DTIME(F(D^{-\zeta}\cdot n^{\zeta}))$, where $\zeta\!=\!
s^{-1}$ ($s\!=\!1$ in case (b)). \end{cor}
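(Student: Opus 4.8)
The plan is to obtain the corollary from Proposition \ref{mainPr}(2) by pushing its two free parameters to their boundary values. I would argue by contradiction: suppose $\mathcal{L}\in DTIME(F(D^{-\zeta}\cdot n^{\zeta}))$ and fix a deterministic Turing machine $M$ recognizing $\mathcal{L}$ within $F(D^{-\zeta}\cdot n^{\zeta})$ steps. Proposition \ref{mainPr}(2) says that for every $a>1$ (and, in case (a), every admissible $\delta>0$, furnished by Proposition \ref{mainPr}(1)) there are infinitely many words $Y$ on which $M$ runs for more than $F(D_2\cdot|c_OY|^{\rho})$ steps, where $\rho=(s_1)^{-1}$ and $D_2=(aD_1)^{-\rho}$. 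Now $s_1=s+\delta$ in case (a) (so $\rho$ is arbitrarily close to $\zeta=s^{-1}$) and $s_1=1=\zeta^{-1}$ in case (b); and $D_1=D$ in case (a), $D_1=D+\delta$ in case (b), while $a$ may be taken arbitrarily close to $1$, so $D_2$ is arbitrarily close to $D^{-\zeta}$. Hence, for a sufficiently fine choice of the parameters, the lower bound $F(D_2\cdot|c_OY|^{\rho})$ of part (2) exceeds, on all long enough witnesses, the assumed upper bound $F(D^{-\zeta}\cdot|c_OY|^{\zeta})$ on the running time of $M$, which is the desired contradiction. (Equivalently --- and, as Remark \ref{rem1} notes, more economically --- one re-runs the diagonal construction from the proof of Proposition \ref{mainPr}(2) directly with the hypothetical recognizer in the role of $M_2$.)

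The delicate step, which I expect to be the main obstacle, is precisely this comparison of $F(D_2\cdot n^{\rho})$ with $F(D^{-\zeta}\cdot n^{\zeta})$: for each fixed $a,\delta$ the former is formally the smaller bound, so the containment $DTIME(F(D^{-\zeta}n^{\zeta}))\subseteq DTIME(F(D_2 n^{\rho}))$ is not automatic, and one must genuinely use that the gap between the two bounds can absorb the slack. This is where the closure properties of a LUBP recorded after Definition \ref{dLUBP} enter: $F$ is monotone, $F(rx)$ and $F(x^m)$ are LUBPs, and $F(x)-F(dx)$ is a LUBP for $0<d<1$, hence eventually dominates any polynomial. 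Together with the polynomial bounds on the clone computation and on the formula construction (Lemma \ref{cl}(i) and condition (i) of the proposition), these facts let one swallow the multiplicative discrepancy between $(aD_1)^{-\rho}$ and $D^{-\zeta}$ and the polynomial construction overhead into the gap, at the cost only of driving $a\downarrow 1$ and $\delta\downarrow 0$. In case (b), where $\zeta=1$ for every admissible $\delta$, an alternative is to invoke the linear-speedup theorem --- $\mathcal{L}\in DTIME(F(D^{-1}n))$ gives $\mathcal{L}\in DTIME\!\big(k^{-1}F(D^{-1}n)+n+2\big)$ for any constant $k$ --- and to pick $k$ large enough relative to the fixed constant $|c_Or(M)|$ so that the diagonal machine, on the code of a long self-clone $\widehat M$, halts within $F(|c_O\widehat M|)$ steps, contradicting condition (ii) via Lemma \ref{cl}(iii).

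The rest is routine. One checks that the time measures are compatible: Proposition \ref{mainPr}(2) bounds running time in terms of $|c_OY|$ while the corollary uses the input length $n$, but $c_O$ is a polynomial encoding with polynomial-time inverse, so the two differ only polynomially and that difference is again absorbed by the LUBP gap. One also notes that $D$, $b$, $s$ are the constants of condition (iii) and do not depend on the program, so $\zeta=s^{-1}$ is a genuine constant. Granting these, the contradiction goes through and $\mathcal{L}\notin DTIME(F(D^{-\zeta}\cdot n^{\zeta}))$; the only point requiring real thought, to repeat, is the limiting absorption of the $a$- and $\delta$-slack into the LUBP gap, the diagonalization proper being already carried out inside the proof of the proposition.
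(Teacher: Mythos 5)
Your reduction to Proposition \ref{mainPr}(2) breaks down at exactly the step you flag as delicate, and the fix you offer does not repair it: the comparison is irreparably in the wrong direction. In case (a) one has $\rho=(s+\delta)^{-1}<\zeta=s^{-1}$ for every admissible $\delta>0$, so $D_2n^{\rho}\big/\big(D^{-\zeta}n^{\zeta}\big)\to 0$ and hence, $F$ being monotone, $F(D_2\,n^{\rho})\leqslant F(D^{-\zeta}n^{\zeta})$ for all large $n$, whatever fixed $a>1$, $\delta>0$ you choose; in case (b) likewise $D_2=(a(D+\delta))^{-1}<D^{-1}$ with $\rho=\zeta=1$. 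So the lower bound delivered by part (2) never ``exceeds the assumed upper bound on long enough witnesses'': running for more than $F(D_2|c_OY|^{\rho})$ steps is perfectly compatible with running within $F(D^{-\zeta}|c_OY|^{\zeta})$ steps, and no contradiction arises for any fixed choice of parameters; driving $a\downarrow 1$, $\delta\downarrow 0$ does not help, because the witnesses $Y$ produced by (2) change with $(a,\delta)$. The LUBP closure facts cannot reverse the inequality: they assert that $F(rx)$, $F(x^m)$ and $F(x)-F(dx)$ are superpolynomial, which serves to absorb \emph{additive polynomial overhead} into a gap $F(x)-F(cx)$ with $c<1$; here the discrepancy sits inside the argument of $F$ and in case (a) is a factor $n^{\zeta-\rho}$, not a constant. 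The case-(b) fallback via linear speedup fails for the same reason: speedup divides the time bound by a constant $k$, whereas in the diagonalization one must beat $F(|c_O\widehat{M}|)$ with an $M_2$-stage costing about $k^{-1}F(2|c_O\widehat{M}|)$, and for a LUBP such as $\exp$ no constant $k$ achieves this for long clones. Your parenthetical alternative --- rerunning the diagonal construction with the hypothetical $F(D^{-\zeta}n^{\zeta})$-recognizer in the role of $M_2$ --- hits the same wall: with the exact constant, that stage is bounded only by roughly $F(|c_OP|^{1+b\zeta})$ in case (a) and $F(2|c_OP|)$ in case (b), not by $F(|c_OP|/a^{\rho})$; the parameters $a$ and $\delta$ are introduced in the proposition precisely to manufacture that strict margin.

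For comparison, the paper proves the corollary by nothing more than the limiting remark that $s_1=s+\delta$ and $aD_1$ tend to $s$ and $D$ as $a\to 1$, $\delta\to 0$, so that $\rho$, $n^{\rho}$ and $D_1^{-\rho}$ tend to $\zeta$, $n^{\zeta}$ and $D^{-\zeta}$; it treats the corollary as the limit form of part (2) and never confronts the fixed-parameter comparison you correctly single out. So your diagnosis of where the difficulty lies is sound, but the absorption argument does not close it. What Proposition \ref{mainPr} rigorously yields is the family of statements $\mathcal{L}\notin DTIME\big(F((a(D+\delta))^{-\rho}\,n^{\rho})\big)$ for every $a>1$ and $\delta>0$, i.e.\ the corollary with the exponent relaxed below $\zeta$ and the constant above $D$; that weaker form is also all that the later applications require (Corollary \ref{comRecB} already carries an $\varepsilon$). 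Obtaining the exact-constant statement would need an argument going beyond both your proposal and the paper's two-line limit.
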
 

\begin{proof} Really, $s_1\!=\!s\!+\!\delta$ and $aD_1\!=\!a(D\!+\!\delta)$ 
tend to $s$ and $D$ respectively, when $a$ tends to one and $\delta$ tends to 
zero. Hence, $\rho\!=\!(s\!+\!\delta)^{-1}$, $n^{\rho}$, and $D_{1}^{-\rho}$ 
accordingly tend to $s^{-1}$, $n^{s^{-1}}$, and $D^{-s^{-1}}$ in this case. 
\end{proof}

\section{Necessary agreements and the main result}\label{s3}

In this section, we specify the restrictions on the used Turing machines,
the characteristics of their actions, and the methods of recording their 
instructions and Boolean formulae. These agreements are very important in   
proving the main theorem. Although any of these restrictions can be 
omitted at the cost of a complication of proofs.

\subsection{On the Turing machines and recording of Boolean formulae}\label{s3.1}
We reserve the following alphabet for the formulae of the signature of the 
two-element Boolean algebra $\mathcal{B}$: \\ a) signature symbols $\cap, 
\cup, C, 0, 1$ and equality sign $\approx$; \ \
b) Latin letters for the indication of the types of the object variables; \ \ 
c) Arabic numerals and comma for the writing of indices; \ \ 
d) Logical connectives $\neg, \wedge, \vee, \rightarrow$; \ \ e) the signs of 
quantifiers $\forall, \exists$; \ \ f) auxiliary symbols: (,). All these 
symbols constitute the first part of {\it a natural language}.%

\begin{rmk}\label{remeq} Let us pay attention to that we use three different  
symbols for the denotation of equality. The first is the signature symbol 
"$\approx$". It applies only inside the formulae of a logical theory. The second 
is the ordinary sign "$=$". It denotes the real or assumed equality and is used 
in our discussions on the formal logical system. The third sign 
"$\rightleftharpoons$" designates the equality in accordance with a definition.
\end{rmk} 

The priority of connectives and operations or its absence is inessential,
as a difference in length of formulae is linear in these cases.

Hereinafter we consider only deterministic machines with the fixed tape
alphabet $A$, which contains at least four symbols: the first of them is a 
designating "blank" symbol, denoted $\Lambda$; the second is a designating 
"start" symbol, denoted $\rhd$; and the last two are the numerals 0,1 (almost 
as in Section 1.2 of \cite{ArBar}). As usual, the machine cannot write or erase 
the $\rhd$ symbol. Nevertheless, the tape alphabet may only consist of two symbols 0 
and 1 as in some chapters of \cite{Mal}; at that the 0 symbol functions both as the 
$\rhd$ symbol and as the $\Lambda$ symbol. In this case, our modeling formulae 
become completely short (see Subsection \ref{s4.1} and Remark \ref{rem4}).

It is implied that the simulated machines have the only tape, seeing that the 
transformation of the machine program from a multi-tape variant to a 
single-tape version is feasible in the polynomial time on the length of the
program, at that the running time increases polynomially too 
\cite{AHU,ArBar,G-J}. Although the auxiliary machines may be multi-tape.

The machine tape is infinite only to the right, because the Turing machines 
are often considered in this manner (e.g., \cite{AHU,ArBar,ComHen,Ck,Fish-Rab,
G-J},\cite{Mal}--\cite{Vor}). 
Moreover, such machines can simulate the computations, which is $T$ steps in 
length on the two-sided tape machine, in linear time of $T$ \cite{ArBar}. %
The tape contains initially the start symbol $\rhd$ in the leftmost square, a 
finite non-blank input string $X$, and the blank symbol $\Lambda$ on the rest 
of its cells. The head is aimed at the left end of the tape, and the machine 
is in the special starting state $q_{start}\!=\!q_0$. When the machine recognizes 
an input, it enters the accepting state $q_{1}\!=\!q_{acc}$ or the rejecting 
state $q_{2}\!=\!q_{rej}$.

Our machines have the single-operand instructions of a kind \ 
$q_{i}\alpha\!\rightarrow \!q_{j}\beta$ as in \cite{Mal}, which differ from 
double-operand instructions of a form $q_{i}\alpha\!\rightarrow\!q_{j}\beta 
\gamma$, where $\alpha\!\in\!A; \ \beta,\gamma\!\in\!A\cup\!\{R,L\}$. Even if 
we regard the execution of a double-operand instruction as one step of 
computation, then the difference in length of the running time will be linear.

The Turing machines do not fall into a situation when the machine stopped, but 
its answer remained undefined. Namely, they do not try to go beyond the left 
edge of the tape; and besides, they do not contain the {\it hanging} (or {\it 
pending}) internal states $q_{j}$, for which $j\!\neq\!0,1,2$, and there exist 
instructions of a kind \ $\ldots\rightarrow q_{j}\beta$, but no instructions 
are beginning with $q_{j}\alpha\rightarrow\ldots$ at least for one 
$\alpha\!\in\!A$. The attempts to go beyond the left edge of the tape are 
blocked by the replacement of the instructions of a form $q_{i}\rhd\!\rightarrow
\!q_{k}L$ \ by \ $q_{i}\rhd\!\rightarrow\!q_{i}\rhd$. The hanging states are 
eliminated by adding the instructions of a kind $q_{j}\alpha\!\rightarrow\!
q_{j}\alpha$ for each of the missing alphabet symbol $\alpha$.

The programs of the single-tape Turing machines with the tape alphabet $A$ are 
written by the symbols of this alphabet and the application of the symbols 
$q,R,L,\rightarrow$, Arabic numerals, and comma. This is the second, 
last part of {\it a natural language}.

\subsection{The main theorem and its corollary}\label{s3.2}
Let $c_OM$ be a chosen polynomial code of an object $M$ by a string over a 
tape alphabet $A$ --- see the beginning of Subsection \ref{s2.2}. We suppose 
that for this encoding, there exists a linear function $l$ such that the 
inequalities $|M|\!\leqslant\!|c_OM|\!\leqslant\!l(|M|)$ hold for any object 
$M$ of the natural language described in the previous subsection. 

\begin{thm}\label{main}
For each deterministic Turing machine $P$ and every input string $X$, one can 
write a closed formula (sentence) $\Omega(X,P)$ of the signature of the 
two-element Boolean algebra $\mathcal{B}$ with the following properties:

(i)\ there exists a polynomial $g$ such that the code \ $c_O\,\Omega(X,P)$ 
\ is written within time \ $g(|X|,|c_OP|)$ for all $X$ and $P$;

(ii)\ $Th(\mathcal{B})\vdash\Omega(X,P)$ \ if and only if the Turing machine 
$P$ accepts input $X$ within time \ $\exp(|X|)$;

(iii)\ for every $\varepsilon\!>\!0$, there is a constant \ $D\!>\!0$ 
(depending on the used encoding) such that the inequalities \
$|X|\!<\!|c_O\,\Omega(X,P)|\!\leqslant\! D\cdot|c_OP|\cdot|X|^{2+\varepsilon}$
 \ hold for all sufficiently long $X$. \end{thm}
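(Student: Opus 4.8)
The plan is to simulate the deterministic computation of $P$ on input $X$ for $\exp(|X|)$ steps by a quantified Boolean formula $\Omega(X,P)$, following the Stockmeyer--Meyer ``doubling'' trick adapted to the first-order language of the two-element Boolean algebra $\mathcal{B}$, where the only terms are built from $\cap,\cup,C,0,1$ and variables ranging over $\{0,1\}$. The central design decision, already announced in Subsection~\ref{s1.2}, is that a configuration at time $t$ is not encoded cell-by-cell (which would be exponentially long) but by a bounded block of Boolean variables: variables $x_{t,0},\dots,x_{t,n}$ encoding a cell address in binary, a variable (or small tuple) recording the symbol in the addressed cell, variables recording the head position and the internal state (state numbers written in binary, so $\mathcal{O}(\log|P|)$ bits), together with the ``content function'' accessed pointwise via a universally quantified address. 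So the first step is to fix these encodings precisely and to write, for a single instruction $q_i\alpha\to q_j\beta$ of $P$, a formula $\mathrm{Instr}_{i}(\widehat{u},\widehat{v})$ of length $\mathcal{O}(n)$ (indices aside) asserting that configuration $\widehat{v}$ follows from $\widehat{u}$ by that instruction; disjoining over all instructions of $P$ gives a one-step formula $\mathrm{Next}(\widehat{u},\widehat{v})$ of length $\mathcal{O}(|P|\cdot n)$, using the crucial locality observation that only the scanned cell changes, so the rest of the tape is ``copied'' by a single $\forall$-quantified address (the formula $\Delta^{\mathit{cop}}$).

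The second step is the iterated-squaring construction. Define $\mathrm{Reach}_{0}(\widehat{u},\widehat{v}) := \mathrm{Next}(\widehat{u},\widehat{v})\vee(\widehat{u}\!\approx\!\widehat{v})$ and, for $m\ge 0$,
\[
\mathrm{Reach}_{m+1}(\widehat{u},\widehat{v}) := \exists\widehat{w}\,\forall\widehat{p}\,\forall\widehat{q}\,\bigl[\bigl((\widehat{p}\!\approx\!\widehat{u}\wedge\widehat{q}\!\approx\!\widehat{w})\vee(\widehat{p}\!\approx\!\widehat{w}\wedge\widehat{q}\!\approx\!\widehat{v})\bigr)\rightarrow\mathrm{Reach}_{m}(\widehat{p},\widehat{q})\bigr],
\]
so that $\mathrm{Reach}_{m}$ expresses reachability in at most $2^{m}$ steps while only adding $\mathcal{O}(n)$ symbols per level (each $\widehat{u},\widehat{v},\widehat{w},\widehat{p},\widehat{q}$ being an $\mathcal{O}(n)$-tuple); taking $m=\mathcal{O}(n)$ covers $\exp(|X|)$ steps, at total length $\mathcal{O}(|P|\cdot n\cdot n)=\mathcal{O}(|P|\cdot n^2)$ before indices. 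Here one must be a little careful that ``$\widehat{p}\approx\widehat{u}$'' for tuples abbreviates a conjunction of $\mathcal{O}(n)$ equalities, and that the equality between the pointwise-accessed tape contents is handled by an extra $\forall$ over addresses inside the equality of configurations — this is where the quantifier alternation in $\mathrm{Next}$ forces $\mathrm{Reach}$ to genuinely alternate, unlike the purely propositional Cook formula. Then $\Omega(X,P)$ is $\exists\widehat{u}_{0}\,\exists\widehat{u}_{f}\,[\mathrm{Init}_{X}(\widehat{u}_{0})\wedge \mathrm{Reach}_{m}(\widehat{u}_{0},\widehat{u}_{f})\wedge \mathrm{Acc}(\widehat{u}_{f})]$, where $\mathrm{Init}_{X}$ asserts the start configuration (the $\mathcal{O}(n^2)$-length part that writes down $X$ symbol by symbol — unavoidably quadratic, hence the exponent $2$ in property~(iii)) and $\mathrm{Acc}$ asserts the state is $q_{acc}$.

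The third step is correctness, i.e. property~(ii): $\mathrm{Next}$ exactly captures the instruction semantics because $P$ is deterministic and, by the conventions of Subsection~\ref{s3.1}, has no hanging states, never walks off the left end, and uses single-operand instructions; hence a standard induction on $m$ shows $\mathcal{B}\models \mathrm{Reach}_{m}(\widehat{a},\widehat{b})$ iff $P$ goes from the configuration coded by $\widehat{a}$ to that coded by $\widehat{b}$ in $\le 2^{m}$ steps, and $\mathcal{B}\models\Omega(X,P)$ iff $P$ accepts $X$ within $\exp(|X|)$ steps (using that once $P$ halts it loops, so ``within $2^m$ steps'' and ``in exactly some step $\le 2^m$'' agree). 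Property~(i) is routine: every piece is produced by an explicit syntactic recursion, and with decimal (or binary) indices the code $c_O\Omega(X,P)$ is computable in time polynomial in $|X|$ and $|c_OP|$. Property~(iii): counting symbols including indices, $\mathrm{Init}_X$ contributes $\mathcal{O}(n^2\log n)$, the iterated $\mathrm{Reach}_m$ contributes $\mathcal{O}(|c_OP|\cdot n^2\log n)$ because there are $\mathcal{O}(n)$ levels each of size $\mathcal{O}(|c_OP|\cdot n\log n)$; absorbing the $\log n$ factor into $n^{\varepsilon}$ gives $|c_O\Omega(X,P)|\le D\cdot|c_OP|\cdot|X|^{2+\varepsilon}$ for large $X$, and the lower bound $|X|<|c_O\Omega(X,P)|$ holds since $\mathrm{Init}_X$ literally contains the symbols of $X$. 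I expect the main obstacle to be the bookkeeping in the one-step formula: encoding ``the content of the cell at the head position'' and ``the content at an arbitrary address $\widehat{p}$'' in a way that is simultaneously (a) short, (b) faithfully copied by $\Delta^{\mathit{cop}}$ except at the one changed cell, and (c) compatible with substituting configuration-tuples for $\widehat{u},\widehat{v},\widehat{w},\widehat{p},\widehat{q}$ across all the nested quantifiers without a blow-up — getting the variable-renaming/substitution discipline right (and later eliminating the redundant intermediate-time variables, keeping only those with $t\le n$ or $t=T$, as flagged in Subsection~\ref{s1.2}) is the delicate part; the high-level $2^m$-squaring recursion itself is standard.
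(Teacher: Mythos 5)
Your one-step design (locality of the head move, copying the untouched cells through one universally quantified address) and your use of the Stockmeyer--Meyer squaring to cover $\exp(|X|)$ steps with $\mathcal{O}(n)$ levels, as well as the final length accounting, all coincide with the paper. The gap is in the final assembly and in the claimed semantics of $\mathrm{Reach}_m$. Your configuration descriptors $\widehat{u}$ are tuples of $\mathcal{O}(n)$ Boolean first-order variables (one cell address, the symbol at that address, head position, state, scanned symbol). Since within $\exp(n)$ steps the machine can write on $\exp(n)$ cells, such a tuple does not determine a configuration, so the statement ``$\mathcal{B}\models\mathrm{Reach}_m(\widehat{a},\widehat{b})$ iff $P$ goes from the configuration coded by $\widehat{a}$ to the one coded by $\widehat{b}$ in at most $2^m$ steps'' is not even well formed, and your ``standard induction on $m$'' cannot be carried out: the existentially quantified midpoint $\widehat{w}$ fixes only $\mathcal{O}(n)$ bits of the midpoint configuration, so reachability does not compose through the squaring. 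Concretely, with a one-step formula built as in the paper (a conjunction of implications guarded by timers on the state/scanned-symbol fields), a midpoint tuple whose scanned-symbol field is a code representing no alphabet symbol makes every guard false, hence $\mathrm{Next}(\widehat{w},\widehat{u}_f)$ vacuously true for every $\widehat{u}_f$; with a disjunctive $\mathrm{Next}$ the copying clause between two pointwise tuples constrains only the single recorded cell, so intermediate tape contents are essentially unconstrained. Either way your outermost-existential sandwich $\exists\widehat{u}_0\exists\widehat{u}_f[\mathrm{Init}_X\wedge\mathrm{Reach}_m\wedge\mathrm{Acc}]$ admits spurious witnesses and does not track acceptance.

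This is exactly the point where the paper departs from the Stockmeyer--Meyer format. Its modeling sentence is universally quantified and of implication shape, $\forall\widehat{y}_0,\widehat{y}_T\,\{[\chi(0)\;\&\;\Phi^{(n)}(P)]\to\chi(\omega)\}$ (Subsection \ref{s6.3}), and correctness is never stated as a binary relation between tuple-coded configurations: it is stated and proved relative to the actual machine configurations $K(t)$ through the formulae $\Psi K(t)$. Proposition \ref{Pr2} chooses special values of the variables to show that any incorrect successor description falsifies the formula, Proposition \ref{Pr3} gives the converse, Proposition \ref{Pr4} pushes both directions through the squaring by induction on $s$ (using the special values again), and Proposition \ref{Pr5} assembles $\Omega(X,P)$. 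The paper calls this ``conventionally consistent'' modeling, as opposed to the ``complete simultaneous'' modeling of \cite{S-M} (Subsection \ref{s8.2}); the existential, complete-simultaneous format is legitimate only when a whole configuration fits into the quantified tuple, as in the polynomial-space setting of \cite{S-M}, which is not the case here. To repair your argument you would have to replace the existential sandwich by the universal implication form and redo the correctness proof relative to genuine configurations; that is the actual content of Sections \ref{s5}--\ref{s6}, whereas properties (i) and (iii) are, as you say, routine.
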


\begin{proof} See Sections \ref{s4}--\ref{s6}. Now we just note that according to 
the agreement in the beginning of this subsection, the calculation of the 
lengths of all components of the modeling formulae will be based on the estimate 
of the quantity of all the symbols, of the natural language of Subsection 
\ref{s3.1}, involved in their recording. 

At first, we will construct the very long formulae that simulate the 
computations. These formulae will have a huge number of "redundant" variables. 
We will take care of the brief record of the constructed formulae 
after we ascertain the correctness of our modeling (see Propositions \ref{Pr2}
(ii), \ref{Pr3}, and \ref{Pr4}(ii) below). The modified Stockmeyer and Meyer 
method is substantially used at that.\end{proof} 

\begin{cor}\label{comRecB} For every $\varepsilon\!>\!0$, \ $Th(\mathcal{B})\!
\notin\!DTIME(\exp(D^{-\rho}\cdot n^{\rho}))$, \ where $\rho\!=\!(2\!+\!
\varepsilon)^{-1}$\end{cor}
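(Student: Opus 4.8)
\textbf{Proof proposal for Corollary \ref{comRecB}.}

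The plan is to derive this corollary directly from Theorem \ref{main} by invoking Corollary \ref{comRecL} (equivalently, part (2) of Proposition \ref{mainPr}) with the language $\mathcal{L}\rightleftharpoons Th(\mathcal{B})$ and with $F(n)\rightleftharpoons\exp(n)$. First I would check that the hypotheses of Proposition \ref{mainPr} are met. The function $\exp(n)=2^n$ is a limit upper bound for all polynomials, since every polynomial is asymptotically dominated by $2^n$; so $F$ qualifies as a LUBP in the sense of Definition \ref{dLUBP}. The sentence $S(P,X)$ demanded by the proposition is taken to be $\Omega(X,P)$ from Theorem \ref{main}: property (i) of the proposition is property (i) of the theorem (polynomial-time constructibility of the code, with the polynomial $g$ in two arguments, which is bounded by a polynomial in $|X|+|c_OP|$); property (ii) of the proposition is property (ii) of the theorem (membership in $Th(\mathcal{B})$ iff $P$ accepts $X$ within $\exp(|X|)$ steps); and property (iii)(a) of the proposition is obtained from property (iii) of the theorem, reading off $b=1$ and $s=2+\varepsilon$, while the left inequality $|X|<|c_O\Omega(X,P)|$ supplies the required lower bound $|X|\leqslant|c_OS(P,X)|$.

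With the hypotheses in place, Corollary \ref{comRecL} yields $\mathcal{L}\notin DTIME(F(D^{-\zeta}\cdot n^{\zeta}))$ with $\zeta=s^{-1}=(2+\varepsilon)^{-1}$. Substituting $F=\exp$, $\mathcal{L}=Th(\mathcal{B})$, and writing $\rho$ for $\zeta=(2+\varepsilon)^{-1}$ gives exactly $Th(\mathcal{B})\notin DTIME(\exp(D^{-\rho}\cdot n^{\rho}))$, which is the assertion. One small point worth spelling out is the quantifier structure: Theorem \ref{main}(iii) produces, for each $\varepsilon>0$, a constant $D=D(\varepsilon)$; Corollary \ref{comRecL} is then applied for that fixed $\varepsilon$, and the resulting non-membership statement is again stated "for every $\varepsilon>0$", with $D$ and $\rho$ depending on $\varepsilon$. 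This matches the phrasing of the corollary verbatim.

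There is essentially no obstacle here beyond bookkeeping: the entire content of the corollary is already encapsulated in Theorem \ref{main}, whose proof is deferred to Sections \ref{s4}--\ref{s6}, and in the general machinery of Proposition \ref{mainPr}. The only thing to be careful about is that Proposition \ref{mainPr} is phrased in terms of lengths of \emph{codes} $|c_OP|$, $|c_OS(P,X)|$ rather than raw lengths; but the agreement at the start of Subsection \ref{s3.2} fixes a linear function $l$ with $|M|\leqslant|c_OM|\leqslant l(|M|)$ for every natural-language object $M$, so passing between $|c_OP|$ and $|P|$, and between $|c_O\Omega(X,P)|$ and $|\Omega(X,P)|$, only changes the constants $D$, $D_1$, $D_2$ by bounded factors and leaves the exponent $\rho$ and the conclusion unchanged. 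Hence the proof reduces to: verify the three hypotheses of Proposition \ref{mainPr} for $(\mathcal{L},F,S)=(Th(\mathcal{B}),\exp,\Omega)$ in case (a) with $b=1$, $s=2+\varepsilon$, then quote Corollary \ref{comRecL}.
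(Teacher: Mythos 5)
Your proposal is correct and follows exactly the paper's route: the paper proves this corollary by observing that it follows immediately from Theorem \ref{main} together with Corollary \ref{comRecL}, which is precisely your argument with $\mathcal{L}=Th(\mathcal{B})$, $F=\exp$, $S(P,X)=\Omega(X,P)$, $b=1$, $s=2+\varepsilon$, so $\zeta=\rho=(2+\varepsilon)^{-1}$. Your extra bookkeeping about codes versus natural-language lengths is consistent with the paper's conventions and does not change anything.
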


\begin{proof} It straightforwardly follows from the theorem and Corollary 
\ref{comRecL}. \end{proof} 

\begin{cor}\label{mainCor} The class $\mathbf{P}$ is a proper subclass of the 
class $\mathbf{PSPACE}$. \end{cor}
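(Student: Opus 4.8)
The plan is to produce a single concrete witness language, for which the obvious candidate is $Th(\mathcal{B})$ itself, and to argue that it lies in $\mathbf{PSPACE}\setminus\mathbf{P}$. The first step would be to record that $Th(\mathcal{B})\in\mathbf{PSPACE}$. Given a sentence $\varphi$ over the signature $\{\cap,\cup,C,0,1,\approx\}$, one decides whether $\mathcal{B}\models\varphi$ by the standard recursive model-checking procedure over the two-element domain $\{0,1\}$: each quantifier $\forall x$ (resp. $\exists x$) is unfolded as a conjunction (resp. disjunction) over the two values of $x$, and the truth value of a quantifier-free matrix under a fixed assignment is obtained by evaluating its terms bottom up using the tables for $\cap,\cup,C$. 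A depth-first implementation keeps at any moment a single partial assignment of Boolean values to the variables currently in scope (at most $|\varphi|$ bits) together with a term-evaluation stack of depth $O(|\varphi|)$, so it runs in space $O(|\varphi|)$, hence in polynomial space; equivalently, $Th(\mathcal{B})$ is polynomially interreducible with $TQBF$ by translating $\cap,\cup,C$ to $\wedge,\vee,\neg$ and $t_1\approx t_2$ to $(t_1\wedge t_2)\vee(\neg t_1\wedge\neg t_2)$.

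The second step would be to derive $Th(\mathcal{B})\notin\mathbf{P}$ from Corollary~\ref{comRecB}. Fix any $\varepsilon>0$ and put $\rho=(2+\varepsilon)^{-1}>0$, so that $Th(\mathcal{B})\notin DTIME(\exp(D^{-\rho}\cdot n^{\rho}))$. Since $\rho>0$, the exponent $D^{-\rho}n^{\rho}$ eventually dominates $c\log_2 n$ for every constant $c$, and therefore the bound $\exp(D^{-\rho}n^{\rho})=2^{D^{-\rho}n^{\rho}}$ eventually exceeds every polynomial $n^{c}$; consequently $DTIME(n^{c})\subseteq DTIME(\exp(D^{-\rho}\cdot n^{\rho}))$ for each $c$. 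Hence, if $Th(\mathcal{B})$ were in $\mathbf{P}$ it would lie in some $DTIME(n^{c})$ and thus in $DTIME(\exp(D^{-\rho}\cdot n^{\rho}))$, contradicting Corollary~\ref{comRecB}; so $Th(\mathcal{B})\notin\mathbf{P}$.

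The final step is immediate: the inclusion $\mathbf{P}\subseteq\mathbf{PSPACE}$ is unconditional, and the two preceding steps exhibit a language in $\mathbf{PSPACE}$ that is not in $\mathbf{P}$, so the inclusion is proper, which is exactly the assertion of the corollary. The only ingredient that is not pure bookkeeping is the membership $Th(\mathcal{B})\in\mathbf{PSPACE}$; the growth-rate comparison in the second step and the textbook inclusion $\mathbf{P}\subseteq\mathbf{PSPACE}$ are routine. Accordingly I would expect the main (still elementary) obstacle to be a careful description of the space-bounded evaluation algorithm for $\mathcal{B}$, specifically the verification that the recursion depth together with the stored partial assignment stays within a polynomial — indeed linear — bound in $|\varphi|$, which is well within standard techniques.
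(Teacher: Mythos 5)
Your argument is correct and follows essentially the same route as the paper: the witness is $Th(\mathcal{B})$, its non-membership in $\mathbf{P}$ comes from Corollary~\ref{comRecB} because $\exp(D^{-\rho}n^{\rho})$ dominates every polynomial, and its membership in $\mathbf{PSPACE}$ is the other ingredient. The only cosmetic difference is that you justify $Th(\mathcal{B})\in\mathbf{PSPACE}$ by a direct linear-space evaluation over the two-element algebra, whereas the paper cites the polynomial equivalence with $TQBF$ and the $\mathbf{PSPACE}$-completeness of $TQBF$ from Stockmeyer--Meyer, an alternative you also note.
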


\begin{proof} Really, the theory $Th(\mathcal{B})$ does not belong to the class 
$\mathbf{P}$ under the previous corollary, and this theory is 
equivalent to the language $TQBF$ relative to polynomial reduction.  But the 
second language belongs to the class $\mathbf{PSPACE}$, moreover, it is 
polynomially complete for this class \cite{S-M}. \end{proof}

\begin{rmk}\label{rem3} This result is quite natural and expected for a long 
time. Its proof is yielded by one of the few possible ways. Indeed, since the 
language $TQBF$ is polynomially complete for the class $\mathbf{PSPASE}$, the 
inequality $\mathbf{P}\neq\mathbf{PSPASE}$ implies the impossibility of the 
inclusion $Th(\mathcal{B})\in\mathbf{P}$ that is almost equivalent to 
$Th(\mathcal{B})\notin DTIME(\exp(dn^{\delta}))$ for suitable $d,\delta\!>\!0$, 
as it is clear that $Th(\mathcal{B})\in DTIME(\exp(d_1n))$ for some $d_1$.\end{rmk} 

\subsection{Supplementary denotations and arrangements}\label{s3.3}
We introduce the following abbreviations and arrangements for the 
improvement in perception (recall that "$A \rightleftharpoons\mathcal{A}$"
means "$A$ is a designation for $\mathcal{A}$"):

(1) \ the square brackets and (curly) braces are equally applied with the 
ordinary parentheses in long formulae; \ \ (2) \ the connective $\wedge$ is 
sometimes written as $\&$; \ \ (3) \ the operation $\cap$ and connective \ 
$\wedge$ (\&) \ connect more closely than \ $\cup$ and \ $\vee, \rightarrow$; \ \ 
(4) \ $x\!<\!y\rightleftharpoons x\!\approx\!0\wedge y\!\approx\!1$; \ \
(5) \ $\langle\alpha_{0},\ldots,\alpha_{n}\rangle\!<\!\langle\beta_{0},\ldots,
\beta_{n}\rangle$ \  is the comparison of tuples in lexicographic ordering, 
i.e., it is the formula \
$$\alpha_{0}\!<\!\beta_{0}\vee\Bigl\{\alpha_{0}\!\approx\!\beta_{0}\wedge
\bigl[\alpha_{1}\!<\!\beta_{1}\vee\bigl(\alpha_{1}\!\approx\!\beta_{1}\wedge
\{\alpha_{2}\!<\!\beta_{2}\vee[\alpha_{2}\!\approx\!\beta_{2}
\wedge(\alpha_{3}\!<\!\beta_{3}\ldots)]\}\bigr)\bigr]\Bigr\}.$$

The symbol \ $\widehat{x}$ signifies an ordered set $\langle x_{0},\ldots,x_{n}
\rangle$, whose length is fixed. It is natural that "the formula" \ 
$\widehat{x}\!\approx\!\widehat{\alpha}$ denotes the system of equations \ 
$x_{0}\!\approx\!\alpha_{0}\!\wedge\ldots\wedge x_{n}\!\approx\!\alpha_{n}$.
The tuples of variables with two subscripts will occur only in the form where 
the first of these indices is fixed, for instance, $\langle u_{k,0},
\ldots,u_{k,n}\rangle$, and we will denote it by \ $\widehat{u}_{k}$.

Counting the length of a formula in the natural language, we are guided by the 
rule: a tuple $\widehat{x}$ has a length of $n\!+\!1$ plus $M$, which is the 
quantity of symbols involved in a record of the indices \ $0,\ldots,n$. The 
inequality \ $|\widehat{x}\!\approx\!\widehat{\alpha}|\!\leqslant\!M\!+\!3n\!+
\!3$ will hold, if $\widehat{\alpha}$ is a tuple of constants; and \
$|\widehat{x}\!\approx\!\widehat{\alpha}|\!\leqslant\!2M\!+\!3n\!+\!3$, 
when it consists of variables.

A binary representation of a natural number $t$ is denoted by $(t)_{2}$.

It is known that if \ $t\!=\!(\widehat{\gamma})\!=\!\langle\gamma_{0},\ldots,
\gamma_{n}\rangle_2$ \ is a binary representation of a natural number 
$t\!\leqslant\!\exp(2,n)$, then the numbers $t\!+\!1$ and $t\!-\!1$ will be 
expressed as \quad
$((\widehat{\gamma})\!+\!1)_2\!=\!\langle\gamma_{0}\!\oplus
\gamma_{1}\!\cdot\!...\!\cdot\gamma_{n-1}\!\cdot\gamma_{n},\,\ldots,\,
\gamma_{n-2}\!\oplus\gamma_{n-1}\!\cdot\gamma_{n},\,\gamma_{n-1}\!\oplus
\gamma_{n},\,\gamma_{n}\!\oplus\!1\rangle_2 $\\ \textrm{and} \qquad
$((\widehat{\gamma})\!-\!1)_2=\langle\gamma_{0}\!\oplus\!C\gamma_{1}\!\cdot\!
\ldots\!\cdot\!C\gamma_{n-1}\!\cdot\!C\gamma_{n},\,\ldots,\,\gamma_{n-2}\!\oplus
\!C\gamma_{n-1}\!\cdot\!C\gamma_{n},\,\gamma_{n-1}\!\oplus\!C\gamma_{n},
\,\gamma_{n}\!\oplus\!1\rangle_2,$ 
respectively, where the operation $\cap$ is written in the form of 
multiplication \ $x\cap y\!=\!x\cdot y$; and \ %
$x\oplus y\!\rightleftharpoons\!x\cdot Co(y)\cup Co(x)\cdot y$.

Let us pay attention that if $(\widehat{\gamma})$ is a binary representation 
of a natural number $t$, then $((t)_2)\!=\!t$ and $((\widehat{\gamma}))_2\!=\!
(t)_2$ according to our designations.

\begin{lem}\label{ll2} (i) $|\langle\alpha_{0},\ldots,\alpha_{n}\rangle\!<\!
\langle\beta_{0},\ldots,\beta_{n}\rangle|=\mathcal{O}(\max\{|\langle\alpha_{0},
\ldots,\alpha_{n}\rangle|,$\break $|\langle\beta_{0},\ldots,\beta_{n}\rangle|\})$.

(ii) If a tuple $(t)_{2}$ (together with the indices) is $l$ symbols in length, 
then the binary representation of the numbers $t\!\pm\!1$ will take up 
$\mathcal{O}(l^{2})$ symbols. \end{lem}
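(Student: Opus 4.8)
The plan is to prove both parts by a direct count of symbols, reading the lengths off the explicit formulae fixed in Subsection~\ref{s3.3}.

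For part~(i) I would expand the lexicographic comparison formula of item~(5) of Subsection~\ref{s3.3} block by block. It consists of $n\!+\!1$ nested blocks; the block at level $i$ contributes the subformula $\alpha_i\!<\!\beta_i$ (which unfolds by item~(4) to $\alpha_i\!\approx\!0\wedge\beta_i\!\approx\!1$), the subformula $\alpha_i\!\approx\!\beta_i$, and a bounded number of connectives $\vee,\wedge$ together with grouping symbols. Hence the length of level $i$ is $2|\alpha_i|+2|\beta_i|+\mathcal{O}(1)$, and summing over $i=0,\ldots,n$ the whole formula has length $2\sum_{i}|\alpha_i|+2\sum_{i}|\beta_i|+\mathcal{O}(n)$. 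Writing out the tuple $\langle\alpha_0,\ldots,\alpha_n\rangle$ already records each $\alpha_i$, so $\sum_{i=0}^{n}|\alpha_i|\leqslant|\langle\alpha_0,\ldots,\alpha_n\rangle|$, and likewise for $\beta$; moreover $n\!+\!1\leqslant|\langle\alpha_0,\ldots,\alpha_n\rangle|$ since the tuple has $n\!+\!1$ entries. Therefore the length is $\mathcal{O}(|\langle\alpha_0,\ldots,\alpha_n\rangle|+|\langle\beta_0,\ldots,\beta_n\rangle|)=\mathcal{O}(\max\{|\langle\alpha_0,\ldots,\alpha_n\rangle|,|\langle\beta_0,\ldots,\beta_n\rangle|\})$, which is~(i).

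For part~(ii), set $l=|(t)_2|$ where $(t)_2=\langle\gamma_0,\ldots,\gamma_n\rangle_2$; since this tuple has $n\!+\!1$ entries we get $n\!+\!1\leqslant l$, and since every $\gamma_i$ is recorded inside it, $\sum_{i=0}^{n}|\gamma_i|\leqslant l$. Now inspect the explicit expression for $((\widehat{\gamma})\!+\!1)_2$ displayed in Subsection~\ref{s3.3}. Its $k$-th component with $k\leqslant n\!-\!1$ is $\gamma_k\oplus(\gamma_{k+1}\!\cdot\gamma_{k+2}\!\cdot\ldots\cdot\gamma_n)$, and unfolding this single $\oplus$ by its definition turns it into a string in which $\gamma_k$ and each of $\gamma_{k+1},\ldots,\gamma_n$ occurs at most twice, together with $\mathcal{O}(n\!-\!k)$ occurrences of $\cdot,\cup,C$ and parentheses; so this component has length at most $2\sum_{i=0}^{n}|\gamma_i|+\mathcal{O}(n)=\mathcal{O}(l)$, and the final component $\gamma_n\oplus 1$ is shorter still. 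As $((\widehat{\gamma})\!+\!1)_2$ is the list of these $n\!+\!1$ components inside $\langle\ \rangle_2$, with only $\mathcal{O}(l)$ further symbols for separators and indices, its total length is $(n\!+\!1)\cdot\mathcal{O}(l)+\mathcal{O}(l)=\mathcal{O}(l)\cdot\mathcal{O}(l)=\mathcal{O}(l^{2})$. The expression for $((\widehat{\gamma})\!-\!1)_2$ differs only by prefixing a complement $C$ to some of the $\gamma_i$ inside the products, which changes each component's length by at most a constant factor, so it too is $\mathcal{O}(l^{2})$.

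The one place that needs care, and where a wasteful expansion would spoil the bound, is in part~(ii): one must unfold each $\oplus$ exactly once and keep the product $\gamma_{k+1}\!\cdot\ldots\cdot\gamma_n$ intact (prefixing $C$ rather than pushing it inside), so that each $\gamma_i$ is copied only boundedly often within a single component. Then $\Theta(l)$ components, each of length $\mathcal{O}(l)$, give $\mathcal{O}(l^{2})$ rather than $\mathcal{O}(l^{3})$; indeed the products $\gamma_{k+1}\!\cdot\ldots\cdot\gamma_n$ alone, summed over $k$, already amount to $\Theta(n^{2})$ symbols, so the exponent $2$ is best possible in terms of $n$. Everything else is routine bookkeeping against the length conventions of Subsection~\ref{s3.3}.
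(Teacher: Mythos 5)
Your proof is correct and is exactly the "direct calculation" the paper invokes for Lemma \ref{ll2}: you count that each $\alpha_i,\beta_i$ (resp.\ each $\gamma_i$) occurs a bounded number of times per block/component, giving the $\mathcal{O}(\max\{\cdot,\cdot\})$ bound in (i) and $n\!+\!1$ components of length $\mathcal{O}(l)$, hence $\mathcal{O}(l^{2})$, in (ii). No further comparison is needed, since the paper supplies no more detailed argument than this.
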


\begin{proof} It is obtained by direct calculation.\end{proof}

\section{The beginning of the proof of Theorem \ref{main}}\label{s4}

Prior to the writing of the formula $\Omega(X,P)$, we add $2|A|$ the {\em 
instructions of the idle run} to a program $P$, these instructions have the 
form \ $q_k\alpha\to q_k\alpha$, where \ $k\!\in\!\{1(accept),$ $2(reject)\}, 
\ \alpha\!\in\!A$. While the machine executes them, the tape configuration 
does not change.

\subsection{The primary and auxiliary variables}\label{s4.1}
In order to simulate the operations of a Turing machine $P$ on an input $X$ 
within the first \ $T\!=\!\exp(|X|)$ \ steps, it is enough to describe its  
actions on a zone, which is $T\!+\!1$ squares in width, since if $P$ 
starts its run in the zeroth cell, then it can finish a computation at most in 
the $T$th square. Because the record of the number $(T)_2$ has the $n\!+
\!1\!=\!|X|\!+\!1$ bit, the cell numbers are encoded by the values of the 
ordered sets of the variables of a kind "$x$": \ $\widehat{x}_{t}\!=\!\langle 
x_{t,0},\ldots, x_{t,n}\rangle$, which have a length of \ $n\!+\!1$. The first 
index $t$, i.e., {\it the color} of the record, denotes the step number, after 
which there appeared a configuration under study on the tape. So the formula \ 
$\widehat{x}_t\!\approx\!\widehat{\alpha}\rightleftharpoons x_{t,0}\!\approx\!
\alpha_{0}\wedge\ldots\wedge x_{t,n}\!\approx\!\alpha_{n}$ \ assigns the 
number \ $(\widehat{\alpha})$ \ of the required tape cell in the binary notation at 
the instant $t$.

Let us select so great a number $r$ in order that one can write down all the 
state numbers of the machine $P$ and encode all the symbols of the alphabet $A$ 
by means of the bit combinations of the same length $r\!+\!1$ at one time. 
Thus, $\exp(r\!+\!1)\!\geqslant\!|A|\!+\!U$, where $U$ is the maximal number 
of the internal states of $P$, and if $\beta\!\in\!A$, then 
$c\beta\rightleftharpoons\langle c\beta_{0},\ldots,c\beta_{r}\rangle$ will be 
the $(r\!+\!1)$-tuple, which codes $\beta$. So, the encoding $c_O$ applied 
in Sections \ref{s2} and \ref{s3} is "outside" (inherent a machine being 
simulated), and the encoding $c$ is "inner" (inherent a modeling formula). 

The formula $\widehat{f}_{t}\!\approx\!c\varepsilon$ represents an entry of 
symbol $\varepsilon$ in some cell after step $t$, where $\widehat{f}_{t}$ is 
the $(r\!+\!1)$-tuple of variables. When the cell, whose number is 
$(\widehat{\mu})$, contains the symbol 
$\varepsilon$ after step $t$, then this fact is associated with the {\it 
quasi-equation} (or {\it the clause}) of color $t$:
\begin{eqnarray*} \psi_{t}
(\widehat{\mu}\!\rightarrow\!\varepsilon) \ \rightleftharpoons \
[\widehat{x}_{t}\!\approx\!\widehat{\mu} \rightarrow
\widehat{f}_{t}\!\approx\!c\varepsilon] \ \rightleftharpoons \
[(x_{t,0}\!\approx\!\mu_{0}\wedge\ldots\wedge x_{t,n}\!\approx\!\mu_{n})
\!\rightarrow\\ \to (f_{t,0}\!\approx\!c\varepsilon_{0}\wedge\ldots\wedge
f_{t,r}\!\approx\!c\varepsilon_{r})]. \end{eqnarray*}

The tuples of variables $\widehat{q}_{t}$ and $\widehat{d}_{t}$ are 
accordingly used to indicate the number of the machine's internal state and 
the code of the symbol scanned by the head at the instant $t$. For every step 
$t$, a number \ $i\!=\!(\widehat{\delta})$ of the machine state $q_{i}$ 
and a scanned square's number $(\widehat{\xi})$ together with a symbol 
$\alpha$, which is contained there, are represented by a united
\emph{$\pi$-formula} of color $t$:  
\begin{eqnarray*}\pi_{t}(\alpha,(i)_{2}, \widehat{\xi})
\ \rightleftharpoons \ [\widehat{d}_{t}\!\approx\!c\alpha \wedge
\widehat{q_{t}}\!\approx\!\widehat{\delta}\wedge\widehat{z_{t}}\!
\approx\!\widehat{\xi}] \ \rightleftharpoons \ [(d_{t,0}\!\approx\!c
\alpha_{0}\wedge\ldots\wedge d_{t,r}\!\approx\!c\alpha_{r})\wedge\\ 
\wedge(q_{t,0}\!\approx\!\delta_{0}
\wedge\ldots\wedge q_{t,r}\!\approx\!\delta_{r}) 
\wedge(z_{t,0}\!\approx\!\xi_{0}\wedge\ldots\wedge
z_{t,n}\!\approx\!\xi_{n})],\end{eqnarray*}
where the ordered sets of variables $\widehat{d}_{t}$ and $\widehat{q}_{t}$ 
have a length of $r\!+\!1$; and $\widehat{z}_{t}$ is the $(n\!+\!1)$-tuple 
of variables and is assigned for the storage of the scanned cell's number. The 
formula expresses a condition for the applicability of instruction \ 
$q_i\alpha\!\rightarrow\!\ldots$; in other words, this is a {\it timer} that 
activates exactly this instruction, provided that the head scans the 
$(\widehat{\xi})$-th cell.

The basic variables \ $\widehat{x}_t, \widehat{z}_{t}$, and $\widehat{q}_{t}, 
\widehat{f}_{t}, \widehat{d}_{t}$ are introduced in great abundance in order 
to facilitate the proof. But a final modeling formula will only contain those 
of them for which $t\!=\!0,\ldots,n$ or \ $t\!=\!T \rightleftharpoons\exp(n)$ 
holds. The sets of the basic variables have the different lengths. However, 
this will not lead to confusion, since the tuples of the first two types \ 
$\widehat{x}_t$ and $\widehat{z}_{t}$ \ will always be $n\!+\!1$ in length, 
whereas the last ones \ $\widehat{q}_{t},\widehat{f}_{t}$, and $\widehat{d}_{t}$  
will have a length of $r\!+\!1$. The sets of constants or other variables may 
also be different in length, but such tuple will always be identically associated 
to some of the above mentioned ones.

The other variables are auxiliary. They will be described as needed. Their 
task consists of a determination of the values of the basic variables of the 
color \ $t+\!1$ provided that the primary ones of the color $t$ have the 
"correct" values. Moreover, this transfer must adequately correspond to that 
instruction which is employed at the step \ $t\!+\!1$. 

\begin{lem}\label{lcl} If the indices are left out of the account, then a clause 
$\psi_{t}(\widehat{u}\!\rightarrow\!\beta)$ and a timer ($\pi$-formula) will 
be \ $\mathcal{O}(n\!+\!r)$ in length. \end{lem}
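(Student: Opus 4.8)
The plan is to prove the bound by a direct symbol count, exactly in the spirit of the "direct calculation" proof of Lemma~\ref{ll2}, reading the structure of $\psi_{t}(\widehat{u}\!\rightarrow\!\beta)$ and of the $\pi$-formula straight off their definitions and invoking the length conventions of Subsection~\ref{s3.3} (notably $|\widehat{x}\!\approx\!\widehat{\alpha}|\!\leqslant\!M\!+\!3n\!+\!3$ for a tuple of constants, where $M$ is the contribution of the indices, which is precisely what we now discard).

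First I would handle the clause $\psi_{t}(\widehat{\mu}\!\rightarrow\!\varepsilon)\rightleftharpoons(\widehat{x}_{t}\!\approx\!\widehat{\mu})\rightarrow(\widehat{f}_{t}\!\approx\!c\varepsilon)$. Its antecedent is the conjunction of the $n\!+\!1$ atoms $x_{t,i}\!\approx\!\mu_{i}$; because $\widehat{\mu}$ is the binary notation of a cell number, each $\mu_{i}$ is a single digit, so, with the indices dropped, the antecedent is $\mathcal{O}(n)$ symbols long. Its consequent is the conjunction of the $r\!+\!1$ atoms $f_{t,j}\!\approx\!c\varepsilon_{j}$, and $c\varepsilon$ is the $(r\!+\!1)$-tuple of bits coding $\varepsilon$, so the consequent is $\mathcal{O}(r)$ symbols long. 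The implication sign and the enclosing brackets add only $\mathcal{O}(1)$, giving $|\psi_{t}(\widehat{\mu}\!\rightarrow\!\varepsilon)|=\mathcal{O}(n\!+\!r)$.

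Next I would run the analogous count for $\pi_{t}(\alpha,(i)_{2},\widehat{\xi})\rightleftharpoons(\widehat{d}_{t}\!\approx\!c\alpha)\wedge(\widehat{q}_{t}\!\approx\!\widehat{\delta})\wedge(\widehat{z}_{t}\!\approx\!\widehat{\xi})$: the first conjunct has $r\!+\!1$ atoms (as $c\alpha$ is an $(r\!+\!1)$-tuple of bits), the second has $r\!+\!1$ atoms, since $\exp(r\!+\!1)\!\geqslant\!|A|\!+\!U$ forces the state number $i$ to fit into $r\!+\!1$ bits so that $\widehat{\delta}=(i)_{2}$ is an $(r\!+\!1)$-tuple of digits, and the third has $n\!+\!1$ atoms with $\widehat{\xi}$ again a tuple of digits; the two joining conjunctions cost $\mathcal{O}(1)$, so $|\pi_{t}(\alpha,(i)_{2},\widehat{\xi})|=\mathcal{O}(n\!+\!r)$ once the indices are left out of account.

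The only point needing any care --- and so the place I would be most attentive --- is the bookkeeping hidden in the phrase ``if the indices are left out of account'': I must check that on every right-hand side the tuples $\widehat{\mu},c\varepsilon,c\alpha,\widehat{\delta},\widehat{\xi}$ really consist of constant digits (so that, after the subscripts are deleted, each atomic equation shrinks to a bounded number of symbols), and that each conjunction runs over exactly $n\!+\!1$ or exactly $r\!+\!1$ atoms, as prescribed by the fixed lengths of $\widehat{x}_{t},\widehat{z}_{t}$ and of $\widehat{f}_{t},\widehat{d}_{t},\widehat{q}_{t}$ together with the choice of $r$. Granting that, the estimate $\mathcal{O}(n\!+\!r)$ is immediate.
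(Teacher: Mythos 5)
Your proposal is correct and is exactly the ``direct calculation'' that the paper leaves implicit: counting $n\!+\!1$ atomic equations of bounded size (indices discarded) in the antecedent of the clause and $r\!+\!1$ in its consequent, and $r\!+\!1$, $r\!+\!1$, $n\!+\!1$ atoms in the three conjuncts of the $\pi$-formula, yielding $\mathcal{O}(n\!+\!r)$ in both cases. Nothing more is needed.
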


\begin{proof} It is obtained by direct calculation. \end{proof}

\subsection{The description of an instruction action}\label{s4.2}
The following formula $\varphi(k)$ describes an action of the $k$th 
instruction \ $M(k)\!=\!q_{i}\alpha\!\rightarrow\!q_{j}\beta$ 
(including the idle run's instructions; see the beginning of this section) at 
some step, where $\alpha\!\in\!A$, \ $\beta\!\in\!A\cup\{R,L\}$:  
\begin{eqnarray*}\varphi(k) \ \rightleftharpoons \ \forall\,\widehat{u} \ 
\bigl\{\pi_{t}(\alpha,(i)_{2},\widehat{u}) \ \rightarrow \ 
\bigl[\Delta^{cop}(\widehat{u}(\beta)) \ \& \ \forall\widehat{h} \bigl(
\Gamma^{ret}(\beta) \ \rightarrow \\ \to\ [\Delta^{wr}(\beta)\ \&\ \pi_{t+1}
(h,(j)_2,\widehat{u}(\beta))]\bigr)\bigr]\bigr\}.\end{eqnarray*}
For the sake of concreteness, we regard that this step has a number $t\!+\!1$, 
so we have placed such subscripts on both $\pi$-formulae. Now we will describe the
sense of the subformulae of $\varphi(k)$ with the free basic variables \ 
$\widehat{x}_{t},\widehat{q}_{t},\widehat{z}_{t},\widehat{d}_{t},\widehat{f}_{t},
\widehat{x}_{t+1},\widehat{q}_{t+1},\widehat{z}_{t+1},$ $\widehat{d}_{t+1}$, and
$\widehat{f}_{t+1}$.%

The first $\pi$-formula of color $t$ plays a role of \emph{a timer}. It starts 
up the fulfillment of the instruction with the prefix \ 
$q_{i}\alpha\!\rightarrow\!\ldots$ provided that a head scans the  
$(\widehat{u})$th square. For every $(n\!+\!1)$-tuple $\widehat{u}$ \ and 
a given meta-symbol $\beta\!\in\!\{R,L\}\cup A$, the number of the cell that 
will be scanned by the head after the action of the instruction $M(k)$ 
is specified as follows: \quad
$\widehat{u}(R)\!\rightleftharpoons\!((\widehat{u})\!+\!1)_2;
\quad \widehat{u}(L)\!\rightleftharpoons\!((\widehat{u})\!-\!1)_2$; \ \ 
and \quad $\widehat{u}(\beta)\!\rightleftharpoons\!\widehat{u}$ \ \ for
$\beta\!\in\!A$.

The formula \ $\Delta^{cop}(\widehat{u}(\beta))$ changes the color of records 
in all the cells, whose numbers are different from $(\widehat{u}(\beta))$; 
in other words, it "copies" the majority of records (this subformula is universal 
in essence --- see Remark \ref{rem4}):
\begin{eqnarray*}\Delta^{cop}(\widehat{u}(\beta))\rightleftharpoons
\forall\,\widehat{w}\,[\neg(\widehat{w}\!\approx\!\widehat{u}(\beta))\rightarrow
\exists\,\widehat{g}(\psi_{t}(\widehat{w}\!\rightarrow\!\widehat{g}) \ \wedge 
\ \psi_{t+1}(\widehat{w}\!\rightarrow\!\widehat{g}))].\end{eqnarray*}

If \ $\beta\!\in\!\{R,L\}$, then \ $\Gamma^{ret}(\beta) \ \rightleftharpoons \  
\psi_{t}(\widehat{u}(\beta)\!\rightarrow\!\widehat{h}) \ = \ [\widehat{x}_{t}\!
\approx\!\widehat{u}(\beta)\rightarrow\widehat{f}_{t}\!\approx\!\widehat{h}]$. 
An informal sense of this formula is the following: it "seeks" a code 
$\widehat{h}$ of the symbol, which will be scanned after the next step $t\!+\!1$ 
(by this reason it is named "retrieval"); for this purpose, it "inspects" the 
square that is to the right or left of the cell $(\widehat{u})$. When \ 
$\beta\!\in\!A$, there is no need to look for anything, so the formula \ 
$\Gamma^{ret}(\beta)$ \ will be very simple in this case: \ 
$\widehat{h}\!\approx\!c\beta$.
 
The formula \ $\Delta^{wr}(\beta)$ "puts" the symbol, whose code is 
$\widehat{h}$ and color is $t\!+\!1$, in the $(\widehat{u}(\beta))$th 
square: \ $\Delta^{wr}(\beta)\rightleftharpoons\psi_{t+1}(\widehat{u}(\beta)\!
\rightarrow\!\widehat{h}))$. 

The second $\pi$-formula of the color $t+\!1$ aims the head at the  
$(\widehat{u}(\beta))$th cell; places the symbol $\widehat{h}$ in this 
location; and changes the number of the machine state for $j$: \ $\widehat{z}
_{t+1}\!\approx\!\widehat{u}(\beta)\wedge\widehat{d}_{t+1}\!\approx\!
\widehat{h}\wedge\widehat{q}_{t+1}\!\approx\!(j)_2$.

\begin{lem}\label{lphi} (i) If \ $\beta\!\in\!A$, then the formulae \ 
$\Gamma^{ret}(\beta)$; $\pi_{t+1}(\widehat{h},(j)_{2},\widehat{u}(\beta))$; 
$\Delta^{wr}(k,\beta)$; $\Delta^{cop}(\widehat{u},\beta)$; and $\varphi(k)$ 
will be $\mathcal{O}(|\psi_{t+1}(\widehat{w}\!\rightarrow\!\widehat{g})|)$ in 
length. 

(ii) For $\beta\!\in\!\{R,L\}$, each of these formulae is 
$\mathcal{O}(n\cdot|\psi_{t+1}(\widehat{w}\!\rightarrow\!\widehat{g})|)$ in  
length.\end{lem}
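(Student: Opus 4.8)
The proof is a single bookkeeping computation, which I would organise around the abbreviation $L\rightleftharpoons|\psi_{t+1}(\widehat{w}_k\!\rightarrow\!\widehat{g}_k)|$. First I would record the elementary size facts coming from the counting convention of Subsection~\ref{s3.3} and Lemma~\ref{lcl}. Every tuple occurring in $\varphi(k)$, with the sole exception of $\widehat{u}_k(\beta)$ when $\beta\!\in\!\{R,L\}$, is an $(n\!+\!1)$-tuple or an $(r\!+\!1)$-tuple of variables or constants whose first index is one of $t,\,t\!+\!1,\,k$; since representatives $\widehat{x}_{t+1},\widehat{w}_k$ of the first kind and $\widehat{f}_{t+1},\widehat{g}_k$ of the second kind sit literally inside $\psi_{t+1}(\widehat{w}_k\!\rightarrow\!\widehat{g}_k)$, and since the decimal record of $t$ is no longer than that of $t\!+\!1$, every such tuple has length $\mathcal{O}(L)$. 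Consequently a quantifier prefix $\forall$ or $\exists$ applied to such a tuple, a system of equations or a disequation between two of them, and any clause of the shape of $\psi_{t+1}(\widehat{w}_k\!\rightarrow\!\widehat{g}_k)$ are all $\mathcal{O}(L)$ as well.

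For part~(i), $\beta\!\in\!A$, the tuple $\widehat{u}_k(\beta)$ equals $\widehat{u}_k$, again one of these $\mathcal{O}(L)$-tuples, so I would simply read off each of the five formulae as a Boolean combination of boundedly many of the above ingredients together with boundedly many connectives and brackets: $\Gamma^{ret}(\beta)$ is the single system $\widehat{h}_k\!\approx\!c\beta$; $\pi_{t+1}(\widehat{h}_k,(j)_{2},\widehat{u}_k)$ is three systems; $\Delta^{wr}(\beta)=\psi_{t+1}(\widehat{u}_k\!\rightarrow\!\widehat{h}_k)$ has the shape of $\psi_{t+1}(\widehat{w}_k\!\rightarrow\!\widehat{g}_k)$; $\Delta^{cop}(\widehat{u}_k)$ is two quantifier prefixes, one disequation, and the two clauses $\psi_{t}(\widehat{w}_k\!\rightarrow\!\widehat{g}_k),\psi_{t+1}(\widehat{w}_k\!\rightarrow\!\widehat{g}_k)$; and $\varphi(k)$ nests $\Delta^{cop}$, $\Gamma^{ret}$, $\Delta^{wr}$ and a second $\pi$-timer under two more quantifiers. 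Since each ingredient is $\mathcal{O}(L)$, so is each of the five formulae.

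For part~(ii), $\beta\!\in\!\{R,L\}$, the only structural change is that $\widehat{u}_k(\beta)=((\widehat{u}_k)\!\pm\!1)_{2}$ is now the tuple of $n\!+\!1$ Boolean terms displayed in Subsection~\ref{s3.3}, whose $i$-th entry involves $\mathcal{O}(n\!-\!i)$ occurrences of the variables $u_{k,0},\ldots,u_{k,n}$, the $u_{k,j}$ being indexed over the same range as the variables of $\widehat{w}_k$; summing over $i$ gives $\mathcal{O}(n^{2})$ variable occurrences, hence $|\widehat{u}_k(\beta)|=\mathcal{O}(n\cdot|\widehat{w}_k|)=\mathcal{O}(nL)$ (a mild sharpening of Lemma~\ref{ll2}(ii), which by itself gives only $\mathcal{O}(|\widehat{u}_k|^{2})$). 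Since $\widehat{u}_k(\beta)$ now occurs only a bounded number of times --- once in each of $\Gamma^{ret}(\beta)=\psi_{t}(\widehat{u}_k(\beta)\!\rightarrow\!\widehat{h}_k)$, $\Delta^{wr}(\beta)=\psi_{t+1}(\widehat{u}_k(\beta)\!\rightarrow\!\widehat{h}_k)$, the disequation of $\Delta^{cop}(\widehat{u}_k(\beta))$, and $\pi_{t+1}(\widehat{h}_k,(j)_{2},\widehat{u}_k(\beta))$, and $\mathcal{O}(1)$ times in $\varphi(k)$ --- while every other ingredient is exactly as in part~(i) and remains $\mathcal{O}(L)$, each of the five formulae is $\mathcal{O}(L)+\mathcal{O}(nL)=\mathcal{O}(nL)$.

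The one place where care is genuinely needed --- and the step I expect to be the main, if minor, obstacle --- is carrying the logarithmic cost of the decimal indices $0,\ldots,n$ consistently on both sides of each estimate, so that it cancels out of the comparison; in particular one must verify that $\widehat{u}_k(\beta)$ is only a factor $\mathcal{O}(n)$ longer than $\widehat{w}_k$, and not $\mathcal{O}(n\log n)$, which is why I would prefer the direct count of variable occurrences above to a naive use of the $\mathcal{O}(l^{2})$ estimate of Lemma~\ref{ll2}(ii). Everything else is a routine direct calculation.
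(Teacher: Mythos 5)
Your proof is correct and is essentially the calculation the paper intends: the paper's own proof of Lemma~\ref{lphi} is just ``follows from Lemmata \ref{ll2} and \ref{lcl} by direct calculation,'' and your bookkeeping in terms of $L=|\psi_{t+1}(\widehat{w}_k\!\rightarrow\!\widehat{g}_k)|$ fills that in faithfully, including the count of the $\mathcal{O}(1)$ occurrences of $\widehat{u}_k(\beta)$ in case (ii). Your one deviation --- replacing the $\mathcal{O}(l^{2})$ bound of Lemma~\ref{ll2}(ii) by the direct count $|\widehat{u}_k(\beta)|=\mathcal{O}(n\cdot|\widehat{w}_k|)$ so that the index lengths cancel on both sides --- is a harmless (and correct) sharpening; the paper never actually needs it, since indices are only counted ``implicitly'' here and in Subsection~\ref{s6.4} it works with the cruder bound $D_{4}\cdot[n\cdot(\lfloor\lg n\rfloor\!+\!2)]^{2}$ anyway.
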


\begin{proof} This follows from Lemmata \ref{ll2} and \ref{lcl} by direct 
calculation. \end{proof}

\subsection{The description of the running steps and\\ configurations}\label{s4.3}
At first, we will construct a formula $\Phi^{(0)}(P)$ describing one step of 
the machine run, when the machine $P$ is applied to a configuration that arose 
after some step $t$. Next, we will describe the machine actions over an exponential 
period of time employing the formulae; at that, the Stockmeyer and Meyer 
method will be used.

\subsubsection{One step} \label{s4.3.1}
Let $N$ be a quantity of the instructions of the machine $P$ together with $2|A|$ 
the idle run's ones (see the beginning of this section). The formula 
$\Phi^{(0)}(P)$ that describes one step (whose number is $t\!+\!1$) of 
$P$ is of the form:
$$\Phi^{(0)}(P)(\widehat{y}_{t},\widehat{y}_{t+1}) \ \rightleftharpoons \
\bigwedge_{0<\,k\leq\,N}\varphi_{}(k)(\widehat{y}_{t},\widehat{y}_{t+1}),$$  
where \
$\widehat{y}_{t}\rightleftharpoons\langle\widehat{x}_{t},\widehat{q}_{t},
\widehat{z}_{t},\widehat{d}_{t},\widehat{f}_{t}\rangle$ \ \ and \ \
$\widehat{y}_{t+1}\rightleftharpoons\langle\widehat{x}_{t+1},\widehat{q}_{t
+1},\widehat{z}_{t+1},\widehat{d}_{t+1},\widehat{f}_{t+1}\rangle$ are two \ 
$(2n\!+\!3r\!+\!5)$-tuples of its free variables.\smallskip

\begin{rmk}\label{rem4}  Let $\langle\chi\rangle$ be a quantifier-free part of a  
formula $\chi$. Using the well-known Tarski and Kuratowski algorithm, we obtain 
that the prenex-normal form of $\varphi(k)$ is \ $\forall\,\widehat{u}\,\forall\,
\widehat{w}\,\forall\widehat{h}\,\exists\,\widehat{g}\langle\varphi(k)\rangle$. 
Recall that the variables $\widehat{u}$, $\widehat{w}$, $\widehat{g}$, 
and $\widehat{h}$ are auxiliary and "attached" to the corresponding basic 
variables, and so their lengths are the same as the primary ones (see Subsection 
4.1). The variables $\widehat{g}$ "service" only the variables $\widehat{f}_t$ and 
$\widehat{f}_{t+1}$. Therefore, the tuple $\widehat{g}$ has a length of $r\!+\!1$, 
but not $n\!+\!1$. Moreover, the destiny of the basic variables $\widehat{f}_t$ is 
a "storing" of information about the alphabet symbols, hence,  it is enough to 
consider the case, when the variables $\widehat{g}$ have the value of the 
alphabet's symbols codes. So we can substitute the subformula \ $\langle
\Delta^{cop}(\widehat{u}(\beta))\rangle$ \ with  $$\langle\Delta^{cop}
_1(\widehat{u}(\beta))\rangle\rightleftharpoons\neg(\widehat{w}\!\approx\!
\widehat{u}(\beta))\rightarrow\bigvee_{\delta\in A}(\psi_{t}(\widehat{w}\! 
\rightarrow\!\delta) \ \wedge \ \psi_{t+1}(\widehat{w}\!\rightarrow\!\delta)).$$
 It is clear that \ $|\Delta^{cop}_1(\widehat{u}(\beta))|<|A|\cdot|
\Delta^{cop}(\widehat{u}(\beta))|$. Thus, the formulae $\varphi(k)$ and $\Phi^{(0)}
(P)(\widehat{y}_{t},\widehat{y}_{t+1})$ \ are universal in essence.

But since this replacement does not result in the simplification of the proofs, we 
resume our work with the $\forall\exists$-formulae. \end{rmk}

\begin{lem}\label{trcl} (i) If \ $\widehat{x}_{t}\!\neq\!\widehat{\mu}$, then 
a clause $\psi_{t}(\widehat{\mu}\!\rightarrow\!\varepsilon)$ will be true 
independently of the value of variables $\widehat{f}_{t}$. In particular, a 
quasi-equation, which is contained into the record of 
$\langle\Delta^{cop}(\widehat{u})\rangle$ (this is quantifier-free part of 
$\Delta^{cop}(\widehat{u})$), will be true, if its color is $t$ or $t\!+\!1$, and 
at the same time \ $\widehat{x}_{t}\!\neq\!\widehat{w}$ or \ $\widehat{x}_{t+1}\!
\neq\!\widehat{w}$, respectively. 

(ii) For some constant $D_{1}$, the inequality \\
$|\Phi^{(0)}(P)(\widehat{y}_{t},\widehat{y}_{t+1})|\!\leqslant\!D_{1}\cdot
|c_OP|\cdot|\varphi_{}(N)|$ holds.\end{lem}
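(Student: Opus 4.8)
\emph{Part (i)} is purely propositional, so I would dispose of it first. By definition $\psi_{t}(\widehat{\mu}\!\rightarrow\!\varepsilon)$ is an implication whose antecedent is the conjunction $x_{t,0}\!\approx\!\mu_{0}\wedge\dots\wedge x_{t,n}\!\approx\!\mu_{n}$ and in whose consequent $\widehat{f}_{t}\!\approx\!c\varepsilon$ the variables $\widehat{f}_{t}$ are the only ones occurring. If, under a given assignment of values in the two-element algebra $\mathcal{B}$, one has $\widehat{x}_{t}\!\neq\!\widehat{\mu}$, then some equation $x_{t,i}\!\approx\!\mu_{i}$ is false, hence the antecedent is false and the whole implication is true regardless of $\widehat{f}_{t}$. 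The ``in particular'' assertion is then this observation used twice: under its $\forall\widehat{w}_k/\exists\widehat{g}_k$ prefix the matrix of $\langle\Delta^{cop}(\widehat{u}_k(\beta))\rangle$ is $\neg\widehat{w}_k\!\approx\!\widehat{u}_k(\beta)\rightarrow(\psi_{t}(\widehat{w}_k\!\rightarrow\!\widehat{g}_k)\wedge\psi_{t+1}(\widehat{w}_k\!\rightarrow\!\widehat{g}_k))$, so taking $\widehat{\mu}:=\widehat{w}_k$ and letting $\varepsilon$ be the code tuple named by $\widehat{g}_k$ shows that $\psi_{t}(\widehat{w}_k\!\rightarrow\!\widehat{g}_k)$ is true whenever $\widehat{x}_{t}\!\neq\!\widehat{w}_k$ and that $\psi_{t+1}(\widehat{w}_k\!\rightarrow\!\widehat{g}_k)$ is true whenever $\widehat{x}_{t+1}\!\neq\!\widehat{w}_k$.

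\emph{Part (ii)} is a length estimate. Since $\Phi^{(0)}(P)$ is by definition the conjunction $\bigwedge_{0<k\le N}\varphi(k)$, we have $|\Phi^{(0)}(P)|\le\sum_{k=1}^{N}|\varphi(k)|+\mathcal{O}(N)$, the $\mathcal{O}(N)$ covering the connecting $\wedge$'s and the enclosing brackets. I would bound the two ingredients separately. First, $N$ is the number of instructions of $P$ plus the fixed constant $2|A|$, and since the encoding $c_O$ is composite the code $c_OP$ contains a separate code of each instruction; hence $N\le c\,|c_OP|$ for an absolute constant $c$. Second, all the $\varphi(k)$ are substitution instances of the single syntactic template displayed in Subsection \ref{s4.2}: two of them differ only in the concrete symbol $\alpha\!\in\!A$, in the decimal records of the state numbers $i,j$ and of the instruction index $k$, and in whether $\beta$ denotes a shift $R,L$ or a tape symbol. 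By Lemmata \ref{ll2}, \ref{lcl} and \ref{lphi} this makes the lengths $|\varphi(k)|$ pairwise comparable up to a constant factor once the indices are accounted for, so $|\varphi(k)|\le D'\,|\varphi(N)|$ for a suitable $D'$ --- here one uses that we may list the $N$ instructions so that $M(N)$ is one of maximal formula length, this being the only use made of the particular position $N$. Substituting, $|\Phi^{(0)}(P)|\le D'N\,|\varphi(N)|+\mathcal{O}(N)\le D_{1}\,|c_OP|\cdot|\varphi(N)|$ for a suitable constant $D_{1}$, since $|\varphi(N)|\ge 1$.

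The one point that genuinely needs care is this last uniform comparison $|\varphi(k)|=\mathcal{O}(|\varphi(N)|)$, and I expect it to be the main (though entirely routine) obstacle. One must check that none of the three sources of variation among the $\varphi(k)$ spoils it: the decimal size $\mathcal{O}(\log N)$ of the instruction index and of the state numbers occurs in $\varphi(N)$ as well; and the quadratic blow-up of $(\widehat{u}_k\!\pm\!1)_2$ from Lemma \ref{ll2}(ii), together with the extra factor $n$ attached to shift instructions in Lemma \ref{lphi}(ii), is already contained in $|\varphi(N)|$ once $\varphi(N)$ is taken to come from a longest instruction. Granting that bookkeeping, part (ii) follows by the same kind of direct calculation already used for Lemmata \ref{ll2} and \ref{lcl}.
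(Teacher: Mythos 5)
Your proof is correct and takes essentially the same direct-calculation route as the paper, whose own proof consists of the observations that the clauses' false premises make them true and that $N\cdot\lceil\lg N\rceil\!<\!D_{2}\cdot|c_OP|$ once the program is nonempty. Your explicit care in taking $\varphi(N)$ to come from a longest instruction (so that the factor-$n$ gap between shift and non-shift instructions in Lemma \ref{lphi} is absorbed) and your charging of the index lengths to $|\varphi(N)|$ itself, so that only $N\leqslant c\cdot|c_OP|$ from the composite encoding is needed, are harmless bookkeeping variants of the paper's tacit argument rather than a different approach.
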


\begin{proof} (i) The premises of clauses are false in these cases.

(ii) If the quantity of the program $P$ instructions is not equal to zero, 
i.e., $N\!-\!2|A|\neq\!0$, then $N\cdot\lceil\lg N\rceil\!<\!D_{2}\cdot|c_OP|
$. This implies the assertion of the lemma. \end{proof}

\subsubsection{The configurations and the exponential quantity of steps}\label{s4.3.2}
The formulae $\Phi^{(s+1)}(P)(\widehat{y}_{t},\widehat{y}_{t+e(s+1)})$ 
conform to the actions of machine $P$ over a period of time \ 
$e(s)\!\rightleftharpoons\!\exp(s)$. They are defined by induction:
\begin{eqnarray*}\ \Phi^{(s+1)}(P)\rightleftharpoons\exists\,\widehat{v}\,
\forall\,\widehat{a}\,\forall\,\widehat{b}\;\bigl\{\bigl[(\widehat{y}_{t}\!
\approx\!\widehat{a} \ \wedge \ \widehat{v}\!\approx\!\widehat{b}) \ \vee \
(\widehat{v}\!\approx\!\widehat{a} \ \wedge \
\widehat{b}\!\approx\!\widehat{y}_{t+e(s+1)})\bigr] \to \\  
\rightarrow\Phi^{(s)}(P)(\widehat{a},\widehat{b})\bigr\},\end{eqnarray*} 
where \ $\widehat{v},\widehat{a},\widehat{b}$ are the $(2n\!+\!3r\!+\!5)$-tuples  
of the new auxiliary variables.

Let $L(t)$ be a configuration (maybe unrealizable), which is recorded on the tape 
after step $t$: namely, at the instant $t$, every cell, whose 
number is $(\widehat{\mu})$, contains a symbol $\varepsilon(\widehat{\mu})$; 
the scanned square has the number $(\widehat{\eta})$; and a machine is 
ready to execute an instruction \ $q_{i}\alpha\!\rightarrow\!\ldots$. Then the 
following formula corresponds  to this configuration (we recall that 
$T\!=\!\exp(n)$): \[ \Psi L(t)(\widehat{y}_{t}) \ \rightleftharpoons \
\pi_{t}(\alpha,(i)_{2},\widehat{\eta}) \ \& \
\bigwedge_{0\leqslant\,(\widehat{\mu})_2\leqslant\,T}
\psi_{t}(\widehat{\mu}\!\rightarrow\!\varepsilon(\widehat{\mu})).\]
It has \ $2n\!+\!3r\!+\!5$ \ free variables \
$\widehat{y_{t}}\!=\!\langle\widehat{x}_{t},\widehat{q}_{t},$
$\widehat{z}_{t},d_{t},f_{t}\rangle$.

\section{The simulation of one running step}\label{s5}

We simply associated the formulae, which were constructed earlier, with the 
certain components of programs or with processes. However one cannot assert 
that these formulae simulate something, i.e., they will not always turn true, 
when the events, which are described by them, are real.%

\subsection{Simulating formula}\label{s5.1}
Let us define 
$$\Omega^{(0)}(X,P)(\widehat{y}_{t},\widehat{y}_{t+1})
\rightleftharpoons[\Psi K(t)(\widehat{y}_{t}) \ \& \
\Phi^{(0)}(P)(\widehat{y}_{t},\widehat{y}_{t+1})] \rightarrow \Psi
K(t\!+\!1)(\widehat{y}_{t+1}).$$

We will prove in this section that the sentence $\forall\widehat{y}_t\forall
\widehat{y}_{t+1}\Omega^{(0)}(X,P)(\widehat{y}_{t},\widehat{y}_{t+1})$ is true 
on the Boolean algebra $\mathcal{B}$ if and only if the machine $P$ transforms 
the configuration $K(t)$ into $K(t\!+\!1)$ in one step. So we can say that this 
formula models the machine actions at the step $t\!+\!1$.%

\begin{rmk}\label{rem5}{\rm One can regard that the formula $\Omega^{(0)}(X,P)
(\widehat{y}_{t},\widehat{y}_{t+1})$ is the analog of the Cook's method formula 
$A_{0,m}(\widetilde{U},\widetilde{V})$, which was applied in the proof of 
Theorem 4.3 in \cite{S-M}, here $\widetilde{U}$ and $\widetilde{V}$ are the 
sequences $(u_1,\ldots,u_m)$ and $(v_1,\ldots,v_m)$ of the Boolean variables and 
$m\!=\!q(|X|)$ is the value of suitable polynomial $q$ on the length of input 
$X$. Indeed, the sentence $\exists\widetilde{U}\exists\widetilde{V}A_{0,m}
(\widetilde{U},\widetilde{V})$ is true if and only if the configuration encoded 
by $v_1\ldots v_m$ follows from the configuration that corresponds to $u_1\ldots 
u_m$ in at most one step of the $P$ (these $m$ and $P$ are $n$ and $\mathfrak{M}
$ in \cite{S-M}). 

However, there are solid arguments to believe that the real analog of the 
formula $A_{0,m}(\widetilde{U},\widetilde{V})$ is the $\Phi^{(0)}(P)(\widehat{y}
_{t},\widehat{y}_{t+e(s+1)})$ nevertheless. We will return to the discussion of 
this analogy in Subsection \ref{s8.2}. }\end{rmk}

\subsection{The single-valuedness of modeling and\\ the special values of variables}\label{s5.2}
Let \ $K(t\!+\!1)$ be a configuration that has arisen from a configuration 
$K(t)$ as a result of the machine $P$ action at the step $t\!+\!1$.

\begin{prop}\label{Pr2}
(i) There exist special values of variables $\widehat{y}_{t}$ such that the 
formula \ $\Psi K(t)(\widehat{y}_{t})$ is true, and the truth of \ 
$\Phi^{(0)}(P)(\widehat{y}_{t},\widehat{y}_{t+1})$ follows from the truth of \ 
$\Psi K(t\!+\!1)(\widehat{y}_{t+1})$ for every $\widehat{y}_{t+1}$.

(ii) If a formula $\Omega^{(0)}(X,P)(\widehat{y}_{t},\widehat{y}_{t+1})$ is 
identically true over algebra $\mathcal{B}$, then the machine $P$ cannot 
convert the configuration $K(t)$ into the configuration, which differs from 
$K(t\!+\!1)$, at the step $t+\!1$. \end{prop}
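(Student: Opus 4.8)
The plan is to prove the two parts of Proposition~\ref{Pr2} separately, treating (i) as the construction of a distinguished \emph{canonical} assignment and (ii) as a consequence of (i) combined with the determinism of $P$.

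For part~(i), I would first exhibit the \textbf{canonical values} of the tuples $\widehat{y}_{t}$ that faithfully encode the configuration $K(t)$: set $\widehat{x}_{t}$ and $\widehat{z}_{t}$ to the binary code of the scanned-cell number $(\widehat{\eta})$, set $\widehat{q}_{t}$ to $(i)_{2}$, set $\widehat{d}_{t}$ and $\widehat{f}_{t}$ to $c\alpha$ where $\alpha=\varepsilon(\widehat{\eta})$ is the scanned symbol. One checks directly from the definitions in Subsections~\ref{s4.1}--\ref{s4.3} that, with these values, the timer $\pi_{t}(\alpha,(i)_{2},\widehat{\eta})$ holds and every clause $\psi_{t}(\widehat{\mu}\!\rightarrow\!\varepsilon(\widehat{\mu}))$ holds (for $\widehat{\mu}\neq\widehat{\eta}$ the premise $\widehat{x}_{t}\!\approx\!\widehat{\mu}$ is false and Lemma~\ref{trcl}(i) applies; for $\widehat{\mu}=\widehat{\eta}$ the conclusion $\widehat{f}_{t}\!\approx\!c\varepsilon(\widehat{\eta})$ holds by choice of $\widehat{f}_{t}$); hence $\Psi K(t)(\widehat{y}_{t})$ is true. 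Then I would show: if $\Psi K(t\!+\!1)(\widehat{y}_{t+1})$ holds for an arbitrary assignment to $\widehat{y}_{t+1}$, then $\Phi^{(0)}(P)(\widehat{y}_{t},\widehat{y}_{t+1})$ holds. This amounts to verifying $\varphi(k)$ for every instruction index $k$, with $\widehat{y}_{t}$ fixed to the canonical values. Here the argument splits: if $M(k)=q_{i'}\alpha'\!\rightarrow\!\ldots$ with $(i',\alpha')\neq(i,\varepsilon(\widehat{\eta}))$, the outer timer $\pi_{t}(\alpha',(i')_{2},\widehat{u}_k)$ is false for every value of $\widehat{u}_k$ (since $\widehat{q}_{t}$ or $\widehat{d}_{t}$ disagrees), so $\varphi(k)$ is vacuously true; if $M(k)$ \emph{is} the instruction actually triggered by $K(t)$, then the only $\widehat{u}_k$ making the timer true is $\widehat{u}_k=(\widehat{\eta})_{2}$, and one must check that $\Delta^{cop}$, $\Gamma^{ret}$, $\Delta^{wr}$, and the output $\pi_{t+1}$-formula all follow from $\Psi K(t\!+\!1)(\widehat{y}_{t+1})$ --- using that $K(t\!+\!1)$ is \emph{by hypothesis} the configuration obtained from $K(t)$ by this very instruction, so the cell contents, head position, and state recorded in $\Psi K(t\!+\!1)$ match exactly what $\varphi(k)$ demands.

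For part~(ii), suppose $\Omega^{(0)}(X,P)(\widehat{y}_{t},\widehat{y}_{t+1})$ is identically true over $\mathcal{B}$, and suppose for contradiction that at step $t\!+\!1$ the machine $P$ converts $K(t)$ into some configuration $K'\neq K(t\!+\!1)$. Instantiate $\widehat{y}_{t}$ by the canonical values from part~(i) (so $\Psi K(t)(\widehat{y}_{t})$ is true), and instantiate $\widehat{y}_{t+1}$ by the canonical values encoding $K(t\!+\!1)$ (so $\Psi K(t\!+\!1)(\widehat{y}_{t+1})$ is true, by the part~(i) construction applied at color $t\!+\!1$). By the identical truth of $\Omega^{(0)}$ and the truth of its antecedent we would get $\Phi^{(0)}(P)(\widehat{y}_{t},\widehat{y}_{t+1})$ true --- wait, that direction is the wrong one. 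Instead: instantiate $\widehat{y}_{t}$ canonically for $K(t)$ and instantiate $\widehat{y}_{t+1}$ canonically for the \emph{actual} successor $K'$. Then $\Psi K(t)(\widehat{y}_{t})$ is true, and I claim $\Phi^{(0)}(P)(\widehat{y}_{t},\widehat{y}_{t+1})$ is true: this is exactly the ``forward simulation'' computation --- with $\widehat{y}_{t}$ canonical for $K(t)$ and $\widehat{y}_{t+1}$ canonical for the configuration that $P$ genuinely produces, each $\varphi(k)$ holds by the same case analysis as in~(i) (the triggered instruction's consequents are satisfied precisely because the $\widehat{y}_{t+1}$-values were chosen to record $K'$, which \emph{is} the output of that instruction). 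Hence the antecedent of $\Omega^{(0)}$ is true, so $\Psi K(t\!+\!1)(\widehat{y}_{t+1})$ must be true at these values; but $\widehat{y}_{t+1}$ was chosen to encode $K'\neq K(t\!+\!1)$, and one checks that the canonical encoding of a configuration $K'$ falsifies $\Psi K''$ whenever $K''\neq K'$ (the head-position, state, or some cell-content conjunct fails), giving the contradiction.

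The main obstacle, and where the real work lies, is the detailed verification in part~(i) that $\Phi^{(0)}(P)$ follows from $\Psi K(t\!+\!1)(\widehat{y}_{t+1})$ --- in particular handling the inner universal quantifiers $\forall\widehat{w}_k$ in $\Delta^{cop}$ and $\forall\widehat{h}_k$ in the body of $\varphi(k)$, and the binary increment/decrement formulae $\widehat{u}_k(R),\widehat{u}_k(L)$, which must be shown to compute the right neighbouring cell number under the encoding of Subsection~\ref{s3.3}. One must be careful that $\Delta^{cop}$ ``copies'' exactly the cells $\neq(\widehat{u}_k(\beta))$ and that the single changed cell is correctly overwritten by $\Delta^{wr}$ with the symbol retrieved by $\Gamma^{ret}$; the locality of Turing-machine steps (only one cell changes) is what makes this match $K(t\!+\!1)$. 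A secondary subtlety is the boundary case where $P$ attempts to move left of cell $0$ or right past cell $T$ --- but the conventions fixed in Subsection~\ref{s3.1} (the instruction $q_i\rhd\!\rightarrow\!q_i\rhd$ replaces left-overflow) and the idle-run instructions added at the start of Section~\ref{s4} ensure such configurations behave as stationary steps, so the simulation stays within the $T\!+\!1$-cell zone. All the length bookkeeping has already been discharged by Lemmata~\ref{lcl}, \ref{lphi}, and~\ref{trcl}, so it need not be repeated here.
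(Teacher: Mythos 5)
Your overall architecture (a distinguished assignment of $\widehat{y}_{t}$ making $\Psi K(t)$ true, a case split over triggered versus non-triggered instructions, and a falsification argument for (ii)) is the same as the paper's, but the concrete assignment you choose breaks the proof at two places. First, the ``canonical'' values $\widehat{x}_{t}=\widehat{z}_{t}=\widehat{\eta}$, $\widehat{f}_{t}=\widehat{d}_{t}=c\alpha$ do make $\Psi K(t)$ true, but for a move instruction $M(k)=q_{i}\alpha\!\rightarrow\!q_{j}\beta$ with $\beta\!\in\!\{R,L\}$ they make $\varphi(k)$ false outright, independently of $\widehat{y}_{t+1}$: for the instance $\widehat{u}_k=\widehat{\eta}$ of the outer quantifier the timer is true, while $\Gamma^{ret}(\beta)=\psi_{t}(\widehat{u}_k(\beta)\!\rightarrow\!\widehat{h}_k)$ has the false premise $\widehat{x}_{t}\!\approx\!\widehat{\eta}(\beta)$ and is therefore true for \emph{every} $\widehat{h}_k$; the subformula $\forall\widehat{h}_k\bigl(\Gamma^{ret}(\beta)\rightarrow[\Delta^{wr}(\beta)\ \&\ \pi_{t+1}(\widehat{h}_k,(j)_2,\widehat{u}_k(\beta))]\bigr)$ then demands $\widehat{d}_{t+1}\!\approx\!\widehat{h}_k$ for all $(r\!+\!1)$-tuples $\widehat{h}_k$, which is impossible over $\mathcal{B}$. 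Hence the implication ``$\Psi K(t\!+\!1)(\widehat{y}_{t+1})$ true $\Rightarrow\Phi^{(0)}(P)$ true'' claimed in your part (i) fails, and the same defect kills the ``forward simulation'' step of your part (ii): the antecedent of $\Omega^{(0)}$ is false under your assignment, so no contradiction is produced. The paper's special values differ precisely here: it sets $\widehat{x}_{t}=\widehat{\eta}(\beta)$ (the cell that will be scanned \emph{after} the step) and $\widehat{f}_{t}=c\lambda$ (the symbol sitting there), so that $\Gamma^{ret}(\beta)$ genuinely retrieves and pins $\widehat{h}_k$ to $c\lambda$, while $\Psi K(t)$ stays true because the only clause of color $t$ with a true premise is the one for the cell $\widehat{\eta}(\beta)$.

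Second, even after repairing this, the last step of your part (ii) --- ``the canonical encoding of $K'$ falsifies $\Psi K''$ whenever $K''\neq K'$'' --- is simply not true. These formulae are evaluated pointwise: under an assignment in which $\widehat{x}_{t+1}$ encodes the head cell of $K'$, every clause $\psi_{t+1}(\widehat{\mu}\!\rightarrow\!\rho)$ with $\widehat{\mu}$ different from that cell is vacuously true by Lemma \ref{trcl}(i); so if $K(t\!+\!1)$ and $K'$ differ only in the content of a square away from the head, $\Psi K(t\!+\!1)(\widehat{y}_{t+1})$ is \emph{true} under your assignment and no contradiction ensues. This is exactly why the paper's proof chooses $\widehat{x}_{t+1}$, $\widehat{f}_{t+1}$ (and, in that one case, $\widehat{g}_k$) depending on \emph{where} the incorrect configuration deviates --- aiming them either at the head data and the cell $\widehat{\eta}(\beta)$, or at the single offending cell $\widehat{\mu}$ with $\widehat{f}_{t+1}=c\delta$ --- and then re-verifies that $\Psi K(t)$ and $\Phi^{(0)}(P)$ remain true under this aimed assignment. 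That location-dependent choice, not determinism alone, is the real content of Proposition \ref{Pr2}(ii), and it is missing from your proposal.
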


\begin{proof} We will prove these assertions simultaneously. Namely, we will 
select the values variables $\widehat{y}_{t}$ and $\widehat{y}_{t+1}$ such that a 
formula 
$$\Upsilon_{t+1}(\widehat{y}_{t},\widehat{y}_{t+1})\! \rightleftharpoons\!
[\Psi K(t)(\widehat{y}_{t})\,\&\,\Phi^{(0)}(P)(\widehat{y}_{t},\widehat{y}_{t
+1})]\!\rightarrow\!\Psi L(t+\!1)(\widehat{y}_{t+1})$$ will be false, if the
configuration \ $L(t+\!1)$ differs from the real $K(t+\!1)$. This implies 
Item (ii) of the proposition. However, at the beginning, we will select the 
special values of the variables of the tuple $\widehat{y}_{t}$. After 
that when we pick out the values of the corresponding variables of the color 
$t+\!1$, the formulae \ $\Psi K(t+\!1)(\widehat{y}_{t+1})$  and \ 
$\Phi^{(0)}(P)(\widehat{y}_{t},\widehat{y}_{t+1})$ will become true or false at 
the same time depending on the values of the variables $\widehat{y}_{t+1}$. 

Let \ $M(k)\!=\!q_{i}\alpha\!\rightarrow\ldots$ be an instruction that is
applicable to the configuration $K(t)$; and $(\widehat{\eta})$ be a number 
of the scanned square. We specify \ $\widehat{d}_{t}\!=\!c\alpha,\
\widehat{q}_{t}\!=\!(i)_{2},\ \widehat{z}_{t}\!=\!\widehat{\eta}$. Then
$\pi$-formula $\pi_{t}(\alpha,(i)_{2},\widehat{\eta})$, which is in the record  
of $\Psi K(t)$, is true. 

Let us consider a formula $\varphi(l)$ that conforms to some instruction 
$M(l)\!=\!q_{b}\theta\!\rightarrow\ldots$ that differs from $M(k)$. 
This formula has a timer \ $\pi_{t}(\theta,(b)_{2},\widehat{u})$ as the first 
premise. For the selected values of the variables $\widehat{d}_{t}; 
\widehat{q}_{t};$ and $\widehat{z}_{t}$, the timer takes the form of \ 
$c\alpha\!\approx\!c\theta\wedge(i)_{2}\!\approx\!(b)_{2}\wedge
\widehat{\eta}\!\approx\!\widehat{u}$. It is obvious that if \
$\alpha\!\neq\!\theta$; or \ $i\!\neq b$; or \
$\widehat{u}\!\neq\!\widehat{\eta}$, then this $\pi$-formula will be false, 
and the whole $\varphi(l)$ will be true.

Thus, let $\varphi(k)$ be a formula that correspondents to the instruction \ 
$M(k)\!=\!q_{i}\alpha\!\rightarrow\!q_{j}\beta$; and
$\widehat{u}\!=\!\widehat{\eta}$. Let us define \
$\widehat{d}_{t+1}\!=\!c\lambda; \ \widehat{q}_{t+1}\!=\!(j)_{2};
\ \widehat{z}_{t+1}\!=\!\widehat{\eta}(\beta)$, where
$(\widehat{\eta}(\beta))$ is a number of the square, which will be 
scanned by the machine head after the fulfillment of the instruction $M(k)$; 
and $\lambda$ is the symbol, which the head will see there. For these
$\widehat{u}$ and selected values of \ $\widehat{d}_{t+1},\widehat{q}_{t+1}, 
\widehat{z}_{t+1}$, \ the $\pi$-formula, which enters into the record of 
$\Psi K(t+\!1)$, becomes true. But the conclusion of the quantifier-free part 
$\langle\varphi(k)\rangle$ contains a slightly different timer 
$\pi_{t+1}(\widehat{h},(j)_{2},\widehat{u}(\beta))$; in this timer, the only 
equality\ $\widehat{d}_{t+1}\!\approx\!\widehat{h}$ included in it raises
doubts for the time being.%

Let us assign \ $\widehat{x}_{t}\!=\!\widehat{\eta}(\beta)$. Since we consider  
the case, when $\widehat{u}\!=\!\widehat{\eta}$, the equality $\widehat{u}
(\beta)\!=\!\widehat{\eta}(\beta)$ holds too. Therefore the quasi-equation, of the 
color $t$, which enters into $\langle\Delta^{cop}(\widehat{u}(\beta))\rangle$ 
(this is the quantifier-free part of $\Delta^{cop}(\widehat{u}(\beta))$), is true 
for all $\widehat{w}\!\neq\!\widehat{\eta}(\beta)$ and irrespective of the values 
of the tuples $\widehat{f}_{t}$ and $\widehat{g}$ according to Lemma \ref{trcl}(i). 
For the same reason, all the clauses that are included in $\Psi K(t)$ are true, 
except \ $\psi_{t}(\widehat{\eta}(\beta)\rightarrow\!\lambda)$ for 
$\beta\!\in\!\{R,L\}$ or $\psi_{t}(\widehat{\eta}\!\rightarrow\!\alpha)$ for 
$\beta\!\in\!A$. We set the value of the tuple $\widehat{f}_{t}$ as 
$c\lambda$, if $\beta\!\in\!\{R,L\}$, or as $c\alpha$, if not. Now, the 
questionable clause from $\Psi K(t)$ becomes true, because its premise and 
conclusion are true.%

If \ $\widehat{h}\!\neq\!c\lambda$, then the formula \ $\Gamma^{ret}(\beta)$ 
will be false, since it is either \
$\psi_{t}(\widehat{\eta}(\beta)\!\rightarrow\!\widehat{h})$ for \
$\beta\!\in\!\{R,L\}$, or $\widehat{h}\!\approx\!c\beta$ for \ $\beta\!\in
\!A$. Hence the whole formula $\langle\varphi(k)\rangle$ will be true in this 
case. When $\widehat{h}\!=\!c\lambda$, the terminal $\pi$-formula in 
$\langle\varphi(k)\rangle$ becomes true, as $\widehat{d}_{t+1}\!=\!c\lambda$.

If the "incorrect" formula $\Psi L(t+\!1)$ has a mistake in the record of 
timer or clause $\psi_{t+1}(\widehat{\eta}(\beta)\rightarrow\!\lambda)$, we 
will define \ $\widehat{x}_{t+1}\!=\!\widehat{\eta}(\beta)$ \ and \ 
$\widehat{f}_{t+1}\!=\!c\lambda$ (we recall that $\lambda\!=\!\beta$ for 
$\beta\!\in\!A$). But when these fragments are that as they should be, however, 
there is another "incorrect" clause $\psi_{t+1}(\widehat{\mu}\!\rightarrow\!
\rho)$, where $\rho$ is different from "real" $\delta$, we will assign 
$\widehat{x}_{t+1}\!=\!\widehat{\mu}$ and \ $\widehat{f}_{t+1}\!=\!\widehat{g}\!
=\!c\delta$.\footnote{We note that this is the only case when we need to set 
the values of the variables $\widehat{g}$.} We obtain again that all the 
quasi-equations of the color $t\!+\!1$ in the formulae $\langle
\Delta^{cop}(\widehat{u}(\beta))\rangle$ and $\Delta^{wr}(\beta)$ are true in 
both of these cases on the grounds of Lemma \ref{trcl}(i) or because their 
premises and conclusions are true. Therefore the whole formula $\langle
\varphi(k)\rangle$ is true. All the clauses contained in $\Psi K(t\!+\!1)$ are 
true for the same reasons.

We obtain as a result that any formula $\varphi(l)$ is true for the above 
selected values of the primary variables, so the entire conjunction 
$\Phi^{(0)}(P)$ is true. Since the premise and conclusion of \
$\Omega^{(0)}(X,P)(\widehat{y}_{t},\widehat{y}_{t+1})$ are true, and the
configurations $K(t+\!1)$ and $L(t+\!1)$ are different; the "incorrect"
formula $\Upsilon_{t+1}$ is false. 

In view of the fact that the configuration $L(t\!+\!1)$ may differ from the real 
$K(t+\!1)$ in any place, Item (i) of Proposition is established too. \end{proof}

\subsection{The sufficiency of modeling}\label{s5.3}
We will now prove a converse to Proposition \ref{Pr2}(ii).

\begin{prop}\label{Pr3} Let \ $K(t\!+\!1)$ be a configuration that has arisen
from a configuration $K(t)$ as a result of an action of the machine $P$ at 
the step $t\!+\!1$. Then the formula
$\Omega^{(0)}(X,P)(\widehat{y}_{t},\widehat{y}_{t+1})$ is identically true 
on algebra $\mathcal{B}$. \end{prop}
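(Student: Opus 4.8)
The plan is to fix an arbitrary Boolean valuation of the free variables $\widehat{y}_{t},\widehat{y}_{t+1}$, assume it satisfies the antecedent $\Psi K(t)(\widehat{y}_{t})\,\&\,\Phi^{(0)}(P)(\widehat{y}_{t},\widehat{y}_{t+1})$, and derive that it also satisfies the consequent $\Psi K(t\!+\!1)(\widehat{y}_{t+1})$; this is exactly the converse of Proposition \ref{Pr2}(ii), and it will reuse Lemma \ref{trcl}(i) throughout. First I would read off from the truth of $\Psi K(t)$ that its leading $\pi$-formula pins $\widehat{d}_{t},\widehat{q}_{t},\widehat{z}_{t}$ to the codes of the scanned symbol $\alpha$, of the state $q_i$, and of the scanned-cell number $\widehat{\eta}$ of the configuration $K(t)$, while each clause $\psi_{t}(\widehat{\mu}\!\rightarrow\!\varepsilon(\widehat{\mu}))$ records that, whenever the query pointer $\widehat{x}_{t}$ names cell $\widehat{\mu}$, the tuple $\widehat{f}_{t}$ carries the code of $\varepsilon(\widehat{\mu})$.

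Next I would single out, among the conjuncts $\varphi(l)$ of $\Phi^{(0)}(P)$, the one indexed by the unique instruction $M(k)\!=\!q_i\alpha\!\rightarrow\!q_j\beta$ applicable to $K(t)$ (uniqueness being guaranteed by determinism together with the idle-run padding of Section \ref{s4} and the elimination of hanging states). For $l\!\neq\!k$ the inner timer of $\varphi(l)$ is falsified by the values fixed above (wrong state or wrong scanned symbol), so $\varphi(l)$ holds vacuously; for $l\!=\!k$, instantiating the bound tuple $\widehat{u}_k$ by $\widehat{z}_{t}\!=\!\widehat{\eta}$ makes the timer $\pi_{t}(\alpha,(i)_{2},\widehat{u}_k)$ true, so the body of $\varphi(k)$ must hold, yielding $\Delta^{cop}(\widehat{\eta}(\beta))$ and $\forall\widehat{h}_k(\Gamma^{ret}(\beta)\!\rightarrow\![\Delta^{wr}(\beta)\,\&\,\pi_{t+1}(\widehat{h}_k,(j)_{2},\widehat{\eta}(\beta))])$. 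From $\Gamma^{ret}(\beta)$ I would extract the code $\widehat{h}_k$ of the symbol that will be scanned after the step --- it is $c\beta$ when $\beta\!\in\!A$ and the color-$t$ content of cell $\widehat{\eta}(\beta)$ when $\beta\!\in\!\{R,L\}$ --- after which $\Delta^{wr}(\beta)$ forces $\widehat{f}_{t+1}$ to that code whenever $\widehat{x}_{t+1}$ names $\widehat{\eta}(\beta)$, and the terminal $\pi$-formula forces $\widehat{d}_{t+1},\widehat{q}_{t+1},\widehat{z}_{t+1}$ to the codes of the new scanned symbol, the new state $q_j$, and the new cell $\widehat{\eta}(\beta)$. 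This already gives the leading $\pi$-formula of $\Psi K(t\!+\!1)$ and its ``changed-cell'' clause.

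It remains to obtain the clauses $\psi_{t+1}(\widehat{\mu}\!\rightarrow\!\varepsilon'(\widehat{\mu}))$ of $\Psi K(t\!+\!1)$ for cells $\widehat{\mu}\!\neq\!\widehat{\eta}(\beta)$, for which $\varepsilon'(\widehat{\mu})\!=\!\varepsilon(\widehat{\mu})$ since at step $t\!+\!1$ only the cell $\widehat{\eta}(\beta)$ can change its contents (and only when $\beta\!\in\!A$). Here I would use the instance $\widehat{w}_k:=\widehat{\mu}$ of $\Delta^{cop}(\widehat{\eta}(\beta))$: it produces a witness $\widehat{g}_k$ with $\psi_{t}(\widehat{\mu}\!\rightarrow\!\widehat{g}_k)\wedge\psi_{t+1}(\widehat{\mu}\!\rightarrow\!\widehat{g}_k)$, and combining this conjunction with the color-$t$ clause of $\Psi K(t)$ for the same cell should tie $\widehat{g}_k$, hence $\widehat{f}_{t+1}$, to the code of $\varepsilon(\widehat{\mu})$ whenever $\widehat{x}_{t+1}$ names $\widehat{\mu}$; when it does not, the clause is vacuous by Lemma \ref{trcl}(i). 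Assembling all the conjuncts then yields $\Psi K(t\!+\!1)$.

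I expect the last step to be the main obstacle. Because the modeling formulae carry only one query pointer per color rather than the whole tape, the ``copying'' argument has to show that a valuation already satisfying the antecedent is coherent enough --- its pointers $\widehat{x}_{t},\widehat{x}_{t+1}$ and symbol tuples $\widehat{f}_{t},\widehat{f}_{t+1}$ mutually consistent through the existential witnesses $\widehat{g}_k$ --- for the color-$(t\!+\!1)$ clauses to follow for every admissible value of $\widehat{x}_{t+1}$. In particular the move case $\beta\!\in\!\{R,L\}$, in which $\Gamma^{ret}$ genuinely transports the scanned symbol between the two colors, will have to be treated separately from the write case $\beta\!\in\!A$, and the bookkeeping of which clauses are vacuously true must be carried out with care.
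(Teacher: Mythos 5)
Your route is the same as the paper's: assume a valuation satisfying $\Psi K(t)\,\&\,\Phi^{(0)}(P)$, instantiate the conjunct $\varphi(k)$ of the applicable instruction at $\widehat{u}_k:=\widehat{\eta}$ (its timer matches the timer of $\Psi K(t)$), read off the new timer and the clause for the cell $\widehat{\eta}(\beta)$ from $\Gamma^{ret}$, $\Delta^{wr}$ and the concluding $\pi$-formula, and recover the clauses for the untouched cells from $\Delta^{cop}(\widehat{\eta}(\beta))$ combined with the colour-$t$ clauses of $\Psi K(t)$. Everything up to the copying step is fine (instantiating $\widehat{h}_k$ at the code of the new scanned symbol makes $\Gamma^{ret}$ true whether or not $\widehat{x}_t$ names $\widehat{\eta}(\beta)$, so that part goes through), and your remark that the other conjuncts $\varphi(l)$ are vacuous is harmless but unnecessary in this direction, since all conjuncts are assumed true.

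The obstacle you flag at the end is, however, a genuine gap, and as you describe it the step does not close. The instance $\widehat{w}_k:=\widehat{\mu}$ of $\Delta^{cop}$ only yields \emph{some} witness $\widehat{g}_k$ with $\psi_{t}(\widehat{\mu}\!\rightarrow\!\widehat{g}_k)\wedge\psi_{t+1}(\widehat{\mu}\!\rightarrow\!\widehat{g}_k)$; the colour-$t$ conjunct, and likewise the clause $\psi_{t}(\widehat{\mu}\!\rightarrow\!\varepsilon(\widehat{\mu}))$ of $\Psi K(t)$, pin $\widehat{g}_k$ to $c\varepsilon(\widehat{\mu})$ only when $\widehat{x}_{t}$ (not $\widehat{x}_{t+1}$) names $\widehat{\mu}$. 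Nothing in the antecedent forces the two pointers to coincide: $\widehat{x}_{t+1}$ occurs only in premises of clauses, so one may take $\widehat{x}_{t+1}$ naming a cell $\widehat{\mu}\neq\widehat{\eta}(\beta)$ with $\widehat{x}_{t}\neq\widehat{\mu}$, set $\widehat{f}_{t+1}$ to a wrong code, satisfy every instance of $\Delta^{cop}$ with the witness $\widehat{g}_k:=\widehat{f}_{t+1}$, keep $\widehat{d}_{t+1},\widehat{q}_{t+1},\widehat{z}_{t+1}$ correct so that $\Gamma^{ret}$, $\Delta^{wr}$ and $\pi_{t+1}$ hold, and thereby make the antecedent true while the clause $\psi_{t+1}(\widehat{\mu}\!\rightarrow\!\varepsilon(\widehat{\mu}))$ of $\Psi K(t\!+\!1)$ fails. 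So the ``coherence through the witnesses'' you hope for cannot be derived; some additional mechanism would be needed. You should also know that the paper's own proof of this proposition passes over exactly this point: it substitutes for $\widehat{g}_k$ the unique value occurring in the corresponding clause of $\Psi K(t)$, which is legitimate only when $\widehat{x}_{t}$ actually names the cell in question. Thus your unresolved step is not something you merely failed to reproduce; it is the thinnest point of the published argument itself.
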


\begin{proof} Let \ $M(k)\!=\!q_{i}\alpha\!\rightarrow\!q_{j}\beta$ be the 
instruction that transforms the configuration $K(t)$ into $K(t\!+\!1)$; 
and $\varphi(k)(\widehat{y}_{t},\widehat{y}_{t+1})$ be a formula, which is 
written for this instruction. This formula is the consequence of \ 
$\Phi^{(0)}(P)(\widehat{y}_{t},\widehat{y}_{t+1})$.

Let us replace $\varphi(k)$ by a conjunction of formulae
$\varphi(k)(\widehat{\mu})$, they are each obtained as the result of the
substitution of the various values of the universal variables $\widehat{u}$ in place 
of the variables themselves. Every formula $\varphi(k)(\widehat{\mu})$ contains 
the premise \
$\widehat{d}_{t}\!\approx\!c\alpha\wedge\widehat{q_{t}}\!\approx\!(i)_{2}
\wedge\widehat{z}_{t}\!\approx\!\widehat{\mu}$, one of them coincides with
the only timer \ $\pi_{t}(\alpha,\widehat{\delta},\widehat{\eta})$ included 
in $\Psi K(t)$ for \ $\widehat{u}\!=\!\widehat{\mu}\!=\!\widehat{\eta}$ 
and \ $i\!=\!(\widehat{\delta})$, as the instruction $M(k)$ is applicable 
to the configuration $K(t)$. Therefore the formula \quad 
$\Psi K(t) \ \& \ \Delta^{cop}(\widehat{\eta}(\beta)) \ \& \ 
\forall\,\widehat{h}\{\Gamma^{ret}(\beta)(\widehat{\eta})
\rightarrow[\Delta^{wr}(\beta)(\widehat{\eta}) \ \& \
\pi_{t+1}((\widehat{h},(j)_2,$ $\widehat{\eta}(\beta))]\}$  follows from  
$\Psi K(t)$ and $\varphi(k)(\widehat{\eta})$.

The formula $\Delta^{cop}(\widehat{\eta}(\beta))$ begins with the quantifiers 
$\forall\,\widehat{w}$. Let us replace this formula wits a conjunction that is  
equivalent to it, we substitute all possible values for the variables 
$\widehat{w}$ to this effect. For every value of $\widehat{w}$, there is a 
unique value of the tuple $\widehat{g}$ such that the clause \ 
$\psi_{t}(\widehat{w}\!\rightarrow\!\widehat{g}_{k})$ enters into the 
formula $\Psi K(t)$. When these values of $\widehat{g}$ are substituted 
in their places, we will obtain all the quasi-equations of $\Psi K(t\!+\!1)$, 
except one.

For the appropriate value of $\widehat{h}$, either the formula 
$\Gamma^{ret}(\beta)(\widehat{\eta})$ coincides with some clause existing 
in $\Psi K(t)$, or it becomes true: $\widehat{h}\!\approx\!c\beta$, owing to 
the instruction $M(k)$ applicabi\-lity to the configuration $K(t)$. In any 
case, the formula \ $\Delta^{wr}(\beta)(\widehat{\eta})$ in an explicit form 
contains the quasi-equation \
$\psi_{t+1}(\widehat{\eta}(\beta)\!\rightarrow\ldots)$, which is missing in  \ 
$\Psi K(t\!+\!1)$ so far; and the tuple $\widehat{h}$ obtains the concrete value. 
If we substitute this value in the concluding $\pi$-formula of $\varphi(k)$, then 
we will obtain the necessary timer \
$\pi_{t+1}(\widehat{h},(j)_2,\widehat{\eta}(\beta))$ \ in 
$\Psi K(t\!+\!1)$. \end{proof} 

\section{The construction of the formula $\Omega(X,P)$}\label{s6}

\subsection{The simulation of the exponential computations}\label{s6.1} 
Let us define the formulae \
$\Omega^{(s)}(X,P)(\widehat{y}_{t},\widehat{y}_{t+e(s)})$ that model 
$e(s)\!\rightleftharpoons\!\exp(s)$ running steps of a machine $P$, when 
it applies to a configuration $K(t)$:
\begin{eqnarray*}\Omega^{(s)}(X,P)(\widehat{y}_{t},\widehat{y}_{t+e(s)}) \ 
\rightleftharpoons\ [\Psi K(t)(\widehat{y}_{t}) \ \& \ \Phi^{(s)}(P)
(\widehat{y}_{t},\widehat{y}_{t+e(s)})] \ \to\\ \to\Psi K(t\!+\!e(s))(\widehat{y}
_{t+e(s)}).\end{eqnarray*}

\begin{prop}\label{Pr4} Let $t,s\!\geqslant\!0$ be the integers such that 
\ $t\!+\!e(s)\!\leqslant\!T$.%

(i) If the machine $P$ transforms the configuration $K(t)$ into $K(t\!+\!e(s))$ 
within $e(s)$ steps, then there are special values of variables $\widehat{y}_{t}$ 
such that the formula \ $\Psi K(t)(\widehat{y}_{t})$ \ is true; and for all 
$\widehat{y}_{t+e(s)}$, whenever the formula\break \ $\Psi K(t\!+\!e(s))
(\widehat{y}_{t+e(s)})$ is true, \ $\Phi^{(s)}(P)(\widehat{y}_{t},\widehat{y}_{t
+e(s)})$ is also true. %

(ii) The formula \
$\Omega^{(s)}(X,P)(\widehat{y}_{t},\widehat{y}_{t+e(s)})$ is identically 
true over the Boolean algebra $\mathcal{B}$ if and only if the machine $P$ 
converts the configuration $K(t)$ into $K(t\!+\!e(s))$ within $e(s)$ steps. 
\end{prop}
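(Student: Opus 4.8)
The plan is to argue by induction on $s$, establishing (i) and (ii) simultaneously, because the ``special'' assignments delivered by~(i) are exactly what drives the nontrivial half of~(ii), just as in the proof of Proposition~\ref{Pr2}. The base case $s=0$ is already at hand: since $e(0)=1$, part~(i) for $s=0$ is Proposition~\ref{Pr2}(i), and part~(ii) for $s=0$ is Proposition~\ref{Pr3} together with Proposition~\ref{Pr2}(ii) (the machine carries the idle-run instructions and has no hanging states, so from $K(t)$ it always performs a step and hence does convert $K(t)$ into $K(t+1)$).

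For the inductive step I would exploit the doubling built into the definition of $\Phi^{(s+1)}$, namely $e(s+1)=2e(s)$; the hypothesis $t+e(s+1)\leqslant T$ then makes the induction hypothesis available both for the pair $K(t),K(t+e(s))$ at time $t$ and for the pair $K(t+e(s)),K(t+e(s+1))$ at time $t+e(s)$. For~(i): a run of $P$ carrying $K(t)$ to $K(t+e(s+1))$ in $e(s+1)$ steps splits at its midpoint into two runs of $e(s)$ steps; applying the induction hypothesis for~(i) to the first run yields special values $\widehat{c}$ for $\widehat{y}_t$ with $\Psi K(t)(\widehat{c})$ true and $\Phi^{(s)}(P)(\widehat{c},\widehat{y}_{t+e(s)})$ true whenever $\Psi K(t+e(s))(\widehat{y}_{t+e(s)})$ is, and to the second run yields special values $\widehat{c}\,'$ for $\widehat{y}_{t+e(s)}$ with the analogous property one level up. I would then take $\widehat{c}$ as the special values of $\widehat{y}_t$ required for $\Phi^{(s+1)}$, and $\widehat{v}:=\widehat{c}\,'$ as the witness for the leading $\exists\,\widehat{v}$ of $\Phi^{(s+1)}$; the universal part of $\Phi^{(s+1)}$ is triggered only by $(\widehat{a},\widehat{b})=(\widehat{c},\widehat{c}\,')$ and by $(\widehat{a},\widehat{b})=(\widehat{c}\,',\widehat{y}_{t+e(s+1)})$, and in each case $\Phi^{(s)}(P)(\widehat{a},\widehat{b})$ follows from one of the two applications of the induction hypothesis --- in the first case because $\Psi K(t+e(s))(\widehat{c}\,')$ is true, in the second because $\Psi K(t+e(s+1))(\widehat{y}_{t+e(s+1)})$ is, which is exactly the standing assumption of~(i).

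For the ``if'' part of~(ii): assuming $P$ converts $K(t)$ into $K(t+e(s+1))$ within $e(s+1)$ steps, fix any assignment making $\Psi K(t)(\widehat{y}_t)\,\&\,\Phi^{(s+1)}(P)(\widehat{y}_t,\widehat{y}_{t+e(s+1)})$ true, extract the existential witness $\widehat{v}$ of $\Phi^{(s+1)}$, and instantiate its universal part at the two triggering assignments to obtain that $\Phi^{(s)}(P)(\widehat{y}_t,\widehat{v})$ and $\Phi^{(s)}(P)(\widehat{v},\widehat{y}_{t+e(s+1)})$ are both true. Since $P$ carries $K(t)$ to $K(t+e(s))$ in $e(s)$ steps, the induction hypothesis for~(ii) gives that $\Omega^{(s)}(X,P)(\widehat{y}_t,\widehat{v})$ is identically true, whence $\Psi K(t+e(s))(\widehat{v})$ holds; a second application of the induction hypothesis, now at time $t+e(s)$, yields $\Psi K(t+e(s+1))(\widehat{y}_{t+e(s+1)})$, which is the conclusion that $\Omega^{(s+1)}(X,P)$ demands. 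The ``only if'' part is immediate for the actual configurations produced by the deterministic, never-halting machine; in the refined form where the target is an arbitrary, possibly unrealizable $L(t+e(s+1))$, one repeats the argument of Proposition~\ref{Pr2}(ii): put $\widehat{y}_t=\widehat{c}$ from part~(i), choose for $\widehat{y}_{t+e(s+1)}$ the canonical encoding of the true $K(t+e(s+1))$ so that $\Psi K(t)$ and $\Phi^{(s+1)}(P)$ are true, and note that $\Psi L(t+e(s+1))$ then fails at the cell, or inside the $\pi$-formula, where $L$ and $K$ diverge, using Lemma~\ref{trcl}(i).

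The step I expect to be the genuine obstacle is the matching of the special values across the midpoint in part~(i): the witness for $\exists\,\widehat{v}$ in $\Phi^{(s+1)}$ must be precisely the special encoding $\widehat{c}\,'$ of $K(t+e(s))$ furnished by the induction hypothesis for the \emph{second} half, not some arbitrary encoding of that configuration, for otherwise the first triggering case $\Phi^{(s)}(P)(\widehat{c},\widehat{c}\,')$ is not covered by the first half's hypothesis. Once this interlocking of the ``special'' assignments of the two halves is in place, the rest reduces to the same propositional bookkeeping already performed in Sections~\ref{s4}--\ref{s5}.
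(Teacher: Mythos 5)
Your proof is correct and follows essentially the same route as the paper's: induction on $s$ with the base case supplied by Propositions~\ref{Pr2} and~\ref{Pr3}, the unwinding of $\Phi^{(s+1)}$ into the existence of a midpoint tuple $\widehat{v}$ satisfying $\Phi^{(s)}$ on both halves (the paper's formula $\Xi_{s+1}$), and the interlocked use of items (i) and (ii) at the two half-runs at times $t$ and $t\!+\!e(s)$; your explicit requirement that the witness for $\exists\,\widehat{v}$ be the special encoding of $K(t\!+\!e(s))$ furnished by the second half's item (i) is exactly the detail the paper compresses into its closing sentence about choosing $\widehat{v}_{1}$ ``in a special manner.'' The only cosmetic divergence is in the ``only if'' half of (ii), where you construct the falsifying assignment directly at level $s\!+\!1$ from the just-proven item (i) together with a Proposition~\ref{Pr2}-style choice of $\widehat{y}_{t+e(s+1)}$ aimed at the cell or $\pi$-component where $L$ and $K$ differ, while the paper pulls the counterexample up from the inductive hypothesis at level $s$ --- both are sound and rest on the same interlocking of (i) and (ii).
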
%

\begin{proof}
Induction on the parameter $s$. For $s\!=\!0$, Item (i) is a consequence of 
Proposition~\ref{Pr2}(i), and Item (ii) follows from Propositions~\ref{Pr2}(ii) 
and~\ref{Pr3}.

We start the proof of the inductive step by rewriting the formula \break
$\Phi^{(s+1)}(P)(\widehat{y}_{t},$ $\widehat{y}_{t+e(s+1)})$ in the equivalent, 
but longer form:
\begin{eqnarray*}\exists\,\widehat{v}\bigl\{\forall\,\widehat{a}\,\forall\,
\widehat{b} \bigl[(\widehat{y}_{t}\!\approx\!\widehat{a} \ \wedge \
\widehat{v}\!\approx\!\widehat{b}) \ \rightarrow \ \Phi^{(s)}(P)(\widehat{a},
\widehat{b}) \bigr] \ \& \\ \& \ \forall\,\widehat{a}\,\forall\,\widehat{b}
\bigl[(\widehat{v}\!\approx\!\widehat{a} \ \wedge \
\widehat{b}\!\approx\!\widehat{y}_{t+e(s+1)}) \ \rightarrow \ \
\Phi^{(s)}(P)(\widehat{a},\widehat{b})\bigr]\bigr\}.\end{eqnarray*}
The following formula results from this immediately:
$$\Xi_{s+1}\rightleftharpoons\exists\,\widehat{v}\bigl\{\Phi^{(s)}(P)
(\widehat{y}_{t},\widehat{v})
\ \&\Phi^{(s)}(P)(\widehat{v},\widehat{y}_{t+e(s+1)})\bigr\}.$$
 On the other hand, each of the two implications which are included in the 
equi\-va\-lent long form of the formula \
$\Phi^{(s+1)}(P)(\widehat{y}_{t},\widehat{y}_{t+e(s+1)})$ can be false 
only when the equalities existing in its premise are valid. Hence this long 
formula is equivalent to $\Xi_{s+1}$.

Let the machine $P$ transforms the configuration $K(t)$ into $K(t+\!e(s))$ within 
$e(s)$ steps, and it converts the latter into $K(t\!+\!e(s\!+\!1))$ within the 
same time.

By the inductive hypothesis of Item (ii) (we recall that the induction is  
carried out over a single parameter $s$), the formula \ $\Omega^{(s)}(P)
(\widehat{y}_{t},\widehat{y}_{t+e(s)})$ is identically true \ for any $t$ such
that $t\!+\!e(s)\!\leqslant\!T$, and hence it is identically true for an 
arbitrarily chosen $t$ and for \ $t_1\!=\!t\!+\!e(s)$ provided that \
$t\!+\!e(s\!+\!1)\!=\!t_1\!+\!e(s)\!\leqslant\!T$. Thus, the formulae
\begin{eqnarray*} [\Psi K(t)(\widehat{y}_{t}) \
\& \ \Phi^{(s)}(P)(\widehat{y}_{t},\widehat{y}_{t+\!e(s)})]\!\rightarrow\!\Psi 
K(t\!+\!e(s))(\widehat{y}_{t+e(s)})\ \quad \textrm{and} \\ 
\{\Psi K(t\!+\!e(s))(\widehat{y}_{t+e(s)}) \& \Phi^{(s)}(P)(\widehat{y}_{t
+e(s)},\widehat{y}_{t+e(s+1)})\}\!\rightarrow\!\Psi K(t\!+\!e(s\!+\!1))
(\widehat{y}_{t+e(s+1)}) \end{eqnarray*}
are identically true. Therefore, when we change the variables under the sign 
of the quantifier, we obtain from this that the following formula   
\begin{eqnarray*}\forall\widehat{v}\{[\Psi K(t)(\widehat{y}_{t}) \ \& \ \Phi^{(s)}
(P)(\widehat{y}_{t},\widehat{v}) \ \& \ \Phi^{(s)}(P)(\widehat{v},
\widehat{y}_{t+e(s+1)})]\rightarrow\\ \to \ \Psi K(t\!+\!e(s\!+\!1))(\widehat{y}
_{t+e(s+1)})\},\end{eqnarray*}   
is identically true as well. This formula is equivalent to \
$[(\Psi K(t) \ \& \ \Xi_{s+1})\!\rightarrow\!\Psi K(t\!+\!e(s\!+\!1))](\widehat{y}
_{t},\widehat{y}_{t+e(s+1)})$, because the universal quantifiers  $\forall
\widehat{v}$ will be interchanged with the ones of existence  $\exists\widehat{v}$, 
when they are introduced into the premise of the implication. Since 
the premise of the formula \break $\Omega^{(s+1)}(X,P)(\widehat{y}_{t},\widehat{y}
_{t+e(s+1)})$  is equivalent to $\Psi K(t) \ \& \ \Xi_{s+1}$ in accordance with the 
foregoing argument, the inductive step of Item~(ii) is proven in one direction.%

Now let the configurations \ $L(t+\!e(s+\!1))$ and \ $K(t+\!e(s+\!1))$ be 
different. For some \ $\widehat{v}_{1},\widehat{y}_{t+e(s+1)}$, the formula 
\ $$\{[\Psi K(t\!+\!e(s)) \ \& \ \Phi^{(s)}(P)]\rightarrow\Psi
K(t+\!e(s+\!1))\}(\widehat{v}_{1},\widehat{y}_{t+e(s+1)})$$ \ is true, but \ 
$$\{[\Psi K(t\!+\!e(s)) \ \& \ \Phi^{(s)}(P)]\rightarrow
\Psi L(t+e(s+\!1))\}(\widehat{v}_{1},\widehat{y}_{t+e(s+1)})$$ \ 
is false by the inductive assumption of Item (ii). Therefore, the 
conclusion of the se\-cond formula is false, and its premise is true, i.e., \ 
$\Psi L(t+e(s\!+\!1))(\widehat{y}_{t+e(s+1)})$ is false, but \
$\Phi^{(s)}(P)(\widehat{v}_{1},\widehat{y}_{t+e(s+1)})$ \ and \ 
$\Psi K(t+e(s))(\widehat{v}_{1})$ are true. Since the last formula and 
$\Psi K(t)(\widehat{v}_{0})$ are true for some special $\widehat{v}_{0}$, 
which exists due to induction proposition of Item~(i), the formula \
$\Phi^{(s)}(P)(\widehat{v}_{0},\widehat{v}_{1})$ is true. Thus, the 
implication \ $$\{[\Psi K(t) \ \& \ \Phi^{(s+1)}(P)]\rightarrow\Psi
L(t\!+\!e(s\!+\!1))\}(\widehat{v}_{0},\widehat{y}_{t+e(s+1)})$$
\ has a true premise, and a false conclusion, therefore it is not 
identically true. Item~(ii) is proven.

Inasmuch as the configuration $L(t\!+\!e(s+\!1))$ may differ from the current 
one at any position, to finish the proof of Item~(i) we set the values of 
the variables $\widehat{v}_{1}$ in a special manner, using the inductive 
hypothesis. \end{proof}%

\subsection{The short recording of the initial configuration and\\ the condition of the successful termination of the machine run} \label{s6.2}
Since we have the instructions for the machine run in the idle mode (see the 
beginning of Section~\ref{s4}), the statement that the machine $P$ accepts an 
input string $X$ within \ $T\!=\!\exp(n)$ steps can be written rather brief 
--- by means of one quantifier-free formula of the color $T$: \
$\chi(\omega)\rightleftharpoons\widehat{q}\,_{T}\!\approx\!(1)_2$.
This formula has a length of $4r\!+\!3$ symbols nonmetering the indices. The 
writing of the first index $T$ occupies $\lfloor\lg T\rfloor\!+\!1$ digits 
(indices are written in decimal notation, not in binary), where \ 
$\lg m\!=\!\log_{10}m$, $\lfloor y\rfloor$ is the integer part of a number $y$. 
The maximum length of the second indices is $\lfloor\lg r\rfloor\!+\!1$, and so 
we have \ $|\chi(\omega)|\!<\!(4r\!+\!3)\cdot
(\lfloor\lg T\rfloor\!+\!\lfloor\lg r\rfloor\!+\!2)$.

The formula $\Psi K(t)$ was introduced in Subsection~\ref{s4.3.2} to describe 
a confi\-gu\-ration arising after the step $t$. It is very long --- much 
longer than $n\cdot\exp(n)$. However, the initial configuration consists of 
the input string $X$, which occupies the $|X|$ squares to the right of the 
edge of a tape; the head points to this extreme left cell; and the remaining 
part of the tape is empty, starting with the cell, whose number is \ 
$|X|\!+\!1\!=\!(\widehat{\gamma})\!+\!1$. Therefore one can describe the 
initial tape configuration $K(0)$ by a brief universal formula:
$$\chi(0)\rightleftharpoons\pi_{0}(\rhd,\widehat{0},\widehat{0}) \ \&
\bigwedge_{0\leqslant(\widehat{\eta})_{2}\!\leqslant|X|}\psi_{0}
(\widehat{\eta}\!\rightarrow\!\alpha(\eta)) \ \& \ \forall\widehat{u}_{0}
[(\widehat{u}_{0}\!>\!\widehat{\gamma})\rightarrow
\psi_{0}(\widehat{u}_{0}\!\rightarrow\!\Lambda)],$$
 where $\Lambda$ denotes blank symbol; $\rhd$ is a sign of the left end of the 
tape; and $\alpha(\eta)$ is a symbol, which is located in the number \ 
$(\widehat{\eta})$ cell; and the $\pi$-formula of the color 0 signifies 
that a mechanism is ready for the execution of the instruction \ 
$q_{0}\rhd\!\rightarrow\ldots$ at the zeroth instant, and the machine head is
positioned on the extreme left square of tape and scans $\rhd$ symbol.

\begin{lem}\label{lICon} (i) The formulae $\chi(0)$ and $\Psi K(0)$ are 
equivalent to each other.

(ii) $|\chi(0)(\widehat{y}_{0})|\!\leq\!D_{2}\cdot|X|\cdot
|\psi_{0}(\widehat{u}_{0}\!\rightarrow\!\Lambda)|$ for a proper constant 
$D_{2}$. \end{lem}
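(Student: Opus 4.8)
The plan is to prove the two parts of Lemma~\ref{lICon} separately, the first being a logical-equivalence argument and the second a routine length count.

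For part (i), I would argue that $\chi(0)$ and $\Psi K(0)$ describe the same configuration, namely the initial one: tape contents $\rhd X \Lambda\Lambda\cdots$, head at square $0$, state $q_0$, ready to execute an instruction with prefix $q_0\rhd\to\ldots$. Both formulae have the same $\pi$-formula of color $0$, namely $\pi_0(\rhd,\widehat 0,\widehat 0)$, so it suffices to show that the conjunction of clauses in $\Psi K(0)$, which runs over \emph{all} cell numbers $(\widehat\mu)_2$ with $0\le(\widehat\mu)_2\le T$, is equivalent to the finite conjunction $\bigwedge_{0\le(\widehat\eta)_2\le|X|}\psi_0(\widehat\eta\to\alpha(\eta))$ together with the universal formula $\forall\widehat u_0[\widehat u_0>\widehat\gamma\to\psi_0(\widehat u_0\to\Lambda)]$. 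The point is that in the initial configuration every cell with number exceeding $|X|=(\widehat\gamma)$ holds $\Lambda$, so each clause $\psi_0(\widehat\mu\to\Lambda)$ with $(\widehat\mu)_2>|X|$ in $\Psi K(0)$ is exactly one instance of the body of the universal formula; conversely, expanding the universal quantifier over all tuples $\widehat u_0$ with $(\widehat u_0)_2>(\widehat\gamma)$ recovers precisely those clauses. (For tuples with $(\widehat u_0)_2>T$ the premise $\widehat x_0\approx\widehat u_0$ is never satisfiable within the modelled zone, so such extra instances are harmless; alternatively one observes, as in Lemma~\ref{trcl}(i), that any clause whose premise is unsatisfiable is vacuously true and may be dropped.) The clauses with $0\le(\widehat\eta)_2\le|X|$ match verbatim. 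Hence the quantifier-free expansions of the two formulae coincide up to reordering of conjuncts and up to vacuously-true conjuncts, which gives the equivalence.

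For part (ii), I would just count symbols in the natural language of Subsection~\ref{s3.1}, using the conventions of Subsection~\ref{s3.3} and Lemmata~\ref{ll2} and~\ref{lcl}. The formula $\chi(0)$ is a conjunction of three pieces: the $\pi$-formula $\pi_0(\rhd,\widehat 0,\widehat 0)$, which is $\mathcal{O}(n+r)$ long by Lemma~\ref{lcl} (up to indices, bounded by a constant times $|\psi_0(\widehat u_0\to\Lambda)|$); the finite conjunction $\bigwedge_{0\le(\widehat\eta)_2\le|X|}\psi_0(\widehat\eta\to\alpha(\eta))$, which has $|X|+1$ clauses, each of length $\mathcal{O}(|\psi_0(\widehat u_0\to\Lambda)|)$ since a clause $\psi_0(\widehat\eta\to\alpha(\eta))$ has the same shape as $\psi_0(\widehat u_0\to\Lambda)$ — $\widehat\eta$ is a tuple of constants rather than variables, which only shortens it; and the universal formula $\forall\widehat u_0[\widehat u_0>\widehat\gamma\to\psi_0(\widehat u_0\to\Lambda)]$, whose length is $|\psi_0(\widehat u_0\to\Lambda)|$ plus the cost of the quantifier prefix $\forall\widehat u_0$ and of the comparison $\widehat u_0>\widehat\gamma$; by Lemma~\ref{ll2}(i) the latter is $\mathcal{O}(|\widehat u_0|)=\mathcal{O}(|\psi_0(\widehat u_0\to\Lambda)|)$. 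Adding the connectives and parentheses contributes only $\mathcal{O}(1)$. Summing, $|\chi(0)(\widehat y_0)|\le D_2\cdot|X|\cdot|\psi_0(\widehat u_0\to\Lambda)|$ for a suitable constant $D_2$, as claimed. I would also remark that the indices on the first-index-$0$ variables are bounded in length (decimal notation, first index $0$, second indices up to $n$ or $r$), so they are already absorbed into $|\psi_0(\widehat u_0\to\Lambda)|$.

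The only mildly delicate point — the ``main obstacle,'' such as it is — is making part (i) airtight about the clauses of $\Psi K(0)$ indexed by $(\widehat\mu)_2$ with $|X|<(\widehat\mu)_2\le T$ versus the instances produced by expanding $\forall\widehat u_0[\widehat u_0>\widehat\gamma\to\cdots]$ over \emph{all} $(n{+}1)$-tuples: the universal quantifier ranges over tuples coding numbers up to $\exp(n{+}1)-1$, not merely up to $T=\exp(n)$, so there are extra instances. These are innocuous because, as noted above, any clause $\psi_0(\widehat u_0\to\Lambda)$ whose premise $\widehat x_0\approx\widehat u_0$ cannot hold (the color-$0$ record lies in the zone $0\le(\widehat x_0)_2\le T$) is identically true by Lemma~\ref{trcl}(i), and a conjunction is unchanged by inserting or deleting identically-true conjuncts; so the expansions of $\chi(0)$ and $\Psi K(0)$ have the same truth value on $\mathcal{B}$ for every assignment, which is what ``equivalent'' means here. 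Everything else is bookkeeping.
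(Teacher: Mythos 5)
Your part (ii) is essentially the paper's own argument (Lemmata \ref{ll2}(i) and \ref{lcl}, plus counting the $|X|+1$ clauses and the timer), and the skeleton of your part (i) --- expand $\forall\widehat{u}_{0}[\widehat{u}_{0}\!>\!\widehat{\gamma}\rightarrow\psi_{0}(\widehat{u}_{0}\!\rightarrow\!\Lambda)]$ into its matching conjunction and compare it with the clauses of $\Psi K(0)$ --- is exactly what the paper does. The trouble is in the one step you yourself single out as delicate: the instances with $T<(\widehat{u}_{0})_2\leqslant 2^{n+1}-1$. You dismiss them on the ground that the premise $\widehat{x}_{0}\!\approx\!\widehat{u}_{0}$ ``cannot hold'' because the colour-$0$ record lies in the zone $0\leqslant(\widehat{x}_{0})_2\leqslant T$, and you cite Lemma \ref{trcl}(i) to conclude these clauses are identically true. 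That is not what Lemma \ref{trcl}(i) says, and the claim is false: $\widehat{x}_{0}$ is an $(n+1)$-tuple of \emph{free} Boolean variables, and neither formula constrains its value to the zone $[0,T]$; Lemma \ref{trcl}(i) only gives truth of a clause under those assignments for which $\widehat{x}_{0}\neq\widehat{\mu}$. Concretely, set $\widehat{d}_{0}=c\rhd$, $\widehat{q}_{0}=\widehat{0}$, $\widehat{z}_{0}=\widehat{0}$, let $\widehat{x}_{0}$ code the number $T+1$ (representable, since the tuples have $n+1$ bits) and take $\widehat{f}_{0}\neq c\Lambda$: every clause of $\Psi K(0)$ then has a false premise and the timer is satisfied, so $\Psi K(0)$ is true, while the instance $\widehat{u}_{0}=\widehat{x}_{0}$ of the universal part of $\chi(0)$ is false. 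Hence your assertion that the two expansions agree on every assignment fails; under the definitions as printed, $\chi(0)$ implies $\Psi K(0)$ but not conversely, and the extra conjuncts cannot simply be ``dropped'' as vacuous.

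To be fair, the paper's one-line proof is silent on this very point, so your instinct to address it was right; only the patch is invalid. The honest repair is different: in the genuine initial configuration every cell whose number exceeds $|X|$ --- including those numbered in $(T,2^{n+1}-1]$ --- contains $\Lambda$, so one should read the conjunction defining $\Psi K(0)$ as running over all representable $(n+1)$-tuples (or, equivalently, state the lemma as equivalence up to these additional blank-cell clauses, which is all that the use of Lemma \ref{lICon}(i) in Proposition \ref{Pr5} requires); with that reading the two expansions coincide verbatim and your matching argument goes through. So: same route as the paper in both items, correct bookkeeping in (ii), but the justification you give for the out-of-range instances in (i) is a step that does not survive scrutiny and needs to be replaced as above.
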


\begin{proof} (i) The first, quantifier-free part of the formula $\chi(0)$ simply 
coincides with the initial fragment of the formula $\Psi K(0)$. If we replace 
the second part of formula $\chi(0)$, which begins with the quantifiers 
$\forall\widehat{u}_{0}$, with its equally matched conjunction, the rest of 
the clauses from $\Psi K(0)$ will appear.

(ii) According to Lemmata \ref{ll2}(i) and \ref{lcl}, the system of 
inequalities $\widehat{u}_{0}\!>\!\widehat{\gamma}$ has a length of the same 
order as $|\psi_{0}(\widehat{u}_{0}\!\rightarrow\!\Lambda)|$, a quantifier 
prefix is a bit shorter. Hence \ $|\forall\widehat{u}_{0}
[\widehat{u}_{0}\!>\!\widehat{\gamma}\rightarrow\psi_{0}(\widehat{u}_{0}
\!\rightarrow\!\Lambda)]|\!=\!\mathcal{O}(
|\psi_{0}(\widehat{u}_{0}\!\rightarrow\!\Lambda)|)$. Since the expression 
$\chi(0)(\widehat{y}_{0})$ includes $|X|\!+\!1$ quasi-equations of a form 
$\psi_{0} (\widehat{\eta}\!\rightarrow\!\alpha(\eta))$ and the timer, 
which have a length of the same order as \ 
$|\psi_{0}(\widehat{u}_{0}\!\rightarrow\!\Lambda)|$ by Lemma \ref{lcl}, the 
whole formula $\chi(0)(\widehat{y}_{0})$ is not more than
$D_{2}\cdot|X|\cdot|\psi_{0}(\widehat{u}_{0}\!\rightarrow\!\Lambda)|$ in
length for some constant $D_{2}$. \end{proof}

\subsection{Simulating formula $\Omega(X,P)$}\label{s6.3}
Let us define
\begin{eqnarray}
 \Omega(X,P) \quad \rightleftharpoons
\quad \forall\widehat{y}_{0},\widehat{y}\,_{T} \ \Bigl\{ \ \Bigl[
\ \chi(0)(\widehat{y}_{0}) \quad \& \
\exists\,\widehat{v}_{n}\forall\,\widehat{a}_{n}\forall\,\widehat{b}_{n}
\ldots\exists\,\widehat{v}_1\forall\,\widehat{a}_{1}\forall\,
\widehat{b}_{1} \nonumber \hphantom{a}\\
\bigl\{\bigwedge\limits_{1\leqslant\,s\leqslant\,n}\bigl[
(\widehat{a}_{s+1}\!\approx\!\widehat{a}_{s}\wedge
\widehat{v}_{s}\!\approx\!\widehat{b}_{s}) \ \vee
(\widehat{v}_{s}\!\approx\!\widehat{a}_{s}\wedge\widehat{b}_{s}\!
\approx\!\widehat{b}_{s+1})\bigr]  \hphantom{aaaaaaa} \\
\rightarrow \
\Phi^{(0)}(P)(\widehat{a}_{1},\widehat{b}_{1})\bigr\}\Bigr] \ \rightarrow 
  \ \chi(\omega)(\widehat{y}\,_{T})\Bigr\},
\nonumber \hphantom{aaaa}  \end{eqnarray}
here we designate $\widehat{a}_{n+1}\!=\!\widehat{y}_{0}, \
\widehat{b}_{n+1}\!=\!\widehat{y}\,_{T}$ in the record of the "big" 
conjunction for the sake of brevity.

\begin{prop}\label{Pr5} The formula $\Omega(X,P)$ has the property (ii) from 
the statement of Theorem~\ref{main}. In other words, this sentence is true on 
the Boolean algebra $\mathcal{B}$ if and only if the machine $P$ accepts the 
input $X$ within $T$ steps. \end{prop}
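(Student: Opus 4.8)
The plan is to unfold the definition of $\Omega(X,P)$ and reduce it, via the equivalences established in the preceding sections, to the statement "$P$ accepts $X$ within $T=\exp(n)$ steps". First I would observe that, by Lemma~\ref{lICon}(i), the subformula $\chi(0)(\widehat{y}_0)$ is equivalent to $\Psi K(0)(\widehat{y}_0)$, where $K(0)$ is the genuine initial configuration on input $X$; and that by the discussion of Subsection~\ref{s6.2}, $\chi(\omega)(\widehat{y}_T)\rightleftharpoons\widehat{q}_T\!\approx\!(1)_2$ asserts precisely that the configuration coded by $\widehat{y}_T$ is in the accepting state (this uses the idle-run instructions added at the start of Section~\ref{s4}, which guarantee that once the machine accepts it stays accepting, so the configuration after exactly $T$ steps is accepting iff the machine accepted at some step $\leqslant T$).

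The core of the argument is to identify the quantifier block
$$\exists\,\widehat{v}_{n}\forall\,\widehat{a}_{n}\forall\,\widehat{b}_{n}\ldots\exists\,\widehat{v}_1\forall\,\widehat{a}_{1}\forall\,\widehat{b}_{1}\Bigl\{\bigwedge_{1\leqslant s\leqslant n}[\ldots]\rightarrow\Phi^{(0)}(P)(\widehat{a}_1,\widehat{b}_1)\Bigr\}$$
with the formula $\Phi^{(n)}(P)(\widehat{y}_0,\widehat{y}_T)$. Indeed, $\Phi^{(n)}$ was defined by the $n$-fold iteration of the Stockmeyer--Meyer "halving" construction of Subsection~\ref{s4.3.2}; peeling off the outermost layer $\Phi^{(s+1)}=\exists\widehat{v}\,\forall\widehat{a}\,\forall\widehat{b}\{[\ldots]\to\Phi^{(s)}(P)(\widehat{a},\widehat{b})\}$ repeatedly, one sees that the nested prefix collapses exactly to the displayed block, the conjunction $\bigwedge_{1\leqslant s\leqslant n}[\ldots]$ packaging the chain of "midpoint" constraints at all $n$ levels (with the convention $\widehat{a}_{n+1}=\widehat{y}_0$, $\widehat{b}_{n+1}=\widehat{y}_T$). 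Hence $\Omega(X,P)$ is logically equivalent to
$$\forall\widehat{y}_0\forall\widehat{y}_T\bigl\{[\Psi K(0)(\widehat{y}_0)\ \&\ \Phi^{(n)}(P)(\widehat{y}_0,\widehat{y}_T)]\rightarrow\chi(\omega)(\widehat{y}_T)\bigr\}.$$
Since $e(n)=\exp(n)=T$ and $0+e(n)\leqslant T$, Proposition~\ref{Pr4}(ii) applies: $\Omega^{(n)}(X,P)(\widehat{y}_0,\widehat{y}_T)=[\Psi K(0)(\widehat{y}_0)\,\&\,\Phi^{(n)}(P)(\widehat{y}_0,\widehat{y}_T)]\to\Psi K(e(n))(\widehat{y}_T)$ is identically true over $\mathcal{B}$ iff $P$ converts $K(0)$ into $K(e(n))$ in $e(n)$ steps, i.e. iff $K(T)$ is the genuine configuration after $T$ steps. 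It remains only to note that replacing the conclusion $\Psi K(T)(\widehat{y}_T)$ by the weaker $\chi(\omega)(\widehat{y}_T)$ corresponds exactly to asking not that $\widehat{y}_T$ code the full configuration $K(T)$, but only that its state component be $q_{acc}=q_1$; by the argument of Proposition~\ref{Pr2}(i)--(ii) applied at the top level (together with the idle-run padding), the resulting implication is identically true over $\mathcal{B}$ iff the state of $K(T)$ is accepting, i.e. iff $P$ accepts $X$ within $T$ steps.

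The main obstacle I anticipate is the bookkeeping in the "collapse" step: one must check carefully that the flat conjunction $\bigwedge_{1\leqslant s\leqslant n}[(\widehat{a}_{s+1}\approx\widehat{a}_s\wedge\widehat{v}_s\approx\widehat{b}_s)\vee(\widehat{v}_s\approx\widehat{a}_s\wedge\widehat{b}_s\approx\widehat{b}_{s+1})]$ sitting in front of a single copy of $\Phi^{(0)}(P)(\widehat{a}_1,\widehat{b}_1)$ really is equivalent to the deeply nested form of $\Phi^{(n)}$, in which each level has its own implication and its own copy of the lower-level formula; the key point is that, as already used in the proof of Proposition~\ref{Pr4}, each implication $[(\ldots)\to\Phi^{(s)}(P)(\widehat{a},\widehat{b})]$ can fail only when the equalities in its premise hold, so distributing the universal quantifiers $\forall\widehat{a}_s\forall\widehat{b}_s$ over the levels and pulling the conjunction out is sound. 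A secondary, more routine point is verifying that the weakening of the final configuration formula to $\chi(\omega)$ is legitimate — this is where one re-invokes the special-values construction of Proposition~\ref{Pr2}(i) to produce, for a non-accepting $K(T)$, values of $\widehat{y}_T$ making the premise true and $\chi(\omega)(\widehat{y}_T)$ false, and conversely uses Proposition~\ref{Pr4}(i) plus the idle-run instructions to see that an accepting run forces $\widehat{q}_T\approx(1)_2$ on every satisfying assignment.
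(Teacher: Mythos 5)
Your proposal is correct and follows essentially the same route as the paper's proof: collapsing the quantifier block of (1) into $\Phi^{(n)}(P)(\widehat{y}_{0},\widehat{y}_{T})$ (the paper does this via the equivalent forms 1)--3) and the definitional contraction $\exists\widehat{v}_s\forall\widehat{a}_s\forall\widehat{b}_s(\Theta_s\to\Phi^{(s-1)})=\Phi^{(s)}$), then invoking Lemma~\ref{lICon}(i) and Proposition~\ref{Pr4}(ii). Your explicit treatment of the weakening of $\Psi K(T)$ to $\chi(\omega)$ via the idle-run instructions and the special-values argument only spells out a step the paper leaves implicit, so no substantive difference.
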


\begin{proof} Let $\Theta_s\!=\!\Theta_s(\widehat{a}_{s},\widehat{a}_{s+1},
\widehat{b}_{s},\widehat{b}_{s+1})$ be a denotation for a disjunction of 
equa\-li\-ties \ $(\widehat{a}_{s+1}\!\approx\!\widehat{a}_{s}\wedge
\widehat{v}_{s}\!\approx\!\widehat{b}_{s})\vee(\widehat{v}_{s}\!\approx\!
\widehat{a}_{s}\wedge\widehat{b}_{s}\!\approx\!\widehat{b}_{s+1})$.
If we carry the quantifiers through the subformulae, which do not contain 
the corresponding variables (recall that a conjunction connects more intimately  
than an implication according to the agreement of Subsection~\ref{s3.3}), then we 
will obtain that the part of the formula $\Omega(X,P)$, which is located in the 
big square brackets in (1), is equivalent to each of the three following 
formulae:
\begin{eqnarray*} 1) \ \chi(0)(\widehat{y}_{0}) \ \& \
\exists\,\widehat{v}_{n}\forall\,\widehat{a}_{n}\forall\,\widehat{b}_{n}
\ldots\exists\,\widehat{v}_1\forall\,\widehat{a}_{1}\forall\,\widehat{b}_{1}
\bigl\{\bigwedge\limits_{1\leq\,s\leq\,n}\Theta_{s}\rightarrow
\Phi^{(0)}(P)(\widehat{a}_{1},\widehat{b}_{1})\bigr\}; \hphantom{aaaaa} \\[2mm]
2) \ \Psi K(0)(\widehat{y}_{0}) \ \& \ \exists\,\widehat{v}_{n}\forall\,
\widehat{a}_{n}\forall\,\widehat{b}_{n}\ldots\exists\,\widehat{v}_1\forall\,
\widehat{a}_{1}\forall\,\widehat{b}_{1}\bigl\{\Theta_{n}\rightarrow
[\Theta_{n-1}\to(\ldots \rightarrow  \hphantom{aaaaaaaa}\\%
\rightarrow\{\Theta_{1}\to\Phi^{(0)}(P)
(\widehat{a}_{1},\widehat{b}_{1})\}\ldots)] \bigr\}; \hphantom{aaa}\\[2mm]
3) \ \Psi K(0)(\widehat{y}_{0}) \ \& \ \exists\,\widehat{v}_{n}
\forall\,\widehat{a}_{n}\forall\,\widehat{b}_{n}\bigl\{\Theta_{n}
\rightarrow\exists\,\widehat{v}_{n-1}\forall\,\widehat{a}_{n-1}\forall\,
\widehat{b}_{n-1}[\Theta_{n-1}\to(\ldots \rightarrow  \hphantom{aaaaa}\\
\rightarrow\exists\,\widehat{v}_1\forall\,\widehat{a}_{1}\forall\,
\widehat{b}_{1}\{\Theta_{1}\to
\Phi^{(0)}(P)(\widehat{a}_{1},\widehat{b}_{1})\})]\ldots\bigr\}. \hphantom{aaaa}
\end{eqnarray*}

Under to the definition, the formula \ 
$\exists\,\widehat{v}_s\forall\,\widehat{a}_{s}\forall\,\widehat{b}_{s}
(\Theta_{s}\to\Phi^{(s-1)}(P)(\widehat{a}_{s},\widehat{b}_{s}))$ 
contracts into $\Phi^{(s)}(P)(\widehat{a}_{s+1},\widehat{b}_{s+1})$. 
Therefore the whole $\Omega(X,P)$ is equi\-va\-lent to  \
$\forall\widehat{y}_{0},\widehat{y}\,_{T}\bigl[\bigl(\Psi K(0) \
\& \ \Phi^{(n)}(P)\bigr)\to\chi(\omega)\bigr]$. Consequently, based on 
Proposition \ref{Pr4}(ii) and Lemma \ref{lICon}(i), one could say that formula (1)  
is the mo\-de\-ling formula. \end{proof}

\subsection{The time of writing of $\Omega(X,P)$}\label{s6.4}
The simulating formula $\Omega(X,P)$ is described by the definition (1) in an 
explicit form, this allows us to design an algorithm for its construction. 
It remains only to prove the properties (i) and (iii) of the statement of 
Theo\-rem~\ref{main}. Before we substantiate the polynomiality of the algorithm, 
we will make sure that the formula $\Omega(X,P)$ of a form (1) 
has a polynomial length. We recall that the length of a formula is calculated 
in the natural language --- see Subsections~\ref{s3.1} and \ref{s3.2}.

\begin{lem}\label{lOm1} There exists a constant $D\!>\!0$ such that it does 
not depend on $P$ and $n$ and the inequalities \
$|X|\!=\!n\leqslant|\Omega(X,P)|\leqslant\!D\cdot|P|\cdot|X|^{2+\varepsilon}$ 
\ hold for all the long enough $X$ and any preassigned \ $\varepsilon\!>\!0$. 
\end{lem}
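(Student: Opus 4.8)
The plan is to bound the length of $\Omega(X,P)$ by tracking the lengths of its constituents from the inside out, using the lemmata already proved in Sections~\ref{s4}--\ref{s6}. First I would establish the "size of one atomic block": by Lemma~\ref{lcl} a single clause $\psi_{t}(\widehat{u}\!\rightarrow\!\beta)$ and a $\pi$-formula are $\mathcal{O}(n+r)$ in length \emph{without} the indices, and the indices are decimal numerals, so each first index (a number up to $T\!=\!\exp(n)$, or simply a number up to $n$) contributes $\mathcal{O}(n)$ symbols and each second index contributes $\mathcal{O}(\log(n+r))$ symbols; since $r\!=\!\mathcal{O}(\log(|A|+U))\!=\!\mathcal{O}(\log|c_OP|)$, the quantity $r$ is absorbed into a logarithmic (hence sub-linear in $|X|$, at most logarithmic in $|P|$) factor. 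So, taking indices into account, $|\psi_{0}(\widehat{u}_{0}\!\rightarrow\!\Lambda)|\!=\!\mathcal{O}(n^{2})$ up to a factor logarithmic in $|P|$ (more conservatively $\mathcal{O}(n^{1+\varepsilon})$ if one distributes the index cost carefully), and by Lemma~\ref{lphi} each $\varphi(k)$ is at most $\mathcal{O}(n)$ times that, i.e.\ $\mathcal{O}(n^{3})$ up to logarithmic-in-$|P|$ factors.

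Next I would assemble $\Phi^{(0)}(P)$: by Lemma~\ref{trcl}(ii) we have $|\Phi^{(0)}(P)|\!\leqslant\!D_{1}\cdot|c_OP|\cdot|\varphi(N)|$, and since $|c_OP|\!\leqslant\!l(|P|)$ is linear in $|P|$ by the agreement of Subsection~\ref{s3.2}, this gives $|\Phi^{(0)}(P)|\!=\!\mathcal{O}(|P|\cdot n^{2+\varepsilon})$ once the $r$-dependence is folded into the $\varepsilon$. The crucial observation — and the reason the whole construction works — is that $\Phi^{(0)}(P)$ occurs in $\Omega(X,P)$ \emph{exactly once}, not iterated: the doubling trick of Stockmeyer and Meyer, visible in definition~(1), nests $n$ blocks of the form $\exists\widehat{v}_s\forall\widehat{a}_s\forall\widehat{b}_s(\Theta_s\!\to\!\cdots)$ around a single copy of $\Phi^{(0)}(P)$. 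So I would bound the "wrapper": each $\Theta_s$ is a disjunction of two conjunctions of systems $\widehat{a}\!\approx\!\widehat{b}$ over $(2n\!+\!3r\!+\!5)$-tuples, hence $\mathcal{O}(n)$ long without indices and $\mathcal{O}(n\log n)$ with them; there are $n$ of these; each quantifier prefix $\exists\widehat{v}_s\forall\widehat{a}_s\forall\widehat{b}_s$ is $\mathcal{O}(n)$ long with indices; summing over $s\!=\!1,\dots,n$ the wrapper contributes $\mathcal{O}(n^{2+\varepsilon})$. Finally $\chi(0)$ is $\mathcal{O}(|X|\cdot|\psi_0(\widehat{u}_0\!\rightarrow\!\Lambda)|)\!=\!\mathcal{O}(n^{3})$ up to $\varepsilon$ by Lemma~\ref{lICon}(ii), and $\chi(\omega)$ is $\mathcal{O}(\log T\cdot r)\!=\!\mathcal{O}(n\log|P|)$ by the computation in Subsection~\ref{s6.2}; neither is the dominant term. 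Adding up, the dominant term is $|\Phi^{(0)}(P)|\!=\!\mathcal{O}(|P|\cdot|X|^{2+\varepsilon})$, which yields the claimed upper bound $D\cdot|P|\cdot|X|^{2+\varepsilon}$ for a suitable $D$ independent of $P$ and $n$; the lower bound $|X|\!\leqslant\!|\Omega(X,P)|$ is immediate since the assignment $\chi(0)$ already mentions all $|X|\!+\!1$ tape cells of the input.

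The main obstacle, and the only place that requires genuine care rather than bookkeeping, is the treatment of the \emph{indices}. The formulae carry two kinds of subscripts: the "color" $t$, which for the final formula ranges only over $0,\dots,n$ and the single value $T\!=\!\exp(n)$, and the position-index $0,\dots,n$ (and $0,\dots,r$) inside each tuple. Writing the color $T$ in decimal costs $\lfloor\lg T\rfloor\!+\!1\!=\!\Theta(n)$ digits, so every single occurrence of a $T$-colored atom already costs $\Theta(n)$ just for that one index; a tuple $\widehat{x}_T$ of $n\!+\!1$ such atoms therefore costs $\Theta(n^{2})$. This is exactly why the bound is $|X|^{2+\varepsilon}$ and not $|X|^{1+\varepsilon}$, and why Theorem~\ref{main}(iii) is stated with the exponent $2+\varepsilon$ rather than something smaller; one has to verify that \emph{no} term is forced up to $n^{3}$ or beyond once the elimination of the redundant colors $t\!=\!n\!+\!1,\dots,T\!-\!1$ (via the Stockmeyer–Meyer contraction, Subsections~\ref{s4.3.2}, \ref{s6.1}) has been performed. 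Here the key point is that $\chi(0)$, the one genuinely $\Theta(|X|^{2})$-or-worse object, uses color $0$, whose index is a single digit, so its size is governed by $|X|$ tuples each of length $\mathcal{O}(n)$ with short indices, giving $\mathcal{O}(n^{2})$ up to the $r$-factor and hence $\mathcal{O}(n^{2+\varepsilon})$; and $\Phi^{(0)}(P)$ uses the two symbolic colors $t,t+1$ which at the outermost application become $0$ and $1$ again — short — so its $|P|\cdot n^{2+\varepsilon}$ bound is not inflated by the index cost. I would therefore organize the proof as: (a) fix the cost model for decimal indices; (b) bound one clause/$\pi$-formula with indices; (c) bound $\varphi(k)$, then $\Phi^{(0)}(P)$; (d) bound $\chi(0)$, $\chi(\omega)$, and each $\Theta_s$ with its prefix; (e) sum, absorb the $r=\mathcal{O}(\log|P|)$ and the residual index logarithms into the arbitrary $\varepsilon>0$, and read off $D$.
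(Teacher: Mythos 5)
Your proposal is correct and follows essentially the same route as the paper's proof: a component-by-component length count whose decisive point is that, after the Stockmeyer--Meyer contraction, the first indices actually occurring are only $0,\dots,n$ (at most $\mathcal{O}(\lg n)$ digits each) plus a bounded number of occurrences of the color $T$, so the single copy of $\Phi^{(0)}(P)(\widehat{a}_{1},\widehat{b}_{1})$ costs $\mathcal{O}(|P|\cdot n^{2}(\lg n)^{2})\leqslant |P|\cdot n^{2+\varepsilon}$, while $\chi(0)$, $\chi(\omega)$, and the $n$ blocks $\Theta_{s}$ with their prefixes all stay within $n^{2+\varepsilon}$, exactly as in the paper. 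Just be sure the final write-up uses the sharp figures of your last paragraph rather than the provisional $\mathcal{O}(n^{3})$ estimates for $\varphi(k)$ and $\chi(0)$ stated in your first two paragraphs, which, taken literally, would only yield $|P|\cdot|X|^{3}$ and would not prove the lemma.
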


\begin{proof} Many components of the modeling formula were estimated already 
du\-ring their description, but their lengths were estimated on the 
assumption that their subformulae are written with basic variables 
$\langle\widehat{x}_{t},\widehat{q}_{t},\widehat{z}_{t},\widehat{d}_{t},
\widehat{f}_{t}\rangle$, which were denoted in Subsection \ref{s4.3} as 
$\widehat{y}_{t}$. However, they are not included in the composition of the
subformulae of $\Phi^{(0)}(P)(\widehat{a}_{1},\widehat{b}_{1})$  --- we have 
written the variables from the tuples $\widehat{a}_{1}$ and $\widehat{b}_{1}$ 
instead of theirs. Namely, the first $n\!+\!1$ variables in the tuple 
$\widehat{a}_{1}$ serve as $\widehat{x}_{t}$, and they serve as $\widehat{x}_{t+1}$ 
in the tuple $\widehat{b}_{1}$; the second $r\!+\!1$ variables in $\widehat{a}_{1}$ 
are put instead of $\widehat{q}_{t}$, and they are put in place of 
$\widehat{q}_{t+1}$ in $\widehat{b}_{1}$ and so on. Certainly, 
this replacement does not influence on the length of those formulae, 
where the variables are located, if one disregards the length of indices. 

However, the length of the indices has changed markedly. Just because of 
this reason, they were earlier taken into account only implicitly for the 
estimation of the lengths of the formulae, e.g., see Lemmata \ref{ll2} and 
\ref{lphi}, or they were not counted at all (see Lemma \ref{lcl}).

The first indices of variables of the form $\widehat{a}_{1}$ and
$\widehat{b}_{1}$ are $\lfloor\lg 1\rfloor\!+\!1\!=\!1$ in length. The second 
subscripts of these variables have their lengths restricted from above by
$E\rightleftharpoons\lfloor\lg(2n\!+\!3r\!+\!5)\rfloor\!+\!1$. The second 
indices of variables $\widehat{y}_{0}$ are shorter --- they are bounded by
$E_{0}\rightleftharpoons\lfloor\max\{\lg n,\lg(r)\}\rfloor\!+\!1$; besides, 
the subscripts are not included in the record of the tuples of constants. The  
number $n\!=\!|X|$ will grow bigger than $r$, and so the inequality 
$E,E_{0}\leqslant\lfloor\lg n\rfloor\!+\!2$ holds for the long enough $X$. 
Therefore by Lemmata \ref{lcl} and \ref{lphi}, the quasi-equations and 
timers of the subformulae $\chi(0)(\widehat{y}_{0})$ and 
$\Phi^{(0)}(P)(\widehat{a}_{1},\widehat{b}_{1})$ from (1), in which the 
tuples $\widehat{u}(\beta)$ are not included for $\beta\!=\!R,L$, are not 
greater than $D_{3}\cdot n\cdot(\lfloor\lg n\rfloor\!+\!2)$ in length; the 
clauses and timers comprising $\widehat{u}(\beta)$ have a length not more 
than $D_{4}\cdot[n\cdot(\lfloor\lg n\rfloor\!+\!2)]^{2}$ for the suitable 
constants $D_{3}$ and $D_{4}$. By Lemma \ref{lphi} we have \
$|\varphi(k)|\!\leqslant\! D_{5}\cdot[n\cdot(\lfloor\lg n\rfloor\!+\!2)]^{2}$, 
but with another constant $D_{5}$ and for the long enough $X$.

The system of equalities, which are under the "big" conjunction in (1), is \ 
$\mathcal{O}(n\cdot[n\cdot(\lfloor\lg n\rfloor\!+\!2)])$ in length; and the 
quantifier prefix, which is situated before this conjunction, has 
approximately the same length. It is easy to notice that an inequality \ 
$(\lfloor\lg n\rfloor\!+\!2)^{2}\leqslant\!n^{\varepsilon}$ holds for all \ 
$\varepsilon\!>\!0$ and the big enough $n$. It follows from this and Lemma 
\ref{trcl}(ii) that \ $|\Omega(X,P)|\!\leqslant\!D\cdot|P|\cdot|X|^{2+
\varepsilon}$ for some constant $D$.\end{proof}

\begin{cor}\label{lOm2} There is a constant $D_{\varepsilon}$ such that the 
inequality \ $|\Omega(X,P)|\!\leqslant\!D_{\varepsilon}\cdot|P|\cdot|X|^{2+
\varepsilon}$ holds for all $X$ and $P$. \end{cor}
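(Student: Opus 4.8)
The plan is to upgrade Lemma~\ref{lOm1} from an ``eventually'' statement to a genuinely uniform bound valid for \emph{all} inputs, simply by absorbing the finitely many exceptional short inputs into a larger constant. First I would invoke Lemma~\ref{lOm1}: it furnishes a constant $D>0$, independent of $P$ and $n$, such that $|\Omega(X,P)|\leqslant D\cdot|P|\cdot|X|^{2+\varepsilon}$ holds for all $X$ whose length exceeds some threshold $n_{0}$ (which itself may depend on $\varepsilon$ but not on $P$, since the proof of Lemma~\ref{lOm1} only uses $\varepsilon$ through the asymptotic inequality $(\lfloor\lg n\rfloor+2)^{2}\leqslant n^{\varepsilon}$). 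The task is therefore to control $|\Omega(X,P)|$ when $|X|\leqslant n_{0}$.

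For the short inputs I would argue as follows. Inspecting the definition~(1) of $\Omega(X,P)$, the only part whose size grows with $P$ is the subformula $\Phi^{(0)}(P)(\widehat{a}_{1},\widehat{b}_{1})$, which by Lemma~\ref{trcl}(ii) satisfies $|\Phi^{(0)}(P)|\leqslant D_{1}\cdot|c_OP|\cdot|\varphi(N)|$, and by the agreement at the start of Subsection~\ref{s3.2} we have $|c_OP|\leqslant l(|P|)$ for a linear function $l$; the remaining ingredients $\chi(0)(\widehat{y}_{0})$, $\chi(\omega)(\widehat{y}\,_{T})$, the quantifier prefix, and the ``big'' conjunction of the $\Theta_{s}$ are all bounded by functions of $n$ alone (with the single subtlety that $T=\exp(n)$ appears only as a \emph{decimal index} occupying $\lfloor\lg T\rfloor+1=\mathcal{O}(n)$ digits, as already noted in Subsection~\ref{s6.2}). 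Hence there is a function $\Lambda_0(n)$, not depending on $P$, with $|\Omega(X,P)|\leqslant \Lambda_0(n)\cdot|P|$ for every $X$ of length $n$ and every $P$. Now for $|X|=n\leqslant n_{0}$ we have $|X|^{2+\varepsilon}\geqslant 1$, so it suffices to take
$$D_{\varepsilon}\;\rightleftharpoons\;\max\Bigl\{\,D,\ \max_{1\leqslant n\leqslant n_{0}}\Lambda_0(n)\,\Bigr\},$$
a finite quantity since the maximum ranges over finitely many values of $n$; then $|\Omega(X,P)|\leqslant D_{\varepsilon}\cdot|P|\cdot|X|^{2+\varepsilon}$ holds for all $X$ and $P$, which is the assertion of the corollary.

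The only point that needs a little care --- and the closest thing to an obstacle --- is making sure the threshold $n_{0}$ from Lemma~\ref{lOm1} and the bound $\Lambda_0(n)$ for short inputs are both genuinely independent of the program $P$, so that the final $\max$ is over a fixed finite set and the resulting $D_{\varepsilon}$ does not secretly depend on $P$. This is immediate once one observes that the $P$-dependence of $\Omega(X,P)$ is confined to the single linear factor $|c_OP|\leqslant l(|P|)$ multiplying a $P$-free quantity (Lemma~\ref{trcl}(ii) together with the encoding agreement), exactly as already exploited in the proof of Lemma~\ref{lOm1}. With that remark the argument is routine bookkeeping; no new estimates beyond those of Lemmata~\ref{lcl}, \ref{lphi}, \ref{trcl}, and \ref{lOm1} are required.
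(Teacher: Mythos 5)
Your overall strategy coincides with the paper's: Lemma~\ref{lOm1} handles all sufficiently long $X$, and the finitely many short inputs are absorbed into a larger constant (the paper does this in one line, by taking the maximum of the ratio $|\Omega(X,P)|/(|P|\cdot|X|^{2+\varepsilon})$ over the exceptional $X$). Making the uniformity in $P$ explicit, as you try to do, is exactly the point the paper leaves implicit, so it is the right thing to worry about.

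However, the two claims your bookkeeping rests on are not accurate for this construction. First, the threshold $n_0$ in Lemma~\ref{lOm1} is not obviously independent of $P$: its proof uses not only $(\lfloor\lg n\rfloor+2)^2\leqslant n^{\varepsilon}$ but also the requirement that $n=|X|$ ``grow bigger than $r$'', and $r$ is chosen so that $\exp(r+1)\geqslant|A|+U$ with $U$ the largest state number of $P$, hence $r$ depends on $P$. Second, and for the same reason, it is not true that apart from the single factor $|c_OP|$ everything in $\Omega(X,P)$ is a function of $n$ alone: the tuples $\widehat{q}_t,\widehat{d}_t$ (and hence every timer and clause, $\chi(0)$, $\chi(\omega)$, the subformulae $\Theta_s$, the quantifier prefixes, and the quantity $|\varphi(N)|$ appearing in Lemma~\ref{trcl}(ii)) all have length growing with $r$, so the ``$P$-free quantity'' multiplying $|c_OP|$ is not $P$-free. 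For long $X$ this is harmless because $n$ dominates $r$; but the short-$X$ regime is precisely where $r$ cannot be bounded by a function of $n$, so your bound $|\Omega(X,P)|\leqslant\Lambda_0(n)\cdot|P|$ needs an additional argument: one must check that $r=\mathcal{O}(|c_OP|)$ (true, since the largest state index is written inside $P$) and, more delicately, that the $N$ copies of the $(r+1)$-tuples inside $\Phi^{(0)}(P)$ contribute only $\mathcal{O}(|c_OP|)$ times a polynomial in $n$ — i.e.\ that the product $N\cdot r$ is linearly controlled by $|c_OP|$, which requires either a normalization of how states are numbered or a separate estimate. To be fair, the paper's own one-line proof silently assumes the same uniformity in $P$ when it asserts that the ratio ``attains its maximum'' over the finitely many exceptional $X$; but since your write-up explicitly grounds the step on the assertion that the remaining ingredients depend on $n$ alone, that assertion must be repaired before the argument is complete.
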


\begin{proof} For each given $\varepsilon$, there exists only a finite number 
of the strings $X$, for which the inequality from the statement of the lemma 
can be violated. Therefore the ratio \
$\lceil|\Omega(X,P)|/(|P|\cdot|X|^{2+\varepsilon})\rceil$ attains 
its maximum for these $X$. Clearly, it is fit for our $D_{\varepsilon}$. 
\end{proof}%

\begin{cor}\label{tOm} There exists a polynomial $g$ such that for all $X$ and 
$P$ the construction time of the sentence $\Omega(X,P)$ is not greater than \ 
$g(|X|+|P|)$. \end{cor}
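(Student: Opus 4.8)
The plan is to exhibit an explicit algorithm that follows the recursive definition~(1) of $\Omega(X,P)$ literally, and then to bound its running time by a polynomial in $|X|+|P|$. First I would observe that by Corollary~\ref{lOm2} the output string $\Omega(X,P)$ has length at most $D_\varepsilon\cdot|P|\cdot|X|^{2+\varepsilon}$, which is already a fixed polynomial $p(|X|+|P|)$ once we freeze, say, $\varepsilon=1$; so the \emph{size} of what must be written is polynomially bounded, and it remains only to check that we can actually produce it quickly rather than merely that it is short. Since the agreement at the beginning of Subsection~\ref{s3.2} gives a linear function $l$ with $|M|\leqslant|c_OM|\leqslant l(|M|)$, a polynomial bound in the natural-language length transfers to a polynomial bound in the code length, so it suffices to argue in the natural language.

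Next I would walk through the construction stage by stage, matching the order in which the formula was built in Sections~\ref{s4}--\ref{s6}. Stage~0: augment $P$ with the $2|A|$ idle-run instructions and enumerate the resulting $N=\mathcal{O}(|c_OP|)$ instructions; for each instruction $M(k)=q_i\alpha\to q_j\beta$ emit the corresponding $\varphi(k)$ — writing a $\pi$-formula, a copying formula $\Delta^{cop}$, a retrieval formula $\Gamma^{ret}$, a write formula $\Delta^{wr}$, and a second $\pi$-formula, with the increment/decrement tuples $\widehat u_k(R),\widehat u_k(L)$ formed by the explicit bitwise expressions recorded in Subsection~\ref{s3.3}. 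Each such piece is assembled by concatenating $\mathcal{O}(n)$ (or, for the $R,L$ cases, $\mathcal{O}(n^2)$ by Lemma~\ref{ll2}(ii)) symbols together with their decimal indices, which are themselves $\mathcal{O}(\log n)$ long and computable in polynomial time; so $\Phi^{(0)}(P)$ is produced in time polynomial in $|X|+|P|$. Stage~1: build $\chi(0)$, which requires writing $|X|+1$ explicit clauses $\psi_0(\widehat\eta\to\alpha(\eta))$ reading off the symbols of $X$, one timer, and one short universally quantified block; and build $\chi(\omega)=\widehat q_{\,T}\!\approx\!(1)_2$, whose only nontrivial cost is printing the decimal numeral for $T=\exp(n)$, i.e.\ $\lfloor\lg T\rfloor+1=\mathcal{O}(n)$ digits. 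Stage~2: emit the quantifier prefix $\exists\widehat v_n\forall\widehat a_n\forall\widehat b_n\ldots\exists\widehat v_1\forall\widehat a_1\forall\widehat b_1$ — that is $\mathcal{O}(n)$ blocks, each of $\mathcal{O}(n\log n)$ symbols — followed by the ``big'' conjunction $\bigwedge_{1\leqslant s\leqslant n}[\,\Theta_s\to\ldots\,]$, again $\mathcal{O}(n)$ copies of an $\mathcal{O}(n\log n)$-symbol block, and finally splice in one copy of $\Phi^{(0)}(P)$; wrap everything in the outer $\forall\widehat y_0,\widehat y_{\,T}$ and the two implications. Every one of these steps is a bounded number of passes over data that is itself of polynomial size, with arithmetic (incrementing loop counters, forming binary and decimal representations of indices up to $T$) done in polynomial time; concatenating $\mathcal{O}(n)$ blocks of polynomial length costs polynomial time. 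Summing the stage times gives the desired polynomial $g$.

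The one point that deserves care — and is the only place where a naive reading could go wrong — is that $\Phi^{(0)}(P)$ appears under a quantifier prefix of depth $\mathcal{O}(n)$ but, crucially, is written down \emph{only once}: the definition~(1) is not the unfolded recursion $\Phi^{(s+1)}\rightleftharpoons\exists\widehat v\forall\widehat a\forall\widehat b(\cdots\to\Phi^{(s)})$ with $\Phi^{(0)}$ duplicated exponentially many times, but the Stockmeyer--Meyer-style ``folded'' form in which the single innermost $\Phi^{(0)}(P)(\widehat a_1,\widehat b_1)$ is reached through $n$ nested $\Theta_s$ guards. Thus the algorithm never materialises anything of exponential size, and the equivalence of~(1) with the unfolded modeling formula is exactly the content of Proposition~\ref{Pr5}, which we may invoke. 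Once this is noted, the time bound follows by adding up the polynomially many polynomially bounded subtasks listed above, and one takes $g$ to be (a suitable constant times) the resulting polynomial, absorbing into it the linear blow-up $l$ from the natural-language length to the code length. I do not expect any genuine obstacle here; the proposition is a routine bookkeeping consequence of the explicit shape of~(1) and of Corollary~\ref{lOm2}, and a one- or two-sentence proof citing the explicitness of~(1), the folded (non-unfolded) form of the quantifier block, and Corollary~\ref{lOm2} should suffice.
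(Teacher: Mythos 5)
Your proposal is correct and follows essentially the same route as the paper: one simply runs an explicit algorithm that reads off $n$, $r$, and the instructions of $P$, writes $\chi(0)$, $\Phi^{(0)}(P)$ (once, in the folded form of definition (1)), the quantifier prefix and the big conjunction, each in time polynomial in $|X|+|P|$, with the output length controlled by Corollary \ref{lOm2}. The paper's only additional detail is that it phrases this as a multi-tape machine $P_1$ running in polynomial time $p$ and then passes to its single-tape variant with the standard quadratic slowdown, which your argument omits but which does not change the substance.
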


\begin{proof} We will at first, estimate the time needed for a multi-tape 
Turing machine $P_{1}$ to write down the formula $\Omega(X,P)$. The running
alphabet of this machine contains all symbols of natural language from 
Subsection~\ref{s3.1}.

Let the input tape of the machine comprises a string $X$ and a program $P$, 
and $|A|$ be a quantity of different symbols in the record of $X$ and  
$P$. The machine $P_{1}$ can determine a length $n$ of the input $X$, a 
maximal number $U$ of internal states in $P$, and a size of $|A|$ during 
one passage along its input tape. The calculation of the values of 
$r\!\leqslant\!\log_{2}(U\!+\!1\!+\!|A|)$ and the decimal notation of it and 
$n$ takes a time bounded by a polynomial of $|P|$ and $|X|$. Further, the 
$P_{1}$ moves again along the record of $X$ and $P$ and writes the formula 
$\chi(0)(\widehat{y}_{0})$ at first, after that it writes $\Phi^{(0)}(P)
(\widehat{a}_{1},\widehat{b}_{1}))$, and finally, it designs $\Omega(X,P)$. It 
is clear that this process takes the time, which is no greater than the value 
of $p(|X|+|P|)$ for some polynomial $p(y)$. 

The single-tape variant $P_{2}$ of the machine $P_{1}$ will do the same 
actions in the time equal to $g(|X|+|P|)$, which is of the form of
$\mathcal{O}([p(|X|+|P|)]^{2})$  \cite{AHU,ArBar}. \end{proof}

\section{The complexity of the theory of a single equivalence relation}\label{s7}

Let $\mathfrak{K}$ be a class of the algebraic systems, whose signature (or 
underlying language) $\sigma$ contains the symbol of the binary predicate 
$\backsim$, and this predicate is interpreted as an equivalence relation on 
every structure of the class, in particular, $\backsim$ may be an equality 
relation. We denote these relations by the same symbol. 
 
\begin{defin}\label{eqntr}{\rm Let us assume that there exists a 
\emph{$\backsim$-nontrivial} system $\mathcal{E}$ in a class $\mathfrak{K}$, 
namely, such a structure that contains at least two $\backsim$-nonequivalent  
elements. Then the class $\mathfrak{K}$ is also termed 
\emph{$\backsim$-nontrivial}. A theory $\mathcal{T}$ is named 
\emph{$\backsim$-nontrivial} if it has a $\backsim$-nontrivial model. When 
$\backsim$ is either the equality relation or there is such formula $N(x,y)$ 
of the signature $\sigma$ that the sentence $\exists x,yN(x,y)$ is consistent 
with the theory $Th(\mathfrak{K})$ (or $\mathcal{T}$, or belongs to 
$Th(\mathcal{E})$), and this formula senses that the elements $x$ and $y$ 
are not equal, then we will replace the term "$\backsim$-nontrivial" with 
\emph{"equational-nontrivial"}.} \end{defin}
 
\begin{thm}\label{Teqntr} Let $\mathcal{E}$, $\mathfrak{K}$, and $\mathcal{T}$ 
accordingly be a $\backsim$-nontrivial system, class, and theory of the 
signature $\sigma$, in particular, they may be equational-nontrivial. Then \ 
there is an algorithm such that for every program $P$ and any input string 
$X$, builds the sentence $\Omega^{(T)}(X,P)$ of the signature $\sigma$, 
where $T\!\in\!\{Th(\mathcal{E}),Th(\mathfrak{K}),\mathcal{T}\}$; \  
this formula possesses the properties (i) and (ii) of the word $S(P,X)$ from 
the statement of  Proposition~\ref{comRecL}, where $\mathcal{L}\!=\!T$; 
$F(|X|)\!=\!\exp(|X|)$. Moreover, for each $\varepsilon\!>\!0$, there exists a 
constant $E_{T,\sigma}$ such that the inequality \ $|\Omega^{(T)}(X,P)|
\leqslant\!E_{T,\sigma}\cdot|P|\cdot|X|^{2+\varepsilon}$ \ holds for any 
long enough $X$. \end{thm}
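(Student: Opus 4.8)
## Proof Proposal

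The plan is to reduce $Th(\mathcal{B})$ — for which Theorem~\ref{main} already supplies the sentences $\Omega(X,P)$ with all required properties — to each of the $\backsim$-nontrivial theories $T \in \{Th(\mathcal{E}), Th(\mathfrak{K}), \mathcal{T}\}$ by a \emph{uniform syntactic translation} $\Omega \mapsto \Omega^{(T)}$ that interprets the two-element Boolean algebra inside $\sigma$-structures. First I would handle the purely equational case and then treat the general $\backsim$-nontrivial case by relativizing to $\backsim$-classes. The key observation is that the two elements $0,1$ of $\mathcal{B}$ can be represented by two free variables $x_0, x_1$ governed by the hypothesis that they are non-equal (via $N(x_0,x_1)$, or literally $x_0 \not\approx x_1$ in the pure-equality case): under this constraint a quantified Boolean formula over the domain $\{x_0,x_1\}$ is valid in $\mathcal{B}$ iff its relativized translation holds in every ($\backsim$-nontrivial) model.

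The construction proceeds in the following steps. \emph{Step 1 (fixing the two witnesses).} Introduce two fresh individual variables, say $e_0$ and $e_1$, which will play the roles of the constants $0$ and $1$. In the equational case prepend the hypothesis $N(e_0,e_1)$ (this is either $e_0\not\approx e_1$ or the given formula expressing "not equal"); in the bare $\backsim$-nontrivial case prepend $\neg(e_0 \backsim e_1)$. \emph{Step 2 (translating terms and atoms).} Recursively translate every Boolean term of $\mathcal{B}$ into a $\sigma$-formula with a distinguished output variable: the constant $0$ becomes "equal (or $\backsim$-equivalent) to $e_0$", the constant $1$ likewise with $e_1$, and $\cap,\cup,C$ are encoded by their truth tables relative to $\{e_0,e_1\}$ — e.g. $C(x)$ outputs $e_1$ when $x\backsim e_0$ and $e_0$ otherwise, and similarly a bounded case split for the binary operations. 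Each Boolean equation $s \approx t$ translates into a $\sigma$-formula of length linear (up to a constant depending only on $\sigma$) in $|s|+|t|$. \emph{Step 3 (relativizing quantifiers).} Replace every $\forall z\,(\cdots)$ in $\Omega(X,P)$ by $\forall z\,\bigl[(z\backsim e_0 \vee z\backsim e_1) \rightarrow \cdots\bigr]$ and every $\exists z\,(\cdots)$ by $\exists z\,\bigl[(z\backsim e_0 \vee z\backsim e_1) \wedge \cdots\bigr]$, so that quantification in the translated sentence effectively ranges over (representatives of) the two designated $\backsim$-classes. \emph{Step 4 (closing up).} Set $\Omega^{(T)}(X,P) \rightleftharpoons \forall e_0 \forall e_1\, \bigl[N(e_0,e_1) \rightarrow \Omega(X,P)^{\mathrm{rel}}\bigr]$ in the $\backsim$-nontrivial case, and the analogous thing with $\neg(e_0\backsim e_1)$ in general.

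It then remains to verify properties (i), (ii) and the length bound. Property (ii) is the semantic heart: one must show $\Omega^{(T)}(X,P) \in T$ iff $P$ accepts $X$ within $\exp(|X|)$ steps. For the forward direction, any $\backsim$-nontrivial model $\mathcal{M}$ of $T$ contains $a,b$ with $\neg(a\backsim b)$; instantiating $e_0:=a, e_1:=b$ and quotienting by $\backsim$ on $\{a,b\}$ yields a structure isomorphic to $\mathcal{B}$ on which the relativized formula reads off exactly $\Omega(X,P)$, so Theorem~\ref{main}(ii) applies. Conversely, if $\Omega(X,P)$ is false in $\mathcal{B}$, one embeds $\mathcal{B}$ into a $\backsim$-nontrivial model of $T$ (using that such a model $\mathcal{E}$ exists by hypothesis, and that $T$-consistency of $\exists e_0 e_1\, N(e_0,e_1)$ gives a model with two distinguished classes) to refute $\Omega^{(T)}(X,P)$. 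Property (i) and the length estimate are routine: Steps 1–4 are a local, constant-blowup rewriting (the constant absorbed into $E_{T,\sigma}$ depending only on $|\sigma|$ and the fixed formula $N$), so $|\Omega^{(T)}(X,P)| \leqslant E_{T,\sigma}\cdot |\Omega(X,P)| \leqslant E_{T,\sigma}\cdot D_{\varepsilon}\cdot |P|\cdot |X|^{2+\varepsilon}$ by Corollary~\ref{lOm2}, and the translation is clearly computable in polynomial time, composing with the algorithm of Corollary~\ref{tOm}.

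The main obstacle I anticipate is \emph{Step 3 combined with the $\backsim$ (as opposed to $=$) case}: when the equivalence relation is coarser than equality, a model may have many elements in each of the two distinguished classes, and one must be careful that the relativized quantifiers, the term-translation case splits (which test $z\backsim e_0$ versus $z\backsim e_1$), and the encoding of Boolean equality by $\backsim$ all cohere — in particular that distinct representatives of the same class are genuinely interchangeable in every translated subformula. This amounts to checking a substitution/congruence lemma: the truth value of the translated formula depends only on the $\backsim$-classes of the witnesses, which holds because every atomic subformula we introduce is built solely from $\backsim$ applied to quantified variables and to $e_0,e_1$. Once this congruence lemma is in hand, the passage to the quotient in the proof of (ii) is immediate, and the rest is bookkeeping.
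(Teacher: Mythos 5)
Your proposal is correct and follows the same overall strategy as the paper's proof: translate the sentence $\Omega(X,P)$ supplied by Theorem~\ref{main} into the signature $\sigma$ by letting two non-$\backsim$-equivalent witnesses play the roles of $0$ and $1$, relativize all quantifiers to their two classes, replace $\approx$ by $\backsim$ (or by $\neg N$), close over the witnesses, and then compose with Corollaries~\ref{lOm2} and~\ref{tOm}; the semantic equivalence rests on exactly the congruence observation you state. The genuine divergence is in how the Boolean operations $\cap,\cup,C$ are eliminated and how the length bound is secured. The paper rewrites the atomic formulae in place (its stages a)--c)), and it explicitly warns that this rewriting can grow non-linearly on deeply nested terms; it then rescues the constant-factor bound by inspecting the actual atoms of $\Omega(X,P)$ (the $\oplus$-expressions coming from binary increment and decrement) and noting that only three rewriting passes are ever needed. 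Your translation ``with a distinguished output variable'' sidesteps that inspection and is uniform in the input sentence, but the claimed linear overhead per equation $s\approx t$ is only valid if the output variables of subterms are genuinely introduced as (relativized) existentially quantified auxiliaries, one per subterm, so that each subterm's defining formula occurs once; if instead the truth-table case splits substitute the subterm translations directly, subterms are duplicated and the length can grow super-linearly in the nesting depth --- precisely the pitfall the paper flags. So either make the quantified output variables explicit (noting that the new atoms are still $\backsim$-atoms, so your congruence lemma survives), or fall back on the paper's observation about the shallow term structure of $\Omega(X,P)$. Two smaller remarks: the paper treats separately the case where $\sigma$ has constants $c_0,c_1$, uses the existential closure $\exists a,b\,[N(a,b)\wedge\varphi^{N}_{a,b}]$ for $Th(\mathcal{E})$ and the universal form $\forall a,b\,(N(a,b)\to\varphi^{N}_{a,b})$ for $Th(\mathfrak{K})$ and $\mathcal{T}$; your uniform universal closure is the right choice for the theory case (it is vacuously true in models with no non-equivalent pair) and coincides with the existential form on a single nontrivial $\mathcal{E}$ by your congruence lemma. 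Finally, in your converse direction nothing is ``embedded'': one simply evaluates the translated sentence at a non-equivalent pair of the hypothesized nontrivial model $\mathcal{E}$ and sees that it is false there, which is what your quotient argument already gives.
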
  

\begin{proof} At first, for given $X$ and $P$, we write a simulating sentence 
$\Omega(X,P)$ of the theory of the Boolean algebra $\mathcal{B}$ in the 
signature $\langle\cap,\cup,C,0,1\rangle$ with the equality symbol $\approx$, 
applying Theorem~\ref{main}. Then, we will transform it in the required formulae 
$\Omega^{(T)}(X,P)$ within polynomial time.%

For the sake of simplicity of denotations, we assume that the $\backsim$-
nontrivial structure $\mathcal{E}$ is a model for the theory $\mathcal{T}$, 
belongs to the class $\mathfrak{K}$, and has the signature $\sigma$.%

Let $\varphi$ be a sentence of Boolean signature. We will construct the closed 
formulae $\varphi^{(2.j)}$ so that \ $\mathcal{B}\!\models\!\varphi
\Leftrightarrow\mathcal{E}\!\models\!\varphi^{(2.j)}$, where $j$ can be 0,1, 
or 2 depending on the signature $\sigma$. 

\underline{Case 0.} The signature $\sigma$ contains the equivalence symbol 
$\backsim$ and the two constant symbols $c_{0}$ and $c_{1}$ such that 
$\mathcal{E}\!\models\!\neg c_{0}\!\backsim\!c_{1}$. In the first stage, we 
accordingly replace each occurrence of the subformulae of the kind \ $\exists 
y\psi$; $\forall x\psi$; $t\!\approx\!u$ \ with the formulae $\exists y((y\!
\backsim\!c_{0}\vee y\!\backsim\!c_{1})\wedge\psi)$; $\forall x((x\!\backsim\!
c_0\vee x\!\backsim\!c_1)\!\rightarrow\!\psi)$; $t\!\backsim\!u$, where $t$ 
and $u$ are the terms. 

We carry out the second stage's transformations during several passages until 
the formula ceases to change. In this stage, \ a) we replace the subformulae of 
the kind $C(t)\!\backsim\!u$ and $t\!\backsim\!C(u)$ with the formula $\neg\, 
t\!\backsim\!u$. \ If a term $u$ is not the constant 0 or 1, then we replace: \ 
b) the subformulae of the kind \ $t_1\!\cup\!t_2\!\cup\!\ldots\!\cup\!t_s
\backsim\!u$ and $u\backsim\!t\!\cup\!t_1\!\cup\!t_2\!\cup\!\ldots\!\cup\!t_s$ 
\ with the formula \ $[(t_1\!\backsim\!c_{1}\vee t_2\!\backsim\!c_{1}\vee
\ldots\vee t_s\!\backsim\!c_{1})\!\rightarrow\!u\!\backsim\!c_{1}]
\wedge[(t_1\!\backsim\!c_{0}\wedge t_2\!\backsim\!c_{0}\wedge\ldots\wedge t_s
\!\backsim\!c_{0})\rightarrow u\!\backsim\!c_{0}]$; and \ c) the subformulae \ 
$t_1\!\cap\!t_2\!\cap\ldots\!\cap\!t_s\!\backsim\!u$ and $u\!\backsim\!t_1\!
\cap\!t_2\!\cap\ldots\!\cap\!t_s$ \ with the formula \ $[(t_1\!\backsim\!
c_{0}\vee t_2\!\backsim\!c_{0}\vee\ldots\vee t_s\!\backsim\!c_{0})\!
\rightarrow\!u\!\backsim\!c_{0}]\wedge[(t_1\!\backsim\!c_{1}\wedge t_2\!
\backsim\!c_{1}\wedge\ldots\wedge t_s\!\backsim\!c_{1})\rightarrow u\!\backsim 
\!c_{1}]$.We complete the second stage by replacing the constants 0 and 1 with 
the constants $c_0$ and $c_1$, respectively. %

After the second stage, the resulting record can contain symbols $\cup,\ \cap$ 
of the signature of Boolean algebras. In the third stage, we replace 
accordingly each occurrence of the subformulae of the kind \ $t_1\!\cup t_2
\cup\ldots\!\cup t_s\!\backsim\!c_{1}$, \ $t_1\!\cap\!t_2\!\cap\ldots\!\cap
\!t_s\!\backsim\!c_{1}$, \ $t_1\!\cup\!t_2\!\cup\!\ldots\!\cup\!t_s\!\backsim
\!c_{0}$, \ $t_1\!\cap\!t_2\!\cap\ldots\!\cap\!t_s\!\backsim\!c_{0}$ \ with 
the formulae \ $t_1\!\backsim\!c_{1}\vee t_2\!\backsim\!c_{1}\vee\ldots\vee 
t_s\!\backsim\!c_{1}$, \ $t_1\!\backsim\!c_{1}\wedge t_2\!\backsim\!c_{1}
\wedge\ldots\wedge t_s\!\backsim\!c_{1}$, \ $t_1\!\backsim\!c_{0}\wedge t_2\!
\backsim\!c_{0}\wedge\ldots\wedge t_s\!\backsim\!c_{0}$, \ 
$t_1\!\backsim\!c_{0}\vee t_2\!\backsim\!c_{0}\vee\ldots\vee t_s\!\backsim\!
c_{0}$.

We execute these transformations as long as the record contains at least one 
symbol of the signature of Boolean algebras. The number of such symbols is 
decreased at least by one on every passage for the second and third stages, and 
the first stage can be realized on the only passage. So we need at most $n$ 
passages, where $n$ is a length of the sentence $\varphi$. The length of the 
whole record grows linearly on each pass, since the transformation of the kind 
b) or c) of the second stage is longest, but it increases the length no more 
than five times (for $s=2$).  

Nevertheless, the length of the resulting record \ $\varphi^{(2.0)}
\rightleftharpoons\varphi^{\backsim}_{c_0,c_1}$ can increase non-linearly in 
common case. For instance, if $\varphi$ contains an atomic formula of the kind 
$$\bigcup_{i}\Bigl\{\bigcap_{j}\Bigl[\bigcup_{k}\Bigl(\ldots\Bigr)\Big]
\Big\}\approx u,$$ 
where the number of the alternations of the "big" conjunctions and 
disjunctions depends on $n$. 
  
However, there are not such subformulae in the sentence $\Omega(X,P)$  
simulating for the theory of algebra $\mathcal{B}$. Indeed, under denotation of 
Subsections \ref{s3.3} and \ref{s4.2}, the conversion of the subformulae of the kind 
\ $\widehat{x}_{t}\!\approx\!\widehat{u}(\beta)$ (this is the system of 
equalities) and $\neg\,\widehat{w}\!\approx\!\widehat{u}(\beta)$ (this is the 
disjunction of inequalities) make the most increase if $\beta\!\in\!\{R,L\}$,
because they comprise the atomic formulae of the form \ $x_{t,j}\!\approx\!
u_{j}\oplus u_{j+1}^{\beta}\cdot\ldots\cdot u^{\beta}_{n}$ \ and \ $\neg\,w_{j}
\!\approx\!u_{j}\oplus u_{j+1}^{\beta}\cdot\ldots\cdot u^{\beta}_{n}$, where 
$u_{k}^{\beta}$ is either $u_k$ for $\beta\!=\!R$ or $Cu_{k}$ for $\beta\!=\!L
$ . We recall that these subformulae are \ 
$x_{t,j}\!\approx\![u_{j}\cap C(u_{j+1}^{\beta}\cap\ldots\cap u^{\beta}_{n})]
\cup[Cu_{j}\cap u_{j+1}^{\beta}\cap\ldots\cap u^{\beta}_{n}]$ \ and \ $\neg\, 
w_{j}\!\approx\![u_{j}\cap C(u_{j+1}^{\beta}\cap\ldots\cap u^{\beta}_{n})]
\cup [Cu_{j}\cap u_{j+1}^{\beta}\cap\ldots\cap u^{\beta}_{n}]$ by our 
denotation. So, we need to perform only the three transformations of the kind 
b) and c) in order to convert the sentence $\Omega(X,P)$ into 
$\Omega(X,P)^{(2.0)}$. Therefore the estimation \ $|\Omega(X,P)^{(2.0)}|
\leqslant D_0|\Omega(X,P)|$ is valid for appropriate constant $D_0$. Since one 
can execute every passage of any stage within $\mathcal{O}(|\Omega(X,P)|^{2})$ 
steps, the entire transformation takes the polynomial time.

\underline{Case 1.} The signature of the structure $\mathcal{E}$ has not two various 
constant symbols. We replace the constants $c_0$ and $c_1$ in the formula   
$\varphi^{(2.0)}\rightleftharpoons\varphi^{\backsim}_{c_0,c_1}$ with the new 
variables $a$ and $b$, respectively. We obtain the formula $\varphi^{\backsim}
_{a,b} $, and write additionally the prefix after that: \ $\varphi^{(2.1)}
\rightleftharpoons\exists a,b[\neg\,a\!\backsim\!b \ \& \ \varphi^{\backsim}
_{a,b}]$. It is clear that $|\varphi^{(2.1)}|\leqslant2|\varphi^{(2.0)}|$ for 
$|\varphi^{(2.0})|\geqslant11$, and so \ $|\Omega(X,P)^{(2.1)}|\leqslant 
D_1|\Omega(X,P)|$ for appropriate constant $D_1$. 

\underline{Case 2.} The signature $\sigma$ does not contain the equivalence 
symbol, but there exists a formula $N(x,y)$, which asserts that the elements $x$ 
and $y$ is not equal. We replace every occurrence of the atomic subformula 
of the kind $t\!\backsim\!s$ in the $\varphi_{a,b}^{\backsim}$ with the 
formula $\neg N(t,s)$ and add the prefix: \ $\varphi^{(2.2)}\rightleftharpoons
\exists a,b[N(a,b) \ \& \ \varphi_{a,b}^{N}]$. It is obvious that 
$|\varphi^{(2.2)}|\leqslant|N(x,y)|\cdot|\varphi^{2.1}|$, hence \ 
$|\Omega(X,P)^{(2.2)}|\leqslant D_2|\Omega(X,P)|$ for some constant $D_2$. 

One can easily prove by induction on the complexity of the formulae that the 
condition $\mathcal{B}\models\varphi$ is equally matched to one of the
following conditions (depending on the signature $\sigma$): either 
$\mathcal{E}\models\varphi^{(2.0)}$, or \ $\mathcal{E}\models\varphi^{(2.1)}$, 
or $\mathcal{E}\!\models\!\varphi^{(2.2)}$. It is clear that if  
$\mathfrak{K}$ and $\mathcal{T}$ are the equational-nontrivial class and  
theory respectively, then the condition $\mathcal{B}\models\varphi$ is also 
equally matched to the conditions $Th(\mathfrak{K})\,\vdash\forall a,b (N(a,b)
\!\rightarrow\!\varphi^{N}_{a,b})$ and $\mathcal{T}
\vdash\!\forall a,b(N(a,b)\!\rightarrow\!\varphi^{N}_{a,b})$.\end{proof}

\begin{cor}\label{comRecEqNtr} The recognition complexity of each  
$\backsim$-nontrivial decidable theory $\mathcal{T}$, in particular, 
equational-nontrivial, has the non-polynomial lower bound, more precisely 
$$\mathcal{T}\notin DTIME(\exp(D_{T,\sigma}\cdot n^{\delta})), \qquad
\textrm{where} \ \ \delta\!=\!(2+\varepsilon)^{-1}, \ \ D_{T,\sigma}\!=\!
(E_{T,\sigma})^{-\delta}.$$ \end{cor}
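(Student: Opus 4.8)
The plan is to read the corollary off from Theorem~\ref{Teqntr} together with the generalized Fischer--Rabin machinery of Section~\ref{s2}, applied with $\mathcal{L}\!=\!\mathcal{T}$, $F(n)\!=\!\exp(n)$, and $S(P,X)\!=\!\Omega^{(T)}(X,P)$. First I would record that $F\!=\!\exp_1$ is a limit upper bound for all polynomials, which is exactly the example noted right after Definition~\ref{dLUBP}, so that Proposition~\ref{mainPr} and its Corollary~\ref{comRecL} are applicable to this choice of $F$.

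Next I would check the three hypotheses of Proposition~\ref{mainPr} for $S(P,X)\!=\!\Omega^{(T)}(X,P)$. Properties (i) (a code for $\Omega^{(T)}(X,P)$ is constructible in time polynomial in $|X|\!+\!|c_OP|$) and (ii) ($\Omega^{(T)}(X,P)\!\in\!\mathcal{T}$ iff $P$ accepts $X$ within $\exp(|X|)$ steps) are precisely the two properties asserted in the statement of Theorem~\ref{Teqntr}. For property (iii) we are in case~(a) with $b\!=\!1$ and $s\!=\!2\!+\!\varepsilon$: the bound $|\Omega^{(T)}(X,P)|\!\leqslant\!E_{T,\sigma}\!\cdot\!|P|\!\cdot\!|X|^{2+\varepsilon}$ from Theorem~\ref{Teqntr}, combined with the linear relation $|M|\!\leqslant\!|c_OM|\!\leqslant\!l(|M|)$ of Subsection~\ref{s3.2} and the trivial $|P|\!\leqslant\!|c_OP|$, gives $|c_O\Omega^{(T)}(X,P)|\!\leqslant\!D\!\cdot\!|c_OP|\!\cdot\!|X|^{2+\varepsilon}$ for a constant $D$ that differs from $E_{T,\sigma}$ only by the (encoding-dependent) slope of $l$; the left inequality $|X|\!\leqslant\!|c_O\Omega^{(T)}(X,P)|$ holds because the description of the initial configuration occurring inside $\Omega^{(T)}(X,P)$ already contains at least one clause per symbol of $X$, so $|X|\!<\!|\Omega^{(T)}(X,P)|\!\leqslant\!|c_O\Omega^{(T)}(X,P)|$. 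Both inequalities are needed only for all sufficiently long $X$, which matches the qualifier in Theorem~\ref{Teqntr}; the finitely many short exceptions are harmless, exactly as in Corollary~\ref{lOm2}.

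It then remains to invoke Corollary~\ref{comRecL}, which in case~(a) gives $\mathcal{T}\!\notin\!DTIME(F(D^{-\zeta}\cdot n^{\zeta}))$ with $\zeta\!=\!s^{-1}\!=\!(2\!+\!\varepsilon)^{-1}\!=\!\delta$. Since $F\!=\!\exp$ and, after folding the bounded coding factor into the constant (which is legitimate, since the constants in Proposition~\ref{mainPr}(iii) and Corollary~\ref{comRecL} are already allowed to depend on the applied encoding), $D^{-\delta}\!=\!(E_{T,\sigma})^{-\delta}\!=\!D_{T,\sigma}$, we obtain $\mathcal{T}\!\notin\!DTIME(\exp(D_{T,\sigma}\cdot n^{\delta}))$, as claimed. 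Finally, because this holds for every $\varepsilon\!>\!0$ and $\delta\!=\!(2\!+\!\varepsilon)^{-1}\!>\!0$, the bounding function $\exp(D_{T,\sigma}n^{\delta})$ outgrows every polynomial, so the lower bound is non-polynomial and in particular $\mathcal{T}\!\notin\!\mathbf{P}$; specializing $\backsim$ to equality, or $\mathfrak{K}$ to the class of a single equivalence relation, yields the statements for the theory of pure equality and for the theory of one equivalence relation, since these are equational-nontrivial. The only genuinely delicate point is the bookkeeping of constants between $|\Omega^{(T)}(X,P)|$ and $|c_O\Omega^{(T)}(X,P)|$ and between the $D$ of Proposition~\ref{mainPr}(iii) and $D_{T,\sigma}$; everything else is a direct citation of Theorem~\ref{Teqntr}, Corollary~\ref{comRecL}, and the LUBP example following Definition~\ref{dLUBP}.
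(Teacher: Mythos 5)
Your proposal is correct and follows the same route as the paper: the paper's proof is exactly "invoke Theorem~\ref{Teqntr} to supply properties (i), (ii) and the length bound (case (a) with $s=2+\varepsilon$), then apply Corollary~\ref{comRecL} with $F=\exp$," which is what you do, just with the constant/encoding bookkeeping spelled out explicitly. The paper records this as an immediate consequence, so your extra care about $|c_O\Omega^{(T)}(X,P)|$ versus $|\Omega^{(T)}(X,P)|$ is simply a more detailed rendering of the same argument.
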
%

\begin{proof} It immediately follows from the theorem and Corollary~\ref{comRecL}.
\end{proof}

\section{Results and Discussion}\label{s8}

Let us notice that nearly all of the decidable theories mentioned in the 
surveys \cite{ELTT,Rab} are nontrivial regarding equality or equivalence. 
So, if we regard "the polynomial algorithm" as a synonym for "the fast-acting 
algorithm", then the quickly decidable theories are almost completely absent. 
Furthermore, the examples given in the introduction and \cite{ComHen,Fish-Rab, 
FlMaSie},\cite{Mey73}--\cite{St74},\cite{Vor} show that the complexity of the 
recognition procedures can be perfectly enormous for many natural, and 
seemingly, relatively simple theories.

It seems plausible that the estimation obtained in Corollary \ref{comRecB} is 
precise enough. One can substantiate this assertion, if firstly, to find the 
upper bound on the recognition complexity of theory $Th(\mathcal{B})$ by the 
multi-tape Turing machines; secondly, to obtain the lower bound for this 
complexity for the same machines. The author suspects that the inequalities of 
Item (iii) of the main theorem are valid as well for the $k$-tape machines, but 
the constant $D$ must be about $k$ times bigger at that. 

Let us point out that the number of the alternation of quantifiers depends on the 
input length in the modeling formulae $\Omega(X,P)$. Therefore the set of these 
sentences does not belong to any class of the polynomial hierarchy. However, if one 
can build the short formulae $\Omega(X,P)$ belonging to $\Delta_{k}^{\mathbf{P}}$ 
for some $k\!>\!1$, then the class $\mathbf{P}$ will be different from this 
$\Delta_{k}^{P}$, hence $\mathbf{P}$ will be not equal to $\mathbf{NP}$ \cite{M-S}.

\subsection{The totality and locality of the simulating methods}\label{s8.1}
The method of Cook's formulae has arisen for the modeling of the 
nondeterministic Turing machine actions within polynomial time, and the 
construction of Stockmeyer and Meyer is also applicable for the same simulation 
in polynomial space, provided that the running time of the machine is 
exponential. This is a significant advantage of these techniques. 

However, our method of modeling by means of formulae is ineligible for 
nondeterministic machines. More precisely, such modeling formulae must be 
exponential in length, when the machine runs in exponential space within the 
exponential time. Unfortunately, the corresponding example is too cumbersome for 
this paper. This example rests on the simple fact that if we set the values of 
the basic color $t$ variables, then we can "see" only at most two the tape 
squares ($\widehat{x}_t$th and maybe $\widehat{z}_t$th) when we are situated 
within the framework of our approach --- see the proof of Proposition \ref{Pr2}. 
So our simulating method is {\it strictly local, pointwise}. At the same time, 
the techniques of Cook and Stockmeyer and Meyer are {\it total}, since they allow 
us to "see" the values of all of the tape cells simultaneously at any instant, 
if we transform the formulae from \cite{Ck} and \cite{S-M} to the "unfolded" 
tantamount form. 

The author is sure that the technique of the direct encoding of machines 
continues to be a potent tool for investigating the computational complexity 
of theories, despite the emergence of other powerful approaches for obtaining 
the lower bounds on this complexity such as the Compton and Henson method 
\cite{ComHen} or the method  of the bounded  concatenations of  Fleischmann, 
Mahr, and  Siefkes \cite{FlMaSie}.  

Nevertheless, the author agrees with the opinion that the coding of the 
machine computations into the models of the theory being studied is a very 
difficult task in many cases. Such coding is partly like the modeling of 
the machine actions with the aid the defining relations, when one wants to 
prove the insolubility of some algorithmic problem for the finitely presented
algebraical structures of given variety (see, for instance, \cite{BaumsGS,Lat}).

In both cases, we have strong restrictions, which are dictated by the necessity 
to be within the framework of the given signature or variety. 
But the case of the algorithmic problem for the finitely presented structures 
is, perhaps, somewhat easier than the simulation of computations by means of the 
formulae of a certain theory. In the first case, we can apply the suitable 
words consisting of the generators of the algebraical system for the 
description of tape configurations or their parts. The value of these words can 
change depending on the defining relations and the identity of variety. However, 
these changes have the local character relatively of the entire structure; 
whereas the variables can take on any values inside the system when we make a 
simulation in the second case.   

The task becomes slightly easier if there are some constants in the theory 
signature. Just for this reason, we work with the Boolean algebra having two 
elements, but not with the language $TQBF$ consisting of the true quantified 
Boolean formulae.

Note also, that the actions simulation of the computational mechanisms, which 
is realized in \cite{BaumsGS} and \cite{Lat} (these devices are the Minsky 
machines in the former, and they are the Minsky operators algorithms in the 
latter), is total. On the other hand, this modeling is somewhat like the 
Compton and Henson method too. Indeed, in all of these cases, the coding of 
computations is done once and for all. In \cite{ComHen}, this is made for 
Turing machines in proving the inseparability results; then, the authors 
transfer the obtained lower bounds from one theory to another, using 
interpretations. Both in \cite{BaumsGS} and in \cite{Lat}, such simulation is 
made in proving the insolubility of the words problem for the appropriate 
module over a certain integral domain; afterward, this module is 
(accordingly isomorphically and homomorphically) embedded in the 
solvable group under construction.

\subsection{Entirely simultaneous and \\ conventionally consistent modeling}\label{s8.2}
Let us investigate the question of the similarity and difference of the modeling 
formulae in the Stockmeyer and Meyer method and those that are described
in this paper. %

On the one hand, it may regard that the formula $\Omega^{(0)}(X,P)(\widehat{y}
_{t},\widehat{y}_{t+1})$ is the analog of the Cook's method formula $A_{0,m}
(\widetilde{U},\widetilde{V})$, which was applied in the proof of Theorem 4.3 in 
\cite{S-M}, here $\widetilde{U}$ and $\widetilde{V}$ are the sequences $(u_1,
\ldots,u_m)$ and $(v_1,\ldots,v_m)$ of the Boolean variables and $m\!=\!q(|X|)$ is 
the value of suitable polynomial $q$ on the length of input $X$; this $m$ and our 
$P$ are $n$ and $\mathfrak{M}$ in \cite{S-M}. Indeed, it says in \cite{S-M} that    
the formula $A_{0,m}(\widetilde{U},\widetilde{V})$ is satisfiable if and only if 
the configuration encoded by the formula $v_1\ldots v_m$ follows from the 
configuration that corresponds to $u_1\ldots u_m$ in, at most, one step of $P$.

So, the formula $A_{0,m}(\widetilde{U},\widetilde{V})$ can be considered as 
the conjunction of the subformulae $u_1\ldots u_m$, $v_1\ldots v_m$, which 
describe the adjacent configurations, and also of the subformula that describes 
the transformation from the former to the latter. One can regard that this 
transfer formula has the kind \ $B_t\&C_t\&D_t\&F_t\&G_t\&H_t$, where \ 
$B_t,C_t,D_t,F_{t},G_t,$, and $H_{t}$ \ are the subformulae of the formulae \
$B,C,D,F,G$, and $H$ respectively from the proof of Theorem 1 in \cite{Ck} (see 
also the proof of Theorem 10.3 in \cite{AHU}) and are obtained from 
them by means the restriction of the last formulae on the fixed value of the 
parameter $t$ designating the step number. 

However, the author more likes the following standpoint. The correct analog of 
the formula $A_{0,m}(\widetilde{U},\widetilde{V})$ is $\Phi^{(0)}(P)(\widehat{y}
_{t},\widehat{y}_{t+1})$. In other words, the former equals to \ $A'_{0,m}
\rightleftharpoons B_t\&C_t\&$ $\&D_t\&F_t\&G_{t}\&H_{t}$, i.e., this formula 
simply describes the regulations of the transformation of one configuration to 
another, but does not contain the descriptions of these configurations \ 
$u_1\ldots u_m$ and  $v_1\ldots v_m$. These descriptions can only be in the 
implicit form anyway, because the intermediate configurations on the tape are 
unknown for us, we can know only the initial configuration and the fragment of the 
terminal one. 

One can easily prove by induction that if 
$$B_{s,m}(\widetilde{U},\widetilde{V})\rightleftharpoons u_1\ldots u_m \ \& 
\ A'_{s,m} \ \& \ v_1\ldots v_m,$$ then $\exists\widetilde{U}\exists\widetilde{V}
B_{s,m}(\widetilde{U},\widetilde{V})$ is true if and only if the configuration 
encoded by $v_1\ldots v_m$ follows from the configuration that corresponds to 
$u_1\ldots u_m$ in, at most, $\exp(s)$ steps of $P$. So this 
$B_{s,m}(\widetilde{U},\widetilde{V})$ is the simulating formula in \cite{S-M}, 
and it is analog of our $\Omega^{(s)}(X,P)(\widehat{y}_{t},\widehat{y}_{t+e(s)})
$. It is quite clear that if $B_{s,m}(\widetilde{U},\widetilde{V})$ is 
satisfiable, then $A'_{s,m}(\widetilde{U},\widetilde{V})$ is the same. In addition, 
the converse is also true for $s\!=\! 0$. This is easily seen from the description 
of formulae $B,C,D,F,G$, and $H$ given in \cite{Ck} (see also the description of 
their analogs $A,B,C,D,E$ in \cite{AHU}), since these 
formulae contain all components of the tuples $\widetilde{U}$ and $\widetilde{V}$.

In both this cases, the simulating formulae $A_{0,m}$ and $B_{0,m}$ have the form 
 $$configuration(t)\ \&\ configuration(t+1)\ \& \ step(t+1),$$
therefore one can say that the {\it entirely simultaneous modeling} of actions has 
been applied in \cite{Ck,S-M}.

But the formula $\Omega^{(0)}(X,P)(\widehat{y}_{t},\widehat{y}_{t+1})$ is 
constructed in another way. It asserts that if the descriptions of the $t$th 
step's configuration (the formula $\Psi K(t)(\widehat{y}_{t})$) and of the 
step $t+1$ ($\Phi^{(0)}(P)(\widehat{y}_{t},\widehat{y}_{t+1})$) are 
correct, then the configuration, which appeared after this step, will be 
adequately described as well (by $\Psi K(t\!+\!1)(\widehat{y}_{t+1})$). We 
call this approach as {\it conventionally consistent modeling} of actions, 
i.e., $\Omega^{(0)}(X,P)$ has such structure: 
$$configuration(t)\ \&\ step(t+1) \ \to \ configuration(t+1).$$ 
  
Thus, the designs of the formulae $\Omega^{(0)}(X,P)(\widehat{y}_{t},\widehat{y}
_{t+1})$ and $A_{0,m}(\widetilde{U},\widetilde{V})$ are essentially different, 
if even one does not take into consideration the presence of the inner 
quantifiers in the former. Furthermore, their free variables "demand" the 
quantifiers of the various kind forr the formulae to become true.

Let us notice that the conventionally consistent modeling is also used in 
\cite{BaumsGS} and \cite{Lat}. Recall in this connection that the investigation of 
the finitely presented algebraic system, which is given with the aid of the 
generators $g_1,\ldots,g_k$ and the defining relations \ 
$R_1(g_1,\ldots,g_k),\ldots,$ $R_m(g_1,\ldots,g_k)$, is equivalent (in many  
respects) to the study of the formulae of the kind 
$$\forall g_1\ldots\forall g_k[(R_1(g_1,\ldots,g_k)\&\ldots\&R_m(g_1,
\ldots,g_k))\rightarrow S(g_1,\ldots,g_k)].$$  
In addition, we saw in Proposition \ref{Pr2}(i) that $\Omega^{(0)}(X,P)$ can 
simultaneously model too, but existential quantifiers are applied at that.
 
\subsection{Open problems} 
It is well known that the theory of two equivalence relations is not decidable, 
but the theory of one such relation $\sim$ is decidable \cite{ELTT}. 
Now it turns out according to Corollary \ref{comRecEqNtr} that although it is 
decidable, but for a very long time.

What will happen if we add some unary predicates or functions to the signature 
with the only equivalence symbol $\sim$ so that the resulting theory remains 
decidable? Will it be possible to find such functions and/or predicates in order  
that the recognition complexity "smoothly" increases? We can formulate this in a 
more precise way.

\begin{prm} Let $\sigma_0,\sigma_1,\ldots$ be a sequence of signatures such that 
$\sigma_0\!\supseteq\!\{\sim\}$ or $\sigma_0\!\supseteq\!\{\approx\}$ and 
$\sigma_i\subset\sigma_{i+1}$ for each natural $i$. Does there exist a sequence 
of the algebraical structures $\mathfrak{M}_0,\mathfrak{M}_1,\ldots$ such that 
their signature accordingly are $\sigma_0,\sigma_1,\ldots$ and $$Th(\mathfrak{M}
_j)\!\in DTIME(\exp_{j+2}(n)) \ \setminus \ DTIME(\exp_{j+1}(n))?$$ \end{prm}

Recall that $Th(\mathfrak{M})$ denotes the first-order theory of the system 
$\mathfrak{M}$. It is possible that there already is a candidate for the like 
sequence of the higher-order theories with the "smoothly" increasing  
recognition complexity.

\begin{prm} Let $\Omega^{(k)}$ be a fragment of the type theory $\Omega$ from 
\cite{Vor}, this fragment is obtained with the aid of the restriction of the types 
of variables by level $k$. Can one point out, for each natural $k$, such a number 
$s$ that \ $\Omega^{(k)}\!\in DTIME(\exp_{k+s+1}(n)) \ \setminus \ DTIME(\exp_{k+s}
(n))$? \end{prm}

\begin{prm} It seems quite plausible that the theory of finite Boolean algebras 
has a double exponential as the lower bound on the complexity of recognition.
\end{prm}

\begin{prm} Let $F(n)$ be a limit upper bound for all polynomials (see 
Definition \ref{dLUBP}).
What algebraic and/or model-theoretic properties must be possessed 
an algebraical structure $\mathfrak{A}$ so that \ $Th(\mathfrak{A})\!\in\!
DTIME(F(n))$ or  $Th(\mathfrak{A})\!\notin\!DTIME(F(n))$ holds?\end{prm} 

\bibliography{mybibfile}

\end{document}